\pgfplotsset{compat=1.13}
\newtheorem{assumption}{Assumption}
\let\la\langle
\let\ra\rangle
\newcommand{\bl}{\mathcal{B}_L}
\newcommand{\blstrict}{\underline{\mathcal{B}_L}}
\newcommand{\blnonsmooth}{\overline{\mathcal{B}_L}}
\newcommand{\overf}{\overline{f}}
\newcommand{\overh}{\overline{h}}
\newcommand{\overx}{\overline{x}}
\newcommand{\BEAS}{\begin{eqnarray*}}
\newcommand{\EEAS}{\end{eqnarray*}}
\newcommand{\BEA}{\begin{eqnarray}}
\newcommand{\EEA}{\end{eqnarray}}
\newcommand{\BEQ}{\begin{equation}}
\newcommand{\EEQ}{\end{equation}}
\newcommand{\BIT}{\begin{itemize}}
\newcommand{\EIT}{\end{itemize}}
\newcommand{\BNUM}{\begin{enumerate}}
\newcommand{\ENUM}{\end{enumerate}}
\newcommand{\BPM}{\begin{pmatrix}}
\newcommand{\EPM}{\end{pmatrix}}
\newcommand{\BA}{\begin{array}}
\newcommand{\EA}{\end{array}}
\newcommand{\reals}{{\mathbb R}}
\newcommand{\symm}{{\mbox{\bf S}}}  % symmetric matrices
\newcommand{\QED}{~~\rule[-1pt]{6pt}{6pt}}\def\qed{\QED}
\newcommand{\argmin}{\mathop{\rm argmin}}
\newcommand{\dom}{\textup{dom}\,}
\newcommand{\interior}{\textup{int}\,}
\newcommand{\argmax}{\mathop{\rm argmax}}
\newcommand{\inner}[2]{{\langle #1, #2\rangle}}
\newcommand{\val}{\textup{val}}
\pgfplotsset{plotOptions/.style={%
		width=\linewidth,
		%		y post scale=0.8,
		%		ymin=0,ymax=100,
		%		ymin=0,
		%		xmin=0,xmax=100,
		xlabel={Iteration $k$},
		label style={font=\scriptsize},
		legend style={font=\scriptsize},
		%		legend pos=north west,
		%		legend cell align=left,
		xtick={1,10,100},
		tick label style={font=\scriptsize},
		%		no markers,
		solid,
		very thick
	}}
\definecolor{colorP1}{RGB}{55,126,184}  % blue
		\definecolor{colorP2}{RGB}{228,26,28}  % red
		\definecolor{colorP3}{RGB}{152,78,163} % purple
		\definecolor{colorP4}{RGB}{77,175,74}  % green
		\definecolor{colorP5}{rgb}{0.6, 0.4, 0.08} % golden yellow
\newcommand{\modif}[1]{{#1}}
\begin{document}

\title{Optimal Complexity and Certification of Bregman First-Order Methods%\thanks{Grants or other notes
%about the article that should go on the front page should be
%placed here. General acknowledgments should be placed at the end of the article.}
}
% \subtitle{Do you have a subtitle?\\ If so, write it here}

%\titlerunning{Short form of title}        % if too long for running head

\author{Radu-Alexandru Dragomir \and Adrien B. Taylor \and Alexandre d'Aspremont\footnote[1]{Last two authors listed in alphabetical order} \and J\'er\^ome Bolte\footnotemark[1]}

%\authorrunning{Short form of author list} % if too long for running head

\institute{Radu-Alexandru Dragomir \at
              Universit\'e Toulouse I Capitole, Toulouse \& 
D.I. \'Ecole Normale Sup\'erieure, Paris, France. radu-alexandru.dragomir@inria.fr 
           \and
           Adrien B. Taylor \at
    INRIA, D.I. \'Ecole Normale Sup\'erieure, Paris, France. adrien.taylor@inria.fr
\and
            Alexandre d'Aspremont \at            
            CNRS \& D.I., UMR 8548, \'Ecole Normale Sup\'erieure, Paris, France. aspremon@ens.fr
            \and
            J\'er\^ome Bolte \at TSE (Universit\'e Toulouse 1 Capitole), Toulouse, France. jbolte@ut-capitole.fr
}

\date{Last revised on September 29, 2020}

% \author{Radu-Alexandru Dragomir}
% \address{Universit\'e Toulouse I Capitole, Toulouse, \vskip 0ex
% D.I. Ecole Normale Sup\'erieure, Paris, France.}
% \email{radu-alexandru.dragomir@inria.fr}

% \author{Adrien B. Taylor}
% \address{INRIA, D.I. Ecole Normale Sup\'erieure, Paris, France}
% \email{adrien.taylor@inria.fr}

% \author{Alexandre d'Aspremont$^*$}
% \address{CNRS \& D.I., UMR 8548,\vskip 0ex
% \'Ecole Normale Sup\'erieure, Paris, France.}
% \email{aspremon@ens.fr}

% \author{J\'er\^ome Bolte$^*$}
% \address{TSE (Universit\'e Toulouse 1 Capitole),\vskip 0ex
% Toulouse, France.}
% \email{jbolte@ut-capitole.fr}
% \date{\today}

\maketitle
\begin{abstract}
We provide a lower bound showing that the $O(1/k)$ convergence rate of the NoLips method (a.k.a. Bregman Gradient or Mirror Descent) is optimal for the class of problems satisfying the relative smoothness assumption.
This assumption appeared in the recent developments around the Bregman Gradient method, where acceleration remained an open issue.

\modif{The main inspiration behind this lower bound stems from an extension of the performance estimation framework of Drori and Teboulle (Mathematical Programming, 2014) to Bregman first-order methods. This technique allows computing worst-case scenarios for NoLips in the context of relatively-smooth minimization. In particular, we used numerically generated worst-case examples as a basis for obtaining the general lower bound.}
\end{abstract}

\let\thempfn\relax% Remove footnote number printing mechanism
%   \footnotetext[1]{$^*$Last two authors listed in alphabetical order.}
  
% \keywords{Bregman distance \and NoLips \and Lower complexity bounds \and Performance Estimation Problems \and Computer-aided analysis \and Mirror descent}
% \PACS{PACS code1 \and PACS code2 \and more}
% \subclass{MSC code1 \and MSC code2 \and more}

% \setcounter{tocdepth}{3} \tableofcontents % toc for draft revising+, comment before submission
\section{Introduction}\label{s:intro}

% \paragraph{General FOM intro}
We consider the constrained minimization problem
\begin{equation}\label{eq:min_problem_bpg}\tag{P}
    \min_{x \in C} f(x)
\end{equation}
where $f$ is a convex continuously differentiable function and $C$ is a nonempty closed convex subset of $\reals^n$. In large-scale settings, first-order methods are particularly popular due to their simplicity and their low cost per iteration.

The (projected) gradient descent (PG) is a classical method for solving \eqref{eq:min_problem_bpg}, and consists in successively minimizing quadratic approximations of $f$, with
\begin{equation}\label{eq:grad_descent}\tag{PG}
x_{k+1} = \argmin_{u \in C} f(x_k) + \la \nabla f(x_k), u - x_k \ra + \frac{1}{2\lambda} \|u - x_k\|^2,
\end{equation}
where $\|\cdot\|$ is the Euclidean norm. Although standard, there is often no good reason for making such approximations, beyond our capability of solving this intermediate optimization problem. In other words, this traditional approximation typically does not reflect neither the geometry of $f$ nor that of $C$. 
% Furthermore, although apparently nice, each iteration of this procedure involves a projection onto the set $C$, which is typically doable, but costly, even for simple sets.
A powerful generalization of PG consists in performing instead a \textit{Bregman gradient step}
\begin{equation}\label{eq:bregman_descent}\tag{BG}
x_{k+1} = \argmin_{u \in C} f(x_k) + \la \nabla f(x_k), u - x_k \ra + \frac{1}{\lambda} D_h(u,x_k),
\end{equation}
where the Euclidean distance has been replaced by the \textit{Bregman distance} $D_h(x,y) := h(x) - h(y) - \la \nabla h(y),x-y \ra$ induced by some strictly convex and continuously differentiable \textit{kernel function} $h$. A well-chosen $h$ allows designing first-order algorithms adapted to the geometry of the constraint set and/or the objective function. Of course, a conflicting goal is to choose $h$ such that each iteration~\eqref{eq:bregman_descent} can be solved efficiently in practice, discarding choices such as $h=f$ (for which performing an iteration would be as hard as solving the original problem).

    Recently, Bauschke et al. \cite{Bauschke2017} introduced a natural condition for analyzing this scheme, assuming that the inner objective in the iteration \eqref{eq:bregman_descent} is an upper bound on $f$. This ensures that performing an iteration decreases the function values $f(x_k)$. This assumption, known as relative smoothness (precisely defined in Def.~\ref{def:h-smooth} below), generalizes the standard $L$-smoothness assumption implied by Lipschitz continuity of $\nabla f$. The Bregman gradient algorithm, also called NoLips in the setting of~\cite{Bauschke2017}, is thus a natural extension of gradient descent~\eqref{eq:grad_descent} to objective functions whose geometry is better modeled by a non-quadratic kernel $h$. Practical examples of relative smoothness arise in Poisson inverse problems \cite{Bauschke2017}, quadratic inverse problems \cite{Bolte2018}, rank minimization \cite{Dragomir2019} and regularized higher-order tensor methods \cite{Nesterov2018}.

\paragraph{Can we accelerate NoLips?} 
In the Euclidean setting where $h(x) = \frac{1}{2}\|\cdot\|^2$, accelerated projected gradient methods exhibit faster convergence than the vanilla projected gradient algorithm. These methods, which can be traced back to Nesterov~\cite{Nesterov1983}, are proven to be \textit{optimal} for $L$-smooth functions and have found a number of successful applications, in e.g., imaging \cite{Beck2009}. A natural question is therefore to understand whether the NoLips algorithm can be accelerated in the relatively-smooth setting. This question has been raised in several works, including that of Bauschke, Bolte and Teboulle~\citep[Section 6]{Bauschke2017}, that of Lu, Freund and Nesterov~\citep[Section 3.4]{Lu2016}, and the survey of Teboulle~\citep[Section 6]{Teboulle2018}. Partial answers have already been provided under somewhat strict additional regularity assumptions (see e.g.,~\cite{Auslender2006,Bayen2015,Hanzely2018} and discussions in the sequel), while the general case was apparently still open, and relevant in practical applications.
In this work, we produce a lower complexity bound proving that NoLips is \textit{optimal} for the general relatively-smooth setting, and therefore that generic acceleration is impossible.

In order to do so, we adopt the standard \emph{black-box model} used for studying complexity of first-order methods \cite{Nemirovski1983}. We consider that both $f$ and $h$ are described by first-order oracles, so as to obtain generic complexity results, and we look for worst-case \textit{couples} of functions $(f,h)$ satisfying the relative smoothness assumption. A central idea in our approach is the fact that, when studying the worst-case behavior of Bregman methods in the relatively-smooth setting, $f$ and $h$ can get arbitrarily close to some \emph{limiting pathological nonsmooth functions}. 
\modif{

\paragraph{Obtaining worst-case scenarios of Bregman first-order methods.} To obtain the lower complexity bound, we start by empirically inspecting the worst-case behaviors of NoLips. In other words, we show that worst-case scenarios (i.e., worst-case pairs of functions ($f,h$)) can be generated numerically through appropriate semidefinite programs (SDP).%, which we solved using appropriate software~\cite{mosek,yalmip}.

The problems of computing such worst-case scenarios are usually referred to as \emph{performance estimation problems} (PEPs), and were pioneered by~\cite{Drori2014} in the context of smooth convex minimization. An additional attractive feature of this approach is that feasible points to their dual problems naturally correspond to worst-case guarantees. For our purposes, we adapt the PEP framework to the setting of Bregman methods and relatively-smooth functions, and showcase the approach by providing worst-case examples for NoLips, along with the corresponding worst-case guarantees coming from its dual. Finally, the very simple and pathological worst-case functions for NoLips served as an inspiration for developing the more general lower bound for Bregman first-order schemes.

Discovering worst-case functions for NoLips is not the only interest of PEPs, as they also allow us to explore worst-case behaviors and convergence bounds for a variety of first-order methods, in a variety of settings, as we illustrate in the sequel.}

\subsection{Contributions and paper organization} 
The main contribution of this work is twofold.
    First, we provide a lower bound showing that it is impossible to generically accelerate Bregman gradient methods under the appropriate oracle model. More precisely, we show that the $O(1/k)$ convergence rate on function values of NoLips is \textit{optimal} in the relatively-smooth setting. As mentioned earlier, the family of worst-case functions that we used for developing the lower bound was inspired by numerical solutions to a series of Performance Estimation Problems (PEPs).
    
    \modif{For this purpose, we developed PEP techniques for Bregman settings. It required extending the analysis of~\cite{Taylor2017} to handle classes of differentiable (but not necessarily $L$-smooth) and strictly convex functions.} While we present the analysis on the basic NoLips algorithm for readability purposes, our results and methodology can be applied to various Bregman methods, such as inertial variants \cite{Auslender2006}, or the Bregman proximal point scheme for convex minimization and monotone inclusions \cite{Eckstein1993,Bui2019}. Besides discovering worst-case examples, PEPs can be used for obtaining bounds with various convergence criteria, as we showcase by proving a new rate on the Bregman divergence between successive iterates for NoLips.

    The paper is organized as follows. After introducing the setup in Section \ref{s:setup}, we prove the optimality of NoLips in Section \ref{s:complexity}. We expose the framework of computer-aided analysis of Bregman methods in Section \ref{s:pep}, including several applications in Section \ref{ss:proofs}. We point out that Sections \ref{s:complexity} and \ref{s:pep} are both of independent interest and can be read separately. 
    
\subsection{Related work}

\paragraph{Bregman methods.} The idea of using non-Euclidean geometries induced by convex kernels can be traced back to the work of Nemirovskii and Yudin \cite{Nemirovski1983}. For nonsmooth objectives, it gave birth to the mirror descent algorithm \cite{Ben-tal2001,Beck2003,Juditsky}, which generalizes the subgradient method to non-Euclidean geometries. It has been proven to be particularly efficient for minimization on the unit simplex, where choosing the \textit{entropy kernel} turns out to be much more effective and scalable than the squared Euclidean norm. This approach has been very successful in online learning; see \cite[Chap. 5]{Bubeck2011} and references therein. The use of Bregman distances has also been thoroughly studied for interior proximal methods \cite{Censor1992,Teboulle1992,Eckstein1993,Auslender2006}.

The introduction of the relative smoothness assumption in \cite{Bauschke2017} has provided a way to adapt the Bregman kernel to the geometry of the objective function $f$ and thus extend the domain of application of the Bregman Gradient method. Subsequent work has focused on nonconvex extensions \cite{Bolte2018}, linear convergence rates under additional assumptions \cite{Lu2016,Bauschke2019}, and inertial variants \cite{Hanzely2018,Mukkamala2019}.

\paragraph{Black-box model and lower complexity bounds.} The first-order black-box model, developed initially in the works of Nemirovskii \cite{Nemirovski1983} and later Nesterov \cite{Nesterov2004} has allowed to prove optimal complexity for several classes of problems in first-order optimization \cite{Drori2017}. 
\modif{These results usually rely on well-chosen \textit{worst-case functions} whose structure makes them difficult to minimize for all methods within a given class. Our worst-case instances are obtained from pointwise maxima of affine functions, reminiscent of lower bounds for nonsmooth convex minimization \cite{Nemirovski1983,Woodworth}. Our construction then involves smoothing those piecewise affine functions, making them differentiable. This technique is also used in the very related work of Guzman and Nemirovskii~\cite{Guzman2015}, which studies lower bounds for minimization of convex functions that are smooth with respect to $\ell_p$ norms. To the best of our knowledge, the lower bound obtained in the sequel is not a particular case of those in~\cite{Guzman2015}, as smoothness with respect to a certain norm is different from relative smoothness with respect to the same (squared) norm, beyond the $\ell_2$-norm.}
% The very related work of Guzman and Nemirovskii~\cite{Guzman2015} studies lower bounds of first-order methods for smooth convex minimization (with a particular focus on smoothness being measured with $l_p$-norms). The smoothing technique we use in the sequel is reminiscent of their technique. To the best of our knowledge, it does not contain the lower bound obtained in the sequel as a particular case.

\paragraph{Performance estimation problems.} The PEP methodology, proposed initially by \cite{Drori2014}, was already used to discover optimal methods and corresponding lower bounds in other settings: for smooth convex minimization~\cite{Drori2014,Kim2016,Drori2017,Drori2019a}, nonsmooth convex minimization~\cite{Drori2016,Drori2019a}, and stochastic optimization~\cite{Drori2019}.

\subsection{Notation}
We use $\overline{C}$ to denote the closure of a set $C$, $\interior C$ for its interior and $\partial C$ for its boundary. We denote $(e_1,\dots,e_n)$ the canonical basis of $\reals^n$, and for $p \in \{1,\dots n\}$ we write $E_p = \text{Span}(e_1,\dots,e_p)$ the set of vectors supported by the first $p$ coordinates. $\symm_n$ denotes the set of symmetric matrices of size $n$. If (P) is an optimization problem, then val(P) stands for its (possibly infinite) value. 

Subscripts on a vector denote the iteration counter, while a superscript such as $x^{(i)}$ denotes the $i$-th coordinate.
The set $I = \{0,1,\dots N,*\}$ is often used to index the first $N$ iterates of an optimization algorithm as well as the optimal point:
\[
\{x_i\}_{i\in I} = \{x_0,x_1,\dots,x_N,x_*\}.
\]
We use the standard notation $\la \cdot,\cdot\ra$ for the Euclidean inner product, and $\|\cdot\|$ for the corresponding Euclidean norm. For a vector $x \in \reals^n$, we write $\|x\|_\infty = \max_{i=1\dots n} |x^{(i)}|$ for its $\ell_\infty$ norm.
% Given a positive semidefinite matrix $A\succeq 0$, we also use the notation $\la \cdot,\cdot \ra_A=\la A \cdot,\cdot \ra$ and denote the corresponding induced semi-norm $\norm{\cdot}_A$. 
% Finally, we use $\Fccp$ to denote the class of closed, proper and convex functions. We also use the following subsets of $\Fccp$: (i) $\FmuL$ the set of $\mu$-strongly convex $L$-smooth functions (with $0\leq \mu\leq L\leq \infty$), (ii) the set $\Fsd$ of strictly convex differentiable functions, (iii) the set $\Fstrict$ of strictly convex functions, and (iv) the set $\Fdiff$ of differentiable convex functions.
Other notations are standard from convex analysis; see e.g., \cite{Rockafellar2008,Bauschke2011}.

\section{Algorithmic setup}\label{s:setup}
In this section, we introduce the base ingredients and technical assumptions on $f$ and $h$ that are used within Bregman first-order methods. %In particular, it is necessary to assume $h$ to be \emph{Legendre} in order to have well-defined iterations of the form~\eqref{eq:bregman_descent}.
\modif{
\subsection{Kernel functions}

Let $C$ be a nonempty closed convex subset of $\reals^n$.
The first step in defining Bregman methods is the choice of a \textit{kernel}  (or reference) function $h$ on $C$. %In particular, when $C=\reals^n$, the technical definition below reduces to requiring $h$ to be continuously differentiable and strictly convex.

\begin{definition}[Kernel function]
A function $h : \reals^n \rightarrow \reals \,\cup \, \{+\infty\} $ is called a kernel function on $C$ if
\begin{enumerate}[label = (\roman*)]
    \item $h$ is closed convex proper (c.c.p.),
    \item $\overline{\dom h} =C$,
    \item $h$ is continuously differentiable and strictly convex on $\interior \dom h \neq \emptyset$.
    % \item $\|\nabla h(x_k)\| \rightarrow \infty$ for every sequence $\{x_k\}_{k \geq 0}\subset \interior \dom h$ converging to a boundary point of $\dom h$ as $k \rightarrow \infty$. 
\end{enumerate}
\end{definition}
}
A kernel function $h$ induces a \textit{Bregman distance} $D_h$ defined as
\begin{equation*}
    D_h(x,y) = h(x) - h(y) - \la \nabla h(y), x-y \ra \quad \forall x \in \dom h, y \in  \dom \nabla h.
\end{equation*}
Note that $D_h$ is not a distance in the classical sense, however it enjoys a separation property; due to the strict convexity of $h$ we have $D_h(x,y) \geq 0  \,\,\, \forall x \in \dom h, y \in \dom \nabla h$, and $D_h(x,y)=0$ iff $x = y$.

\paragraph{Examples.} We list some of the most classical examples of kernel functions:
\begin{itemize}
    \item the \textbf{Euclidean kernel} $h(x) = \frac{1}{2}\|x\|^2$ with domain $\reals^n$, and for which $D_h(x,y) = \frac{1}{2}\|x-y\|^2$ is the Euclidean distance,
    \item the \textbf{Boltzmann-Shannon entropy} $h(x)= \sum_i x^{(i)} \log x^{(i)}$ extended to 0 by setting $0 \log 0= 0$, whose domain is thus $\reals^n_+$,
    \item the \textbf{Burg entropy} $h(x)= \sum_i - \log x^{(i)}$ with domain $\reals^n_{++}$,
    \item the \textbf{quartic kernel} $h(x) = \frac{1}{4}\|x\|^4 + \frac{1}{2}\|x\|^2$ with domain $\reals^n$ \cite{Bolte2018}.
\end{itemize}

We refer the reader to \cite{Bauschke2017,Lu2016} for more examples. It should be emphasized that, while a kernel function is differentiable on the interior of its domain, it is not required to be differentiable on the boundary. For instance, the Boltzmann-Shannon entropy is continuous but not differentiable at~$0$. Moreover, the domain of $h$ can be closed, such as for the Boltzmann-Shannon entropy, or open, as for the Burg entropy.

\paragraph{Convex conjugate.} If $h$ is a kernel function, we define its convex conjugate $h^*$ as
\[h^*(y) = \sup_{u \in \reals^n} \la u,y \ra - h(u)\]
If, for every $y \in \reals^n$, the supremum in the definition of $h^*(y)$ is attained, then $h^*$ is differentiable and its gradient satisfies for every $u \in \dom \nabla h^*$
\[ \nabla h^*(y) = \argmax_{u \in \reals^n}\, \la u,y \ra - h(u).\]
% is also Legendre \cite[Thm 26.5]{Rockafellar2008}, and that its gradient is the inverse of $\nabla h$, that is
% $\nabla h^* = (\nabla h)^{-1}$.

\subsection{Relatively-smooth optimization problems}\label{ss:relsmooth}

% We recall the framework of the NoLips algorithm described in \cite{Bauschke2017} for solving the minimization problem~\eqref{eq:min_problem_bpg}. As we are interested in studying the complexity, we focus here on the simple Bregman gradient method. Our lower bound will be \textit{a fortiori} valid for the Bregman \textit{proximal} gradient algorithm designed for solving composite problems \cite[Eq. (12)]{Bauschke2017}.
We now recall the framework of relatively-smooth optimization \cite{Bauschke2017,Lu2016} for solving the minimization problem 
\begin{equation}\label{eq:min_problem_bpg_recall}\tag{P}
    \min_{x \in C} f(x)
\end{equation}
For simplicity, we present the setting without nonsmooth regularization term; our lower bound is a fortiori valid for the Bregman \textit{proximal} gradient algorithm designed for solving composite problems~\cite[Eq. (12)]{Bauschke2017}.

Let us first state our blanket assumptions.
\modif{
\begin{assumption}\label{assumption_bpg}\
 \begin{enumerate}[label = (\roman*)]
       \item \label{ass:h_leg} $h$ is a kernel function on $C$,
     \item \label{ass:f_diff} $f : \reals^n \rightarrow \reals \cup \{+\infty\}$ is a closed convex proper function such that $\dom h \subset \dom f$ and which is continuously differentiable on $\dom \nabla h$, 
     \item \label{assumption:well_posed} For every $\lambda > 0$, $x \in \interior \dom h$ and $p \in \reals^n$, the problem 
     \[ \min_{u \in C} \la p, u-x \ra + \frac{1}{\lambda} D_h(u,x) \]
     has a unique minimizer, which lies in $\dom \nabla h$,
        \item \label{ass:bounded_below} The problem has at least one minimizer, i.e., $\argmin_C f \neq \emptyset$.
    %  \item \label{ass:argmin_in_dom} There exists at least one minimizer $x_* \in \argmin_C f$ such that $x_* \in \dom h$.
    
        % \item \label{ass:bounded_below} The problem is bounded from below, i.e. $f_* := \inf \,\{f(x) : x\in C\} > -\infty$, and there exists at least one minimizer $x_* \in \argmin_C f$ such that \[x_* \in \dom h.\]
 \end{enumerate}
 
\end{assumption}
}

% \begin{remark}\label{remark:h_smoothness}
Condition \ref{assumption:well_posed} is standard and ensures well-posedness of Bregman gradient methods. It is satisfied if, for instance, $h$ is strongly convex or supercoercive \cite[Lemma 2]{Bauschke2017}. 
% In Condition \ref{ass:argmin_in_dom}, we make the requirement that there is a solution $x_*$ to \eqref{eq:min_problem_bpg} that lies in $\dom h$. This is a nontrivial assumption and we must distinguish two cases:
% \begin{itemize}
%     \item if $\dom h$ is \textbf{closed}, as for the Euclidean kernel and the Boltzmann-Shannon entropy, then $C = \dom h$ and the condition is necessarily satisfied for every minimizer.
%     \item If $\dom h$ is \textbf{open}, like for the Burg entropy, Condition \ref{ass:argmin_in_dom} may fail as the minimizers $x_*$ can lie on the boundary of $\dom h$, where $h$ is infinite.
% \end{itemize}
% \end{remark}
In addition to these assumptions, the central property we need in order to apply the Bregman gradient method is the so-called relative smoothness \cite{Bauschke2017,Lu2016}.

\begin{definition}[Relative smoothness]\label{def:h-smooth}
    Let $h$ be a kernel function on $C$, and $f$ a function such that $\dom h \subset \dom f$. We say that $f$ is smooth relative to $h$ if there exists a constant $L > 0$ such that \begin{equation}\label{eq:rel_smooth}\tag{LC}
        Lh - f \quad \textup{is convex on}\, \dom h.
    \end{equation}
\end{definition}
Relative smoothness allows to build a simple global majorant of $f$; indeed, \eqref{eq:rel_smooth} implies that (see, e.g,~\cite{Bauschke2017})
\begin{equation*}
    f(x) \leq f(y) + \la \nabla f(y), x-y \ra + L D_h(x,y) \quad \forall x \in \dom h, y \in \dom \nabla h,
\end{equation*}
and the NoLips method consists in successively minimizing this upper approximation.

\modif{We list below some examples of relatively-smooth problems.

\begin{itemize}
    \item \textbf{Euclidean case. } When choosing the Euclidean kernel $h(x) = \frac{1}{2}\|x\|^2$, \eqref{eq:rel_smooth} reduces to the usual descent lemma and holds for instance when the gradient of $f$ is Lipschitz continuous. To avoid ambiguity, we refer to this standard Euclidean smoothness as \emph{L-smoothness}.
    \item \textbf{Classical mirror descent setting.} In some previous work on Bregman methods~\cite{Auslender2006,Bayen2015}, it is assumed that $f$ has a Lipschitz continuous gradient with constant $\tilde{L}$ and that the kernel $h$ is $\sigma$-strongly convex. This is a particular case of relative smoothness, since we have
    \begin{equation*}
    \begin{split}
          \left\{
        \begin{array}{ll}
            \nabla f \,\, \text{is Lipschitz continuous with constant } \tilde{L} \\
            h \,\,\text{is}\,\, \sigma-\text{strongly convex}
        \end{array}
    \right. &\implies
    \left\{
        \begin{array}{ll}
            \frac{\tilde{L}}{2} \|\cdot\|^2 - f \,\, \text{is convex} \\
            h - \frac{\sigma}{2}\|\cdot\|^2 \,\,\text{is convex}
        \end{array}
    \right. \\
    &\implies \frac{\tilde{L}}{\sigma}(h - \frac{\sigma}{2} \|\cdot\|^2) + (\frac{\tilde{L}}{2} \|\cdot\|^2 - f) \,\, \text{is convex} \\
    &\implies  \frac{\tilde{L}}{\sigma} h - f \,\, \text{is convex}\\
    &\implies f \text{ is } \text{smooth relative to } h \text{ with constant } \tilde{L}/\sigma.
    \end{split}
    \end{equation*}
    \item \textbf{Poisson inverse problems.} More recent examples include functions that are not $L$-smooth in the Euclidean sense, such as the Kullback-Leibler divergence between some observation $b \in \reals^m$ and a linear measurement $Ax$ of an unknown source vector $x \in \reals^n$:
    \[f(x) = D_{\textup{KL}}(b,Ax) = \sum_{j=1}^m b_j \log \big(\frac{b_j}{A_j x}\big) - A_j x + b_j.\]
    Minimizing $f$ on the nonnegative orthant allows the recovery of a signal corrupted with Poisson noise, which is a fundamental problem in imaging sciences \cite{Review2009}. In this setting, $f$ is not $L$-smooth since its Hessian diverges around the origin. However, it can be shown to be relatively-smooth with respect to the Burg entropy $h(x) = \sum_i -\log(x^{(i)})$ (see \cite{Bauschke2017}).
    \item \textbf{Quartic functions.} A large class of problems in phase recovery and low-rank matrix optimization involve minimizing polynomials of degree $4$. These polynomials are not globally $L$-smooth but are relatively-smooth with respect to the quartic kernel $h(x) = \frac{1}{4}\|x\|^4 + \frac{1}{2}\|x\|^2$ (see \cite{Bolte2018,Dragomir2019}).
\end{itemize}

}
% The relative smoothness assumption generalizes the usual smoothness assumption; in particular, when taking the Euclidean kernel $h(x) = \frac{1}{2}\|x\|^2$, \eqref{eq:rel_smooth} reduces to standard smoothness implied by the Lipschitz continuity of $ \nabla f$. To avoid ambiguity, we will refer to this standard Euclidean smoothness as \emph{L-smoothness}.
% By choosing different kernel functions $h$, it is possible to show that \eqref{eq:rel_smooth} holds for functions that are not $L$-smooth \cite{Bauschke2017,Bolte2018}.

% \paragraph{Remark.} A particular case of $h$-smoothness appears when $f$ has a Lipschitz continuous gradient with constant $\tilde{L}$ and the kernel $h$ is $\sigma$-strongly convex (see e.g.,~\cite{Auslender2006,Bayen2015}), provided that the norm is Euclidean. Indeed, in this case we have 
% \begin{equation*}
% \begin{split}
%       \left\{
%     \begin{array}{ll}
%         \nabla f \,\, \text{is Lipschitz continuous with constant } \tilde{L} \\
%         h \,\,\text{is}\,\, \sigma-\text{strongly convex}
%     \end{array}
% \right. &\implies
% \left\{
%     \begin{array}{ll}
%         \frac{\tilde{L}}{2} \|\cdot\|^2 - f \,\, \text{is convex} \\
%         h - \frac{\sigma}{2}\|\cdot\|^2 \,\,\text{is convex}
%     \end{array}
% \right. \\
% &\implies \frac{\tilde{L}}{\sigma}(h - \frac{\sigma}{2} \|\cdot\|^2) + (\frac{\tilde{L}}{2} \|\cdot\|^2 - f) \,\, \text{is convex} \\
% &\implies  \frac{\tilde{L}}{\sigma} h - f \,\, \text{is convex}
% \end{split}
% \end{equation*}
% which shows that $f$ is $h$-smooth with constant $\tilde{L} /\sigma$.
We use the following convenient notation to characterize the class of relatively-smooth problems.

\begin{definition}
    We say that the couple of functions $(f,h)$ is a relatively-smooth instance, and write~$(f,h) \in \mathcal{B}_L(C)$ if 
     \begin{enumerate}[label = (\roman*)]
        \item $f$ and $h$ satisfy Assumption \ref{assumption_bpg},
        \item $Lh-f$ is convex on $C$.
 \end{enumerate} 
 
    Finally, let us denote by $\mathcal{B}_L$ the union of $\mathcal{B}_L(C)$ for all closed convex sets $C$:
    \[ 
    \mathcal{B}_L = \bigcup_{n \geq 1} \bigcup_{\substack{C \subset \reals^n \\ C \textup{ closed convex}}} \mathcal{B}_L(C) 
    \]
\end{definition}

\modif{
\subsection{The NoLips/Bregman Gradient algorithm}
}

Previous assumptions allow defining the Bregman Gradient (BG)/NoLips algorithm for minimizing~$f$. For simplicity, we only consider the constant step size method.

\begin{algorithm}[H]
{\normalsize
	\begin{algorithmic}
		\STATE \textbf{Input:} $(f,h) \in \mathcal{B}_L(C)$, $x_0 \in \interior \dom h$, step size $\lambda \in (0,1/L]$.
		\FOR{k = 0,1,\dots}
		\STATE 
		    \begin{equation} \label{eq:iteration_nolips}
		    x_{k+1} = \argmin_{u \in \reals^n} \, \la \nabla f(x_k), u-x_k \ra + \frac{1}{\lambda} D_h(u,x_k) \end{equation}
		\ENDFOR
	\end{algorithmic}
	\caption{Bregman Gradient (BG) / NoLips \cite{Bauschke2017}}
	\label{algo:bpg}
}
\end{algorithm}

Using first-order optimality conditions, update~\eqref{eq:iteration_nolips} can alternatively be written as
\begin{equation}
    x_{k+1} = \nabla h^*\left[ \nabla h(x_k) - \lambda \nabla f(x_k) \right]
\end{equation}
 involving the gradient $\nabla h^*$ which is usually referred to as the \textit{mirror map}. \modif{The three operations $\nabla f, \nabla h$ and $\nabla h^*$ are the basic building blocks of Bregman-type methods, which we now define formally.}

 \subsection{Defining a class of Bregman first-order methods}
 
\modif{For proving a general lower bound for relatively-smooth optimization, we need to specify the oracle model and the class of methods under consideration.}
 
We adopt the first-order black-box model, where information about a function can be gained by calling an \textit{oracle} returning the value and gradient of $f$ at a given point. In the Bregman setting, we assume that we also have access to the first-order oracles of the kernel function $h$ and its conjugate $h^*$. 

\modif{\begin{definition}\label{def:algorithm}
An algorithm $\mathcal{A}$ is called a Bregman first-order algorithm if, for a given problem instance $(f,h) \in \mathcal{B}_L$ and number of iterations $T \in \mathbb{N}$, it generates at each time step $t \in \{ 0,\dots, T\}$, a set of primal points $\mathcal{X}_t$ and dual points $\mathcal{Y}_t$ from the following process:
    \begin{enumerate}
        \item Set $\mathcal{X}_0 = \{x_0\}$, where $x_0 \in \interior \dom h$ is some initialization point, and $\mathcal{Y}_0 = { \{\nabla f(x_0),\nabla h(x_0)\}}$.
        \item For each $t =1,\dots T$, perform one of the two following operations:
        \begin{itemize}
            \item either call the \textbf{primal oracle} $(\nabla f, \nabla h)$ at some point $x_t$ chosen such as
            \[ x_t \in \textup{Span}(\mathcal{X}_{t-1}) \cap \dom \nabla h \] and update the dual set as
            \[\mathcal{Y}_{t} = \mathcal{Y}_{t-1} \cup \{\nabla f(x_t), \nabla h(x_t)\}.\]
            \item Or call the \textbf{mirror oracle} $\nabla h^*$ at some dual point $y_t$ chosen such as 
            \[ y_t \in \textup{Span}(\mathcal{Y}_{t-1})\]
            with
            \[\nabla h^*(y_t) = \argmin_{u \in C} h(u) - \la y_t, u\ra\]
            and update the primal set as
            \[\mathcal{X}_{t} = \mathcal{X}_{t-1} \cup \{\nabla  h^*(y_t)\}.\]
        \end{itemize} 
        \item Output some point $x_T \in \textup{Span}(\mathcal{X}_T)$.
    \end{enumerate}
    \end{definition}}
% This model implicitly assumes that $y_t$ is chosen in the domain of the oracle so as to guarantee the existence of the next iterate. 

Such structural assumptions on the class of algorithms are classical from complexity analyses of Euclidean first-order methods and are used to prove e.g., optimality of accelerated first order methods~\cite{Nesterov2004}. Definition~\ref{def:algorithm} is a natural extension to the Bregman setting, allowing additional uses of the oracles associated with the kernel function $h$. This model can often be relaxed through the use of more involved information theoretic arguments, see e.g.,~\cite{Nemirovski1983,Guzman2015,Drori2017,Woodworth}.

Here, we focus on Definition \ref{def:algorithm} as it is general enough to encompass all Bregman-type methods that only use oracles for $\nabla f, \nabla h$, which we call the \textit{primal oracles}, the map $\nabla h^*$, which we call the \textit{mirror oracle}, as well as  linear operations. One can verify that all known Bregman gradient methods, including NoLips and inertial variants such as IGA \cite{Auslender2006} or the recent algorithm in \cite{Hanzely2018}, fit in this model.

Observe that, as NoLips performs one primal oracle call and one mirror call per iteration, an iteration of NoLips corresponds actually to \textit{two time steps} of the formal procedure in Definition \ref{def:algorithm}. This is why, in order to avoid ambiguity, we state our lower bound as a function of the number of oracle calls.

\section{Convergence rate and optimality of NoLips}\label{s:complexity}

In this section, we start by recalling the $O(1/k)$ convergence rate bound for the NoLips algorithm in the setting where $(f,h) \in \mathcal{B}_L(C)$. We then proceed to prove that NoLips is an \textit{optimal} algorithm for the class $\mathcal{B}_L(C)$, by showing that this rate is also a \textit{lower bound} for a generic class of Bregman gradient algorithms that we define below. The key elements for proving the lower bound were empirically discovered through the solution to a Performance Estimation Problem (PEP), which is detailed in Section~\ref{s:pep}.

\subsection{Upper bound}\label{ss:upper_bound}

% We first state the following complexity bound for NoLips, which has also been established in the recent work \citep[Cor. 1]{Zhou2019}. It improves the constant terms of the original bound of Bauschke et al. \cite{Bauschke2017}.
We first state the $O(1/k)$ convergence rate for NoLips. It slightly differs with the one from \cite{Bauschke2017}, as it is improved by a factor of 2 and does not involve the so-called \textit{symmetry coefficient}. 
\begin{theorem}[NoLips convergence rate]\label{thm:nolips_bound}
    Let $L > 0$, $C$ be a nonempty closed convex subset of $\reals^n$ and $(f,h) \in \mathcal{B}_L(C)$ be an relatively-smooth instance. Then the sequence $\{x_k\}_{k \geq 0}$ generated by Algorithm \ref{algo:bpg} with constant step size $\lambda \in (0,1/L]$ satisfies for all $k \geq 0$
    \modif{
    \begin{equation}\label{eq:comp_estimate}
        f(x_k) - f(u) \leq \frac{D_{h}(u,x_0)}{\lambda \, k}
    \end{equation}
    for every $u \in \dom h$. }
\end{theorem}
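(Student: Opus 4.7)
The plan is to combine the three standard ingredients — the $h$-smoothness descent inequality, the first-order optimality of the NoLips update, and convexity of $f$ — with the Bregman three-point identity
\[
D_h(a,c) - D_h(a,b) - D_h(b,c) = \langle \nabla h(b) - \nabla h(c),\, a-b\rangle,
\]
to derive a one-step energy decrease, then telescope. All iterates remain in $\interior \dom h$ by Assumption \ref{assumption_bpg}\ref{assumption:well_posed}, and $x_* \in \dom h$ by \ref{ass:argmin_in_dom}, so every gradient and every Bregman distance below is well defined.

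First I would write the descent lemma implied by $h$-smoothness: for $\lambda \in (0,1/L]$ and all $k\geq 0$,
\[
f(x_{k+1}) \leq f(x_k) + \langle \nabla f(x_k), x_{k+1} - x_k\rangle + L D_h(x_{k+1}, x_k).
\]
Combined with convexity of $f$ at $x_k$ in the form $f(x_k) - f_* \leq \langle \nabla f(x_k), x_k - x_*\rangle$, this gives
\[
f(x_{k+1}) - f_* \leq \langle \nabla f(x_k), x_{k+1} - x_*\rangle + L D_h(x_{k+1}, x_k).
\]
The first-order optimality condition for~\eqref{eq:iteration_nolips} reads $\lambda \nabla f(x_k) = \nabla h(x_k) - \nabla h(x_{k+1})$; multiplying the previous display by $\lambda$ and applying the three-point identity with $(a,b,c)=(x_*,x_{k+1},x_k)$ yields the key per-step inequality
\[
\lambda\bigl(f(x_{k+1}) - f_*\bigr) \leq D_h(x_*, x_k) - D_h(x_*, x_{k+1}) + (\lambda L - 1)\,D_h(x_{k+1}, x_k).
\]

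Since $\lambda \leq 1/L$, the last term is non-positive and can be dropped. The sequence $\{f(x_k)\}$ is moreover non-increasing: substituting the optimality bound $\langle \nabla f(x_k), x_{k+1}-x_k\rangle \leq -\tfrac{1}{\lambda}D_h(x_{k+1},x_k)$ (obtained by comparing the minimum value of the NoLips subproblem to its value at $u=x_k$) into the descent lemma gives $f(x_{k+1}) \leq f(x_k) + (L - 1/\lambda)D_h(x_{k+1},x_k) \leq f(x_k)$. Telescoping the per-step inequality from $j=0$ to $k-1$ and using $D_h(x_*,x_k) \geq 0$ yields
\[
\lambda \sum_{j=0}^{k-1}\bigl(f(x_{j+1}) - f_*\bigr) \leq D_h(x_*, x_0),
\]
and monotonicity of $\{f(x_j)\}$ bounds the left-hand side below by $\lambda k(f(x_k)-f_*)$, giving the claim.

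I expect no serious obstacle: the argument is a clean variation on the classical mirror-descent analysis. The only point that requires mild care is the improvement by a factor of $2$ and the absence of the symmetry coefficient of~\cite{Bauschke2017}: this comes precisely from using the three-point identity to convert $\langle \nabla h(x_k)-\nabla h(x_{k+1}),\, x_{k+1}-x_*\rangle$ into an exact difference of Bregman distances, rather than bounding the cross term through a symmetry-type inequality. Verifying that $x_{k+1}\in \interior\dom h$ at every step — which is needed to invoke $h$-smoothness at $(x_{k+1},x_k)$ and the optimality condition — is immediate from Assumption \ref{assumption_bpg}\ref{assumption:well_posed}.
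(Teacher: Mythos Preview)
Your proof is correct. It follows the classical mirror-descent/Lyapunov route: descent lemma from $h$-smoothness, convexity of $f$, three-point identity to turn the cross term into an exact difference $D_h(x_*,x_k)-D_h(x_*,x_{k+1})-D_h(x_{k+1},x_k)$, then telescoping together with monotonicity of $\{f(x_k)\}$.

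The paper takes a genuinely different route. Its proof is a direct PEP-style certificate: it writes down a specific weighted combination of the convexity inequalities for $f$ and for $\tfrac{1}{\lambda}h-f$ between pairs of iterates (with weights $\gamma_{*,i}=\tfrac1k$, $\gamma_{i,i+1}=\tfrac{i}{k}$, $\mu_{*,k}=\tfrac1k$, $\mu_{i+1,i}=\tfrac{i+1}{k}$, $\mu_{i,i+1}=\tfrac{i}{k}$), substitutes $\nabla h(x_{i+1})=\nabla h(x_i)-\lambda\nabla f(x_i)$, and checks algebraically that the sum collapses \emph{exactly} to $0\geq f(x_k)-f_*-D_h(x_*,x_0)/(\lambda k)$ with no residual. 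No three-point identity, no separate monotonicity argument, no intermediate per-step inequality. The advantage of the paper's approach is that the weights were produced mechanically by the dual of the performance-estimation SDP, so the same machinery certifies tightness and generalizes to other criteria (as in Proposition~\ref{prop:nolips_bound2}). The advantage of your approach is that it is self-contained and conceptually transparent: one sees directly where the factor-of-two improvement over~\cite{Bauschke2017} and the disappearance of the symmetry coefficient come from, namely from using the three-point identity as an equality rather than bounding the cross term.
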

\modif{
\begin{remark}
Let $x_* \in \argmin_C f$. In order to take $u = x_*$ in Equation \eqref{eq:comp_estimate} and obtain a bound on the suboptimality gap $f(x_k)- f(x_*)$, we need $x_*$ to belong to the domain of $h$. In most cases, this condition is trivially satisfied. However, it can fail if $x_*$ lies on the boundary of $C$ and $\dom h$ is open, such as for the Burg entropy.
\end{remark}
}

The proof of Theorem \ref{thm:nolips_bound}, whose analytical form has been inferred from solving a PEP, is provided in Section \ref{sss:nolips_pep}.
This result extends the $O(1/k)$ rate of Euclidean gradient descent for $L$-smooth functions to the relatively-smooth setting. 
% However, unlike in the Euclidean case, we show in the next section that this rate is actually neither improvable for NoLips, nor for other Bregman first-order methods.

\paragraph{Faster algorithms under additional assumptions.}
It is natural to ask whether an \textit{accelerated} Bregman algorithm can be obtained, with a better convergence rate than $O(1/k)$. This has already been achieved under additional regularity assumptions, as follows
\begin{itemize}
    \item in the Euclidean setting, when $h(x) = \frac{1}{2}\|x\|^2$ and $f$ is $L$-smooth, the seminal accelerated gradient method of Nesterov \cite{Nesterov1983} enjoys a $O(1/k^2)$ convergence rate, which is optimal for this class of functions \cite{Nesterov2004}.
    \item When $h$ is a strongly convex kernel with closed domain and $f$ is $L$-smooth (which, as discussed in Section \ref{ss:relsmooth}, is a particular case of relative smoothness), the Improved Interior Gradient Algorithm~(IGA)~\cite{Auslender2006} also admits a $O(1/k^2)$ convergence rate using the same momentum technique as Nesterov-type methods. 
    \item Recently, \cite{Hanzely2018} proposed an accelerated Bregman proximal gradient algorithm with rate $O(1/k^\gamma)$, where $\gamma \in [1,2]$ is determined by some crucial \textit{triangle scaling property} of the Bregman distance, whose genericity is unclear.
\end{itemize}

However, the existence of an accelerated algorithm for the general relatively-smooth setting was still an open question prior to this work. Indeed, many applications such as Poisson inverse problems \cite{Bauschke2017} or D-optimal design \cite{Lu2016} do not satisfy the supplementary assumptions made in the works mentioned above.
In the next section, we prove that, up to a constant factor of $2$, the bound \eqref{eq:comp_estimate} is not improvable in general for Bregman-type methods, making NoLips an \textit{optimal} algorithm in the black box setting for $(f,h)\in\mathcal{B}_L$.

\subsection{A lower bound for relatively-smooth Bregman optimization} 
\label{ss:lower_bound}

We show in Theorem \ref{thm:low_bound} below that for any $\epsilon \in (0,1)$ and number of oracle calls $N$, there is a pair of functions $(f,h) \in  \mathcal{B}_L(\reals^{2N+1})$ and some $x_0 \in \reals^{2N+1}$ such that for any \textit{Bregman gradient algorithm} initialized at $x_0$, the output $x_N$ returned after performing at most $N$ oracle calls satisfies
\begin{equation}\label{eq:low_bound_first}
            f(x_N) - \min_{\reals^{2N+1}}f \geq (1-\epsilon)\,\frac{LD_{h}(x_0, x_*)}{2N+1}.
\end{equation}

\paragraph{Proof intuition.} For finding an instance $(f,h)$ which is difficult for all Bregman methods, we use two main ideas. The first is the well-known technique used by Nesterov \cite{Nesterov2004} for proving that $O(1/k^2)$ is the optimal complexity for $L$-smooth convex minimization. He defines a ``worst function in the world" that allows any gradient method to discover only one dimension per iteration, hence \textit{hiding} the minimizer from the algorithm in the remaining unexplored dimensions.

The second idea is more specific to our setting, and relies on the fact that the set of relatively-smooth problems $\mathcal{B}_L(C)$ is not closed. In particular, a limit of differentiable functions need not be differentiable. \modif{Thence,  we actually build} a worst-case \textbf{sequence} of differentiable functions parameterized by some parameter $\mu$, whose limit when $\mu \rightarrow 0$ is a nonsmooth pathological function. 

\paragraph{Choosing the objective function.}
Let us fix a dimension $n \geq 1$ and a positive constant $\eta > 0$. Define the convex function $\hat{f}$ for $x \in \reals^n$ by
\begin{equation*}
    \hat{f}(x) = \max_{i=1,\dots,n} |x^{(i)} - 1-\frac{\eta}{i}| = \|x-x_*\|_\infty 
\end{equation*}
which has an optimal value $\hat{f_*} = 0$ attained at
\[x_* := (1+\eta,1+\frac{\eta}{2},\dots,1+\frac{\eta}{n}).\]
The behavior of $\hat{f}$ as a \textit{pathological function} comes from the fact that if at least one of the coordinates of $x$ is zero, then $\hat{f}(x)-\hat{f}_* \geq 1$. Let us first prove a technical lemma about the subdifferential of $\hat{f}$.

\begin{lemma}\label{lemma:subdiff_fhat}
    Let $x \in \reals^n$ and $v \in \partial \hat{f}(x)$ be a subgradient of $\hat{f}$ at $x$. Then
    \begin{enumerate}[label = (\roman*)]
        \item \label{item:first_prop_subgrad} $\|v\|_\infty \leq 1$.
        \item \label{item:second_prop_subgrad} Let $i \in \{1\dots n\}$. If $v^{(i)} \neq 0$ then $|x^{(i)}-x_*^{(i)}| = \|x-x_*\|_\infty$.
    \end{enumerate}
\end{lemma}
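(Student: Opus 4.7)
My plan is to reduce both claims to the standard subdifferential calculus for a pointwise maximum of smooth convex functions. I would begin by rewriting $\hat f$ as a finite max of affine pieces: for $i\in\{1,\dots,n\}$ and $s\in\{-1,+1\}$, set $g_{i,s}(x) = s\,(x^{(i)} - x_*^{(i)})$, so that
\[
\hat f(x) \;=\; \max_{\,i\in\{1,\dots,n\},\; s\in\{-1,+1\}} g_{i,s}(x).
\]
Each $g_{i,s}$ is affine with gradient $s\,e_i$. Let $A(x) = \{(i,s) : g_{i,s}(x) = \hat f(x)\}$ denote the active set at $x$. By the standard max rule for subdifferentials of continuous convex functions (see, e.g., Rockafellar), one has
\[
\partial \hat f(x) \;=\; \mathop{\textup{conv}}\{\,s\,e_i \,:\, (i,s)\in A(x)\,\}.
\]

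For \emph{(i)}, I would observe that every extreme point $s\,e_i$ of this convex hull satisfies $\|s\,e_i\|_1 = 1$, and $\|\cdot\|_1$ is convex, so any convex combination $v$ of these extreme points obeys $\|v\|_1 \leq 1$. Since $\|v\|_\infty \leq \|v\|_1$, this immediately gives $\|v\|_\infty \leq 1$, proving the first bullet.

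For \emph{(ii)}, I would argue contrapositively from the same representation. If $v \in \partial\hat f(x)$ is written as $v = \sum_{(i,s)\in A(x)} \alpha_{i,s}\,s\,e_i$ with $\alpha_{i,s}\geq 0$ summing to $1$, then the $i$-th coordinate of $v$ depends only on the coefficients $\alpha_{i,+1}$ and $\alpha_{i,-1}$. Hence $v^{(i)}\neq 0$ forces at least one of these coefficients to be strictly positive, meaning $(i,s)\in A(x)$ for some sign $s$. By definition of $A(x)$, this yields $s(x^{(i)} - x_*^{(i)}) = \hat f(x) = \|x - x_*\|_\infty$, and in particular $|x^{(i)} - x_*^{(i)}| = \|x-x_*\|_\infty$, as claimed.

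\textbf{Main obstacle.} There is no real obstacle here: the result is a direct consequence of the max-rule. The only point that requires a little care is to make sure the argument covers \emph{all} subgradients, not merely the extreme ones; this is handled by the convex-hull formula and by the linearity of $v\mapsto v^{(i)}$ used in step (ii). Everything else reduces to reading off the structure of $\partial\hat f(x)$.
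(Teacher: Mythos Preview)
Your proof is correct and follows essentially the same approach as the paper: both apply the max-rule for subdifferentials and read off the claims from the resulting convex-hull description. The only cosmetic difference is that you decompose one level further into $2n$ affine pieces $g_{i,s}$ rather than the $n$ absolute-value pieces $\hat f_i$ used in the paper, which incidentally yields the slightly sharper bound $\|v\|_1\le 1$.
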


\begin{proof}
    Write $\hat{f}$ as  $
        \hat{f}(x) = \max_{1 \leq i \leq n} \hat{f}_i(x)
    $
    with $\hat{f}_i(x) = |x^{(i)} - x^{(i)}_*|$. Then, by \citep[Lemma 3.1.10]{Nesterov2004}, we have \[
        \partial \hat{f}(x) = \text{Conv} \,\{ \partial \hat{f}_i(x) | i \in I(x)\}
    \]
    where $I(x) = \{i \in \{1\dots n\}\,|\, \hat{f}_i(x) = \hat{f}(x)\}$.
    Hence, \ref{item:first_prop_subgrad} follows immediately from the well-known property that the subgradients of the absolute value lie in $[-1,1]$. \ref{item:second_prop_subgrad} is a consequence of the fact that if $v^{(i)} \neq 0$, then $i \in I(x)$, which means that $|x^{(i)}-x_*^{(i)}| = \|x-x_*\|_\infty$.
\qed
\end{proof}

Note that $\hat{f}$ is nonsmooth hence does not meet our assumptions. We approach it with a differentiable function by considering its Moreau envelope $f_\mu$ given by 
\begin{equation}\label{eq:def_fmu}
    f_\mu(x) = \min_{u \in \reals^n} \hat{f}(u) + \frac{1}{2 \mu} \| x-u \|^2
\end{equation}
where $\mu \in (0,1)$ is a small parameter. $f_\mu$ is a smoothed version of $\hat{f}$, which behaves similarly to $\hat{f}$ when we choose $\mu$ small enough. Figure \ref{fig:f_mu} illustrates this phenomenon in two dimensions.

For general properties of the Moreau proximal envelope, we refer to \cite{Moreau1965}. Let us state some properties of $f_\mu$ that we need for the analysis.

\begin{lemma}\label{lemma:fmu}
    $f_\mu$ is a differentiable convex function, whose minimizers are the same as those of $\hat{f}$. Its gradient at a point $x \in \reals^n$ is given by $\nabla f_\mu(x) = \mu^{-1}\left(x- \textup{prox}_{\hat{f}}^\mu\left(x\right)\right)$ where 
    
    \begin{equation*}
        \textup{prox}_{\hat{f}}^\mu(x) = \argmin_{u \in \reals^n} \hat{f}(u) + \frac{1}{2\mu} \|x-u\|^2
    \end{equation*}
    is the Moreau proximal map. Moreover, $\nabla f_\mu$ is Lipschitz continuous with constant $1/\mu$.
\end{lemma}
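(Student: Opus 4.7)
The plan is to verify each claim using standard Moreau--Yosida calculus. Write $g(x,u) := \hat{f}(u) + \tfrac{1}{2\mu}\|x-u\|^2$. Since $g(x,\cdot)$ is $1/\mu$-strongly convex, the inner problem in \eqref{eq:def_fmu} admits, for every $x$, a unique minimizer $u^*(x) := \textup{prox}_{\hat{f}}^\mu(x)$. Convexity of $f_\mu$ follows from the joint convexity of $g$ together with the standard fact that partial minimization of a jointly convex function preserves convexity. For the equality of minima, $f_\mu \geq 0$ since both terms in \eqref{eq:def_fmu} are nonnegative (we have $\hat{f}\geq 0$), while plugging $u = x_*$ into the infimum gives $f_\mu(x_*) \leq \hat{f}(x_*) = 0$; hence $\min f_\mu = 0 = \min \hat{f}$.

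For the gradient formula, I would proceed by a direct sandwich argument. Set $v := \mu^{-1}(x - u^*(x))$; the first-order optimality condition for the prox reads $v \in \partial \hat{f}(u^*(x))$. The upper bound follows by plugging the suboptimal choice $u = u^*(x)$ into the definition of $f_\mu(y)$ and expanding the quadratic:
\[ f_\mu(y) - f_\mu(x) \leq \langle v, y-x\rangle + \tfrac{1}{2\mu}\|y-x\|^2. \]
The matching lower bound $f_\mu(y) - f_\mu(x) \geq \langle v, y-x\rangle$ is obtained by combining the subgradient inequality $\hat{f}(u^*(y)) \geq \hat{f}(u^*(x)) + \langle v, u^*(y)-u^*(x)\rangle$ with the expansion of the two quadratic terms in $f_\mu(y)-f_\mu(x)$; after rearrangement, the required inequality reduces to the trivial estimate $\|(y-u^*(y))-(x-u^*(x))\|^2 \geq 0$. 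Sandwiching shows that $f_\mu$ is differentiable at $x$ with $\nabla f_\mu(x) = v$, as announced.

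For the $1/\mu$-Lipschitz bound on $\nabla f_\mu$, I would invoke firm non-expansiveness of the proximal map: from $\mu^{-1}(x-u^*(x)) \in \partial\hat{f}(u^*(x))$ and the analogous inclusion at $y$, monotonicity of $\partial \hat{f}$ yields $\langle u^*(x)-u^*(y),\, x-y\rangle \geq \|u^*(x)-u^*(y)\|^2$, which is equivalent to $I - u^*$ being firmly non-expansive, and in particular $1$-Lipschitz. Since $\nabla f_\mu = \mu^{-1}(I - u^*)$, this gives $\|\nabla f_\mu(x)-\nabla f_\mu(y)\| \leq \tfrac{1}{\mu}\|x-y\|$. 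None of these steps presents a real obstacle, the statement being a restatement of classical Moreau--Yosida theory; an even shorter route is to observe that $f_\mu^* = \hat{f}^* + \tfrac{\mu}{2}\|\cdot\|^2$ is $\mu$-strongly convex, which by conjugate duality immediately gives both differentiability of $f_\mu$ and the announced Lipschitz constant.
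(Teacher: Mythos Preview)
Your proof is correct. The paper does not actually prove this lemma: it simply cites Moreau's original work \cite{Moreau1965} and states the properties without argument. Your sandwich argument for the gradient formula and the firm-nonexpansiveness route to the Lipschitz constant are exactly the standard Moreau--Yosida computations the paper is implicitly deferring to, so there is nothing to compare---you have supplied the details the authors chose to omit.
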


\begin{figure}
  \centering
  \subfigure[$\hat{f}$]{
    \includegraphics[width = 0.3\columnwidth]{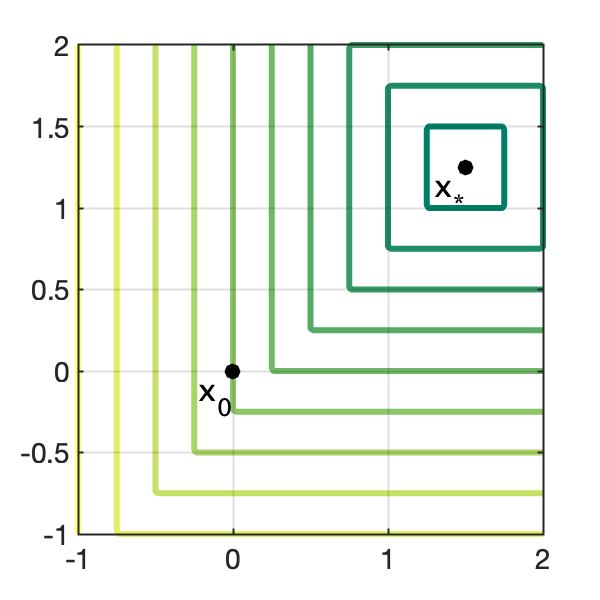}
  }
  \quad
  \subfigure[$f_\mu$]{
    \includegraphics[width = 0.3\columnwidth]{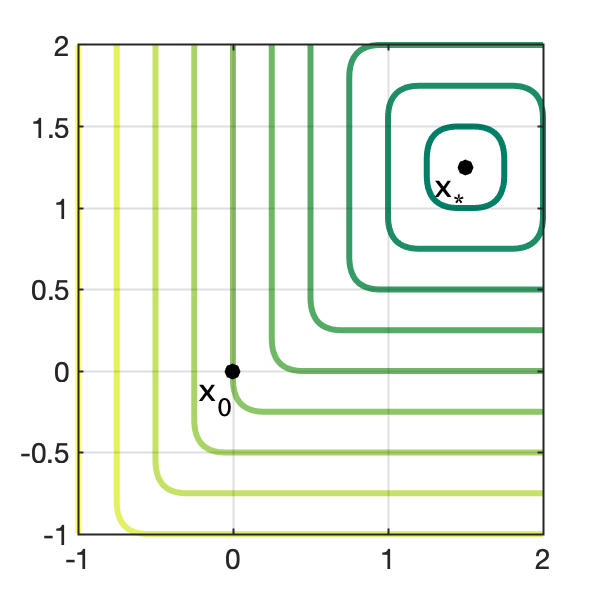}
  }

\caption{ Level curves of function $\hat{f}$ (left) and of its smoothed Moreau evelope $f_\mu$ (right) for $n = 2,\mu = 0.2$ and $\eta = 1/2$. Lemma \ref{lemma:f_zero_pres} states that if $\mu$ is small enough compared to $\eta$, the behaviors of $\hat{f}$ and $f_\mu$ at $x_0 = 0$ are the same. Indeed, the size of the smoothed region where the corners are ``rounded" decreases when $\mu$ goes to 0.}
\label{fig:f_mu}
\end{figure}

Let us now prove the central property of $f_\mu$, which states that when the last $n - p$ coordinates of $x$ are small enough, the gradient $\nabla f_\mu(x)$ is supported on the first $p+1$ coordinates.
Recall that we denote $(e_1,\dots,e_n)$ the canonical basis of $\reals^n$ and write, for $p \in \{1\dots n\}$, $E_p = \text{Span}(e_1,\dots,e_p)$ and $E_0 = \{(0,\dots, 0)\}$.

\begin{lemma}\label{lemma:f_zero_pres}
Assume that $\mu \in (0,1)$ and $\eta > 4 \mu n^2$.
    Let $p \in \{0\dots n-1\}$. For any vector $x \in \reals^n$ such that \[\max_{i=p+1,\dots, n} |x^{(i)}| \leq \mu\]
    we have that $\nabla f_\mu(x) \in E_{p+1}$. In addition, we have $\|\nabla f_\mu(x)\|_\infty \leq 1$.
\end{lemma}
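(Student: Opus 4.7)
The plan is to read off the gradient from the proximal optimality conditions and then use Lemma \ref{lemma:subdiff_fhat} to localize its support. Setting $u^* := \mathrm{prox}_{\hat f}^{\mu}(x)$, Lemma \ref{lemma:fmu} gives $v := \nabla f_\mu(x) = \mu^{-1}(x - u^*)$, hence $u^* = x - \mu v$, and the first-order optimality condition for the prox problem yields $v \in \partial \hat f(u^*)$. The auxiliary statement $\|\nabla f_\mu(x)\|_\infty \le 1$ is then an immediate consequence of Lemma \ref{lemma:subdiff_fhat}\ref{item:first_prop_subgrad}.

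For the main claim I would argue by contradiction: suppose $v^{(i)} \ne 0$ for some $i \ge p+2$. Lemma \ref{lemma:subdiff_fhat}\ref{item:second_prop_subgrad} then forces $|u^{*(i)} - x_*^{(i)}| = \|u^* - x_*\|_\infty$. Two competing estimates on this $\ell_\infty$ norm produce the contradiction. Upper bound: since $i \ge p+1$ the hypothesis gives $|x^{(i)}| \le \mu$, and combined with $|v^{(i)}| \le 1$ this yields $|u^{*(i)}| \le 2\mu$, so $\|u^* - x_*\|_\infty = |u^{*(i)} - x_*^{(i)}| \le 1 + \eta/i + 2\mu$. Lower bound: the very same estimate applied to the transition coordinate $p+1$ (which also satisfies $|x^{(p+1)}| \le \mu$) gives $|u^{*(p+1)}| \le 2\mu$ and hence $\|u^* - x_*\|_\infty \ge |u^{*(p+1)} - x_*^{(p+1)}| \ge 1 + \eta/(p+1) - 2\mu$.

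Putting the two inequalities together yields $\eta\bigl(\tfrac{1}{p+1} - \tfrac{1}{i}\bigr) \le 4\mu$. Since $1 \le p+1 < i \le n$, the coefficient on the left is at least $\tfrac{1}{(p+1)i} \ge \tfrac{1}{n^2}$, so $\eta \le 4\mu n^2$, contradicting the hypothesis $\eta > 4\mu n^2$. The edge case $p = n-1$ is vacuous since then $E_{p+1} = \reals^n$. The only conceptual subtlety is recognizing that the hypothesis provides control not only on the coordinates $\ge p+2$ that we aim to eliminate in $v$, but also on the transition coordinate $p+1$; without invoking this extra coordinate there would be no way to lower-bound $\|u^* - x_*\|_\infty$, since nothing is known about $x^{(1)}, \dots, x^{(p)}$. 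Everything else is a direct combination of the proximal optimality condition with Lemma \ref{lemma:subdiff_fhat}.
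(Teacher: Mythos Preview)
Your proof is correct and follows essentially the same approach as the paper: identify $\nabla f_\mu(x)=v\in\partial\hat f(u^*)$ with $u^*=x-\mu v$, assume $v^{(i)}\neq 0$ for some $i\ge p+2$, and obtain a contradiction by comparing the $i$-th and $(p+1)$-th coordinates of $u^*-x_*$ using the hypothesis on the last $n-p$ coordinates of $x$. The only cosmetic difference is in the final bookkeeping: the paper shows both quantities inside the absolute values are positive, drops them, and concludes $v^{(l)}-v^{(p+1)}>2$, whereas you bound $|u^{*(i)}|,|u^{*(p+1)}|\le 2\mu$ via the triangle inequality and derive $\eta\le 4\mu n^2$ directly; both routes use the same ingredients and the same pivot coordinate $p+1$.
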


\begin{proof}
Take $x \in \reals^n$ such that $\max_{i=p+1,\dots, n} |x_i| \leq \mu$. By Lemma \ref{lemma:fmu}, $\nabla f_\mu$ is given by
\begin{equation}\label{eq:prox_grad}
    \nabla f_\mu(x) = \frac{1}{\mu}(x-\text{prox}^\mu_{\hat{f}}(x))
\end{equation}
Write $y = \text{prox}^\mu_{\hat{f}}(x)$. The optimality condition defining the proximal map yields
\begin{equation}\label{eq:prox_cond}
    y - x + \mu v = 0
\end{equation}
where $v \in \partial \hat{f}(y)$, and therefore the combination of  \eqref{eq:prox_grad} and \eqref{eq:prox_cond} implies 
\begin{equation}\label{eq:grad_fmu}
    \nabla f_\mu(x) = v \in \partial \hat f(y).
\end{equation}

Now, let us assume by contradiction that $\nabla f_\mu(x) $ is not in $E_{p+1}$, meaning that there exists an index \\$l \in \{p+2\dots n\}$ such that $v^{(l)} \neq 0$.
It follows from Lemma \ref{lemma:subdiff_fhat} that $|(y-x_*)^{(l)}| = \|y-x_*\|_\infty$. Hence we have in particular that $|y^{(l)}-x_*^{(l)}| \geq |y^{(p+1)}-x^{(p+1)}_*|$. Using Condition \eqref{eq:prox_cond} to replace $y$ we get \[
        | x^{(l)}_* + \mu v^{(l)} - x^{(l)}| \geq | x^{(p+1)}_* + \mu v^{(p+1)} - x^{(p+1)}|,
\]
and recalling the definition of $x_*$ we have
\[
        | 1 + \frac{\eta}{l} + \mu v^{(l)} - x^{(l)}| \geq | 1 + \frac{\eta}{p+1} + \mu v^{(p+1)} - x^{(p+1)}|.
\]
By Lemma \ref{lemma:subdiff_fhat}, $\|v\|_\infty\leq1$, so for all $i$ we have $1 + \mu v^{(i)} \geq 1 - \mu \|v\|_\infty \geq 0$. In addition, we assumed that $\max_{i=p+1,\dots, n} |x^{(i)}| \leq \mu < \frac{\eta}{4n^2}$ which implies $\frac{\eta}{i} - x^{(i)} \geq 0$ for all $i \geq p+1$. Therefore, both terms inside the absolute values are nonnegative, it follows that we can drop absolute values and 
\begin{equation}
\begin{split}
          \mu(v^{(l)}-v^{(p+1)})  &\geq   \frac{\eta}{p+1}-\frac{\eta}{l} + x^{(l)} - x^{(p+1)} \\
          &\geq \eta \cdot\frac{l - (p+1)}{l(p+1)} - 2 \mu \\ 
          &\geq \frac{\eta}{l(p+1)} - 2\mu \\
                    &\geq \frac{\eta}{n^2} - 2\mu, \\
\end{split}
\end{equation}
and therefore 
\[
    v^{(l)} - v^{(p+1)} \geq \frac{\eta}{\mu n^2} - 2 > 2,
\]
because we assumed $\eta > 4\mu n^2$. This is a contradiction since $(v^{(l)} - v^{(p+1)}) \leq 2 \|v\|_\infty \leq 2$. Finally, the second part of the lemma is a consequence of \eqref{eq:grad_fmu} and $\|v\|_\infty \leq 1$.
\qed
\end{proof}

We also need the following lemma for relating the values of $\hat{f}$ and $f_\mu$.

\begin{lemma}\label{lemma:value_fmu}
    Let $\mu > 0$ and $x \in \reals^n$. Then
    $f_\mu(x) \geq \hat{f}(x) - \mu$.
\end{lemma}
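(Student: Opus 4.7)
The plan is to exploit the fact that $\hat f(x)=\|x-x_*\|_\infty$ is $1$-Lipschitz. Since $\|\cdot\|_\infty\leq\|\cdot\|_2$, the reverse triangle inequality gives
\begin{equation*}
|\hat f(u)-\hat f(x)|=\bigl|\|u-x_*\|_\infty-\|x-x_*\|_\infty\bigr|\leq\|u-x\|_\infty\leq\|u-x\|,
\end{equation*}
so $\hat f(u)\geq\hat f(x)-\|u-x\|$ for every $u\in\reals^n$.

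Plugging this into the definition \eqref{eq:def_fmu} of $f_\mu$, I would write
\begin{equation*}
f_\mu(x)\geq\min_{u\in\reals^n}\Bigl\{\hat f(x)-\|u-x\|+\tfrac{1}{2\mu}\|u-x\|^2\Bigr\}=\hat f(x)+\min_{t\geq 0}\Bigl\{-t+\tfrac{t^2}{2\mu}\Bigr\},
\end{equation*}
after the substitution $t=\|u-x\|$. The scalar minimization is solved by $t=\mu$, yielding the value $-\mu/2$. Hence $f_\mu(x)\geq\hat f(x)-\mu/2\geq\hat f(x)-\mu$, which is the claimed inequality (in fact with a factor of $2$ to spare).

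There is no real obstacle here; the only step that deserves care is justifying the $1$-Lipschitz bound of $\hat f$ with respect to the Euclidean norm (rather than the $\ell_\infty$ norm in which it is naturally defined), which follows from the elementary inequality $\|\cdot\|_\infty\leq\|\cdot\|_2$. The rest is a one-dimensional quadratic minimization.
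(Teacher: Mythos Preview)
Your argument is correct and in fact yields the sharper bound $f_\mu(x)\geq \hat f(x)-\mu/2$. It differs from the paper's proof: there, one writes $y=\mathrm{prox}_{\hat f}^\mu(x)$, drops the quadratic term to get $f_\mu(x)\geq \hat f(y)$, applies the reverse triangle inequality in $\|\cdot\|_\infty$, and then bounds $\|x-y\|_\infty\leq \mu$ via the optimality condition $\mu^{-1}(x-y)\in\partial\hat f(y)$ together with Lemma~\ref{lemma:subdiff_fhat}. By contrast, you bypass the prox characterization and Lemma~\ref{lemma:subdiff_fhat} entirely, using only the $1$-Lipschitzness of $\hat f$ in the Euclidean norm and a direct scalar minimization; keeping the quadratic term is precisely what buys you the extra factor of~$2$. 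Your route is slightly more elementary and self-contained, while the paper's route makes the role of the prox point explicit, which ties in with how $\nabla f_\mu$ is handled elsewhere.
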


\begin{proof}
Write $y = \text{prox}^\mu_{\hat{f}}(x)$. By definition of $f_\mu$ and the proximal map we have
\begin{equation*}
\begin{split}
    f_\mu(x) &= \hat{f}(y) + \frac{1}{2\mu} \|y - x\|^2 \\
    &\geq \hat{f}(y) \\
    &= \|y-x_*\|_\infty \\
    &\geq \|x - x_*\|_\infty - \|x - y\|_\infty.
    \end{split}
\end{equation*}
    Recall the optimality conditions defining the proximal map can be written as
    \[\mu^{-1}(x-y) \in \partial f(y),\]
    and, since all subgradients of $\hat{f}$ have coordinates smaller than 1 (Lemma \ref{lemma:subdiff_fhat}), we reach $\|x-y\|_\infty \leq \mu$. It follows that
    $
    f_\mu(x) \geq \|x-x_*\|_\infty - \|x-y\|_\infty \geq \|x-x_*\|_\infty - \mu = \hat{f}(x) - \mu$, which concludes the proof.
    \qed
\end{proof}

\paragraph{Choosing the kernel.} As for the objective function $f_\mu$, let us pick a \text{family} of kernels $h_\mu$, whose behavior approach those of a nonsmooth function as $\mu \rightarrow 0$.

Let us first define a unidimensional convex function $\phi_\mu: \reals \rightarrow \reals$ by

\begin{equation*}
    \phi_\mu(t) = \left\{
    \begin{array}{ll}
        t -\mu/2   & \mbox{ if  } t \geq 
        \mu, \\
        \frac{1}{2\mu} t^2 & \mbox{ elsewhere}.
    \end{array}
\right.
\end{equation*}\modif{Note that $\phi_\mu$ is sometimes known as the \textit{Huber function}, which is a smooth approximation of the absolute value and also appears as a worst-case function for first-order methods in $L$-smooth minimization~\cite{Taylor2017}.} 

Define $d_\mu : \reals^n \rightarrow \reals$ through  
\begin{equation}\label{eq:def_dmu}
    d_\mu(x) = \frac{\mu}{2}\|x\|^2 + \sum_{i=1}^n \phi_\mu(x^{(i)}), \:x \in \reals^n.
\end{equation}
$d_\mu$ is a differentiable strictly convex function, whose gradient satisfies, for $x\in \reals^n$ and $i \in \{1\dots n\}$,
\[\nabla d_\mu(x)^{(i)} = \mu x^{(i)} + \min(1,x^{(i)}/\mu).\]
From the expression above, we can deduce two crucial properties that we need in the sequel: for $x \in \reals^n$ and $i \in \{1\dots n\}$, we have 
\begin{align}
        \label{eq:cond_dmu1}
      &\nabla d_\mu(x)^{(i)} = 0 \quad\text{if and only if}\quad x^{(i)} = 0,\\
      \label{eq:cond_dmu2}
      |&\nabla d_\mu(x)^{(i)}| \leq 1 \quad \,\,\, \text{implies} \quad \quad \, |x^{(i)}| \leq \mu.
\end{align}
Let $L > 0$. We define the kernel $h_\mu$ for $x \in \reals^n$ as
\begin{equation}\label{eq:def_hmu}
    h_\mu(x) = \frac{1}{L}\left(f_\mu(x) + d_\mu(x)\right).
\end{equation}
By construction, $L h_\mu - f_\mu$ is convex, so the relative smoothness property holds. It is easy to see that Assumption~\ref{assumption_bpg} is satisfied as $h_\mu$ is strongly convex, so we have $(f_\mu, h_\mu) \in \mathcal{B}_L(\reals^n)$.

\paragraph{Proving the zero-preserving property of the oracles.} Now that the functions are defined, we are ready to prove that all oracles involved in the Bregman algorithm allow to discover \textit{only one dimension per oracle call.}

\begin{proposition}[Zero-preserving property of $\nabla f_\mu,\nabla h_\mu, \nabla h_\mu^*$]\label{prop:zero_pres}
    Assume that $\mu \in (0,1)$ and $\eta > 4 \mu n^2$.
    Let $p \in\{ 0\dots n-1\}$, and $x \in \reals^n \cap E_p$ a vector supported by the $p$ first coordinates. Then
    \[\nabla f_\mu(x),\nabla h_\mu(x),\nabla h_\mu^*(x) \in E_{p+1}.\]
        %  \begin{enumerate}[label = (\roman*)]
        %     \item $\nabla f_\mu(x) \in E_{p+1}$
        %     \item $\nabla h_\mu(x) \in E_{p+1}$
        %     \item $\nabla h_\mu(x)^* \in E_{p+1}$
        % \end{enumerate}
\end{proposition}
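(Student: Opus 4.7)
The plan is to treat the three claims $\nabla f_\mu(x) \in E_{p+1}$, $\nabla h_\mu(x) \in E_{p+1}$ and $\nabla h_\mu^*(x) \in E_{p+1}$ in order of increasing difficulty, since each one builds on the previous. Since $x \in E_p$, in particular $x^{(i)} = 0 \leq \mu$ for every $i \geq p+1$, so Lemma~\ref{lemma:f_zero_pres} applied directly to $x$ yields the first claim $\nabla f_\mu(x) \in E_{p+1}$.

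For $\nabla h_\mu(x)$, I would recall the decomposition $L h_\mu = f_\mu + d_\mu$, so that $\nabla h_\mu(x) = \frac{1}{L}(\nabla f_\mu(x) + \nabla d_\mu(x))$. We already have $\nabla f_\mu(x) \in E_{p+1}$; for $\nabla d_\mu(x)$ we use the separability of $d_\mu$ and property~\eqref{eq:cond_dmu1}: since $x^{(i)} = 0$ for all $i \geq p+1$, we get $\nabla d_\mu(x)^{(i)} = 0$ for the same indices, so $\nabla d_\mu(x) \in E_p \subset E_{p+1}$. Summing gives the second claim.

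The main obstacle is the third claim, which concerns the inverse mirror map and requires a bootstrapping argument. Let $y := \nabla h_\mu^*(x)$, so that $\nabla h_\mu(y) = x$, i.e.\ $\nabla f_\mu(y) + \nabla d_\mu(y) = Lx$. Since $x \in E_p$, for every $i \geq p+1$ we have $(Lx)^{(i)} = 0$, and hence $\nabla d_\mu(y)^{(i)} = -\nabla f_\mu(y)^{(i)}$. The key observation is that $\|\nabla f_\mu\|_\infty \leq 1$ holds on all of $\reals^n$: by the Moreau formula (Lemma~\ref{lemma:fmu}) and the optimality condition \eqref{eq:prox_cond} from the proof of Lemma~\ref{lemma:f_zero_pres}, $\nabla f_\mu(y)$ is a subgradient of $\hat f$ at $\mathrm{prox}_{\hat f}^\mu(y)$, which by Lemma~\ref{lemma:subdiff_fhat}(i) has $\ell_\infty$-norm at most $1$. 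Consequently $|\nabla d_\mu(y)^{(i)}| \leq 1$ for $i \geq p+1$, and property~\eqref{eq:cond_dmu2} yields $|y^{(i)}| \leq \mu$ for all such $i$.

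At this point the hypothesis of Lemma~\ref{lemma:f_zero_pres} is satisfied by $y$ (with the same $p$), so $\nabla f_\mu(y) \in E_{p+1}$. Going back to the identity $\nabla d_\mu(y)^{(i)} = -\nabla f_\mu(y)^{(i)}$ for $i \geq p+1$, this forces $\nabla d_\mu(y)^{(i)} = 0$ for all $i \geq p+2$. Invoking~\eqref{eq:cond_dmu1} a second time we conclude that $y^{(i)} = 0$ for $i \geq p+2$, i.e.\ $y = \nabla h_\mu^*(x) \in E_{p+1}$, completing the proposition. The delicate point is thus the two-step use of the $\ell_\infty$-bound on $\nabla f_\mu$: first to force the last coordinates of $y$ into the regime $|y^{(i)}| \leq \mu$ so that Lemma~\ref{lemma:f_zero_pres} becomes applicable, and then to exploit the strict equivalence in~\eqref{eq:cond_dmu1} to push the last coordinates all the way to zero.
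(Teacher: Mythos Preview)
Your proof is correct and follows essentially the same approach as the paper's own argument. If anything, your explicit justification that $\|\nabla f_\mu\|_\infty \leq 1$ holds on all of $\reals^n$ (via the Moreau formula and Lemma~\ref{lemma:subdiff_fhat}) is slightly cleaner than the paper's invocation of ``the second part of Lemma~\ref{lemma:f_zero_pres}'' for $z$ at a stage where the hypothesis of that lemma on $z$ has not yet been established.
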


\begin{proof}
    Let $x\in E_p$. Then $x$ satisfies the assumption of Lemma \ref{lemma:f_zero_pres} which proves that $\nabla f_\mu(x) \in E_{p+1}$. By Property \eqref{eq:cond_dmu1} of $d_\mu$, we also have that $\nabla d_\mu(x) \in E_{p}$, which allows us to conclude that 
    \[\nabla h_\mu(x) = L^{-1}\left(\nabla f_\mu(x) + \nabla d_\mu \left(x\right) \right) \in E_{p+1}.\]
    It remains to prove the result for $\nabla h_\mu^*(x)$. Write $z = \nabla h_\mu^*(x)$, which amounts to say that $\nabla h_\mu(z) = x$, that is
    \[
    \nabla f_\mu(z) + \nabla d_\mu(z) = Lx
    \]
    using~\eqref{eq:def_hmu}. Let $l \in \{p+1 \dots n\}$. We have $x \in E_p$, hence the $l-th$ coordinate of $x$ is zero and
    \[
    \nabla f_\mu(z)^{(l)} + \nabla d_\mu(z)^{(l)} = 0.
    \] 
    Using the second part of Lemma \ref{lemma:f_zero_pres}, we have that $\|\nabla f_\mu(z)\|_\infty \leq 1$; it follows that
      \[
      | \nabla d_\mu(z)^{(l)}| \leq 1,
      \]
      which implies that $|z^{(l)}| \leq \mu$, by property \eqref{eq:cond_dmu2} of $d_\mu$. Since this holds for any $l \geq p+1$, we have established
      \[\max_{l=p+1,\dots,n} |z^{(l)}| \leq \mu.\]
      Applying Lemma \ref{lemma:f_zero_pres} to $z$, we obtain that  $\nabla f_\mu(z) \in E_{p+1}$.
    Remembering that $\nabla h_\mu(z) = x \in E_p$ by construction, we get
    \[
    \nabla d_\mu(z) = L \nabla h_\mu(z) - \nabla f_\mu(z) \in E_{p+1}.
    \] 
    By Property \eqref{eq:cond_dmu1} of $d_\mu$, it follows that $z \in E_{p+1}$, which concludes the proof.
    \qed
\end{proof}
 We can now use Proposition \ref{prop:zero_pres} inductively to state a lower bound on the performance of any Bregman gradient algorithm applied to $(f_\mu,h_\mu)$.

\begin{proposition} \label{prop:performance}
Let $N \geq 1$ and choose the dimension $n = 2N+1$.
Let $\mu \in (0,1)$ and $\eta > 4 \mu n^2 $. Consider the functions $f_\mu, h_\mu : \reals^n \rightarrow \reals$ defined in \eqref{eq:def_fmu} and \eqref{eq:def_hmu} respectively. Then, for any Bregman gradient method satisfying Definition \ref{def:algorithm}, applied to $(f_\mu,h_\mu)$ and initialized at $x_0 = (0,\dots 0)$, the output $\overx$ returned after performing at most $N$ calls to each one of the primal and mirror oracles satisfies
% and which performs at most $N$ calls to each one of the primal and mirror oracles, the output $\overline{x}$ satisfies
\[    f_\mu(\overline{x}) - \min_{\reals^n}f_\mu \geq \frac{LD_{h_\mu}(x_*,x_0)}{2N+1} \cdot \frac{1 - \mu}{1 + \mu + \eta + \frac{\mu}{2}(1+ \eta)^2}.
\]
\end{proposition}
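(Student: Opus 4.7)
The plan has three ingredients: an induction showing each oracle call adds at most one new coordinate, a lower bound on $f_\mu(\overline{x}) - \min f_\mu$ coming from the unexplored last coordinate, and an explicit upper bound on $LD_{h_\mu}(x_*, x_0)$.

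First I would prove by induction on $t$ that $\text{Span}(\mathcal{V}_t) \subset E_t$. The base case is immediate since $x_0 = 0 \in E_0$. For the inductive step, the query point $y_t \in \text{Span}(\mathcal{V}_t) \subset E_t$; Proposition \ref{prop:zero_pres} (whose hypothesis $\eta > 4\mu n^2$ is part of the standing assumption) then ensures that each of $\nabla f_\mu(y_t)$, $\nabla h_\mu(y_t)$, $\nabla h_\mu^*(y_t)$ lies in $E_{t+1}$, so $\text{Span}(\mathcal{V}_{t+1}) \subset E_{t+1}$ regardless of which oracle is invoked. After at most $N$ primal and $N$ mirror calls, $\overline{x} \in \text{Span}(\mathcal{V}_{2N}) \subset E_{2N}$, so $\overline{x}^{(n)} = 0$ with $n = 2N+1$. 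Since $\min f_\mu = \min \hat{f} = 0$ by Lemma \ref{lemma:fmu}, Lemma \ref{lemma:value_fmu} yields
\[ f_\mu(\overline{x}) - \min f_\mu \;\geq\; \hat{f}(\overline{x}) - \mu \;\geq\; |0 - x_*^{(n)}| - \mu \;=\; 1 + \eta/n - \mu \;\geq\; 1 - \mu. \]

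Next I would bound $LD_{h_\mu}(x_*, x_0) = D_{f_\mu}(x_*, x_0) + D_{d_\mu}(x_*, x_0)$ by computing the proximal map at the origin explicitly. Since $\mu < \eta/2$ (implied by $\eta > 4\mu n^2$) and $x_*^{(1)} = 1+\eta$ is strictly the largest coordinate of $x_*$, the point $u = \mu e_1$ satisfies the first-order condition for $\text{prox}^\mu_{\hat{f}}(0)$: one checks that $\partial \hat{f}(\mu e_1) = \{-e_1\}$, because the max in the definition of $\hat{f}$ at $\mu e_1$ is attained only at the first coordinate, giving $-\mu^{-1}u = -e_1 \in \partial \hat{f}(u)$. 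Hence $\nabla f_\mu(0) = -e_1$ and $f_\mu(0) = 1+\eta-\mu/2$, so $D_{f_\mu}(x_*, x_0) = -f_\mu(0) + x_*^{(1)} = \mu/2$. Since $d_\mu(0) = 0$, $\nabla d_\mu(0) = 0$, and $x_*^{(i)} > \mu$ for every $i$, the second term collapses to $D_{d_\mu}(x_*, x_0) = d_\mu(x_*) = \tfrac{\mu}{2}\|x_*\|^2 + \|x_*\|_1 - \tfrac{n\mu}{2}$. Bounding $\|x_*\|_1 \leq n(1+\eta)$ and $\|x_*\|^2 \leq n(1+\eta)^2$ then gives
\[ LD_{h_\mu}(x_*, x_0) \;\leq\; n\bigl(1 + \mu + \eta + \tfrac{\mu}{2}(1+\eta)^2\bigr). \]

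Combining both bounds yields
\[ f_\mu(\overline{x}) - \min f_\mu \;\geq\; 1-\mu \;\geq\; \frac{LD_{h_\mu}(x_*, x_0)}{n}\cdot\frac{1-\mu}{1 + \mu + \eta + \tfrac{\mu}{2}(1+\eta)^2}, \]
which is the statement after substituting $n = 2N+1$. The main delicate step is the proximal map evaluation: one must rule out that any other candidate beats $u = \mu e_1$, whether by perturbing along $e_1$ or by activating additional coordinates. This rests precisely on the regime $\mu \ll \eta$ imposed by the hypothesis, which makes $x_*^{(1)} = 1+\eta$ strictly dominate the other entries of $x_*$ so that the proximal optimum concentrates on the first coordinate.
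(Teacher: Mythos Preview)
Your argument is correct and follows the same three-part architecture as the paper: the induction via Proposition~\ref{prop:zero_pres} to confine $\overline{x}$ to $E_{2N}$, the lower bound $f_\mu(\overline{x})\geq 1-\mu$ via Lemma~\ref{lemma:value_fmu}, and the decomposition $LD_{h_\mu}(x_*,x_0)=D_{f_\mu}(x_*,x_0)+D_{d_\mu}(x_*,x_0)$.

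The one point of divergence is in how you control $D_{f_\mu}(x_*,x_0)$. The paper never computes the proximal map at $0$; it simply invokes Lemma~\ref{lemma:value_fmu} to bound $-f_\mu(0)\leq -\hat f(0)+\mu=-(1+\eta)+\mu$ and Lemma~\ref{lemma:f_zero_pres} to bound $|\langle\nabla f_\mu(0),x_*\rangle|\leq 1+\eta$, obtaining $D_{f_\mu}(x_*,x_0)\leq \mu$. You instead verify directly that $\mathrm{prox}^\mu_{\hat f}(0)=\mu e_1$ (the check that the maximum in $\hat f$ is attained uniquely at the first coordinate under $\eta>2\mu$ is valid), which yields the exact value $D_{f_\mu}(x_*,x_0)=\mu/2$. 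Your route is slightly sharper and more explicit, at the cost of having to justify the proximal computation; the paper's route is softer and reuses already-established lemmas. Either feeds into the same final bound, since the paper absorbs its $\mu$ (and you your $\mu/2$) into the $n\mu$ term of the denominator.
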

\begin{proof} The zero-preserving property and the structure of Bregman gradient algorithms described in Definition \ref{def:algorithm} implies that the set of primal points $\mathcal{X}_t$ and dual points $\mathcal{Y}_t$ at iteration $t$ are supported by the  $t$ first coordinates, i.e., \[
\mathcal{X}_t, \mathcal{Y}_t \subset E_t.
\]
Indeed, since we initialized $\mathcal{X}_0 = \{x_0\} \subset E_0$, this follows by induction.
\modif{Assume that at time $t$, we have $\mathcal{X}_t, \mathcal{Y}_t \subset E_t$. If the primal oracle is chosen at iteration $t+1$, since the query point $x_{t+1}$ is taken as a linear combination of points in $\mathcal{X}_t$ it also lies in $E_t$, and thus Proposition \ref{prop:zero_pres} states that the new dual vectors $\nabla f_\mu(x_{t+1}),\nabla h_\mu(x_{t+1})$ belong to $E_{t+1}$. 
If, on the other hand, the mirror oracle is chosen, then with the same argument we have that $y_{t+1} \in E_t$ an by Proposition \ref{prop:zero_pres} that $\nabla h_\mu^*(y_{t+1}) \in E_{t+1}$}.

Now, because the algorithm has called at most $N$ times each oracle, it has performed at most $2N$ steps and thus the output point satisfies $\overline{x} \in E_{2N}$, which means that $\overline{x}^{(2N+1)} = 0$.

We use Lemma \ref{lemma:value_fmu} to relate $f_\mu(\overline{x})$ and $\hat{f}(\overline{x})$. Recalling that $\min f_\mu= \hat{f}_* = 0$, we get
\begin{equation}\label{eq:f_low_bound}
\begin{split}
    f_\mu(\overline{x}) - \min_{\reals^n}f_\mu &= f_\mu(\overline{x})\\
    &\geq \hat{f}(\overx)- \mu\\
    &\geq  | \overline{x}^{(2N+1)}- x_*^{(2N+1)}| - \mu \\ 
    & = 1 + \frac{\eta}{2N+1} - \mu \\
    &\geq 1 - \mu,
\end{split}
\end{equation}
where we used the definition of $\hat{f}$ and the fact that $\overline{x}^{(2N+1)} = 0$. 

Let us now upper bound the initial diameter. Remembering that $Lh_\mu = f_\mu + d_\mu$ in~\eqref{eq:def_hmu}, we have
\[
LD_{h_\mu}(x_*,x_0) = D_{f_\mu}(x_*,x_0) + D_{d_\mu}(x_*,x_0).
\] 
by definition of the Bregman distance. To deal with the first term, we recall that $f_\mu(x_*) = 0$ and write
\begin{equation*}
    \begin{split}
        D_{f_\mu}(x_*,x_0) &= f_\mu(x_*) - f_\mu(x_0) - \la \nabla f_\mu(x_0),x_*-x_0 \ra\\
        &= - f_\mu(x_0) - \la \nabla f_\mu(x_0),x_*-x_0 \ra\\
        &\leq -\hat{f}(x_0) + \mu - \la \nabla f_\mu(x_0),x_*-x_0 \ra\\
        &= - 1 -\eta + \mu - \la \nabla f_\mu(x_0),x_*-x_0 \ra,
    \end{split}
\end{equation*}
where we used again Lemma \ref{lemma:value_fmu} at $x_0 = (0,\dots,0)$. Now, Lemma \ref{lemma:f_zero_pres} applies to $x_0$ with $p = 0$ and allows to state that $\nabla f_\mu(x_0) \in E_1$ and that $\|\nabla f_\mu(x_0)\|_\infty \leq 1$. Therefore 
\[|\la \nabla f_\mu(x_0),x_*-x_0 \ra| =|\nabla f_\mu(x_0)^{(1)}\,(x_*^{(1)} - x_0^{(1)})|  \leq |x_*^{(1)} - x_0^{(1)}| = 1 + \eta.\]
Hence we have the following upper bound
\begin{equation}\label{eq:bound_dfmu}
\begin{split}
    D_{f_\mu}(x_*,x_0) &\leq  -1 -\eta + \mu + | \la \nabla f_\mu(x_0),x_*-x_0 \ra| \\
    &\leq \mu.
    \end{split}
\end{equation}

The second term can be directly computed from Definition \eqref{eq:def_dmu} of $d_\mu$, recalling that $x_*^{(i)} \geq 1 \geq \mu$ for $i \in \{0\dots n\}$,
\begin{equation}\label{eq:dh_upp_bound}
\begin{split}
        D_{d_\mu}(x_*,x_0)&= d_\mu(x_*) - d_\mu(x_0) - \la \nabla d_\mu(x_0), x_* - x_0 \ra \\
        &= d_\mu(x_*)\\
        &= \sum_{k=1}^{2N+1} \left[\frac{\mu}{2}(1+\frac{\eta}{k})^2 + 1 + \frac{\eta}{k} - \frac{\mu}{2}\right] \\
        & \leq (2N+1) \left[ \frac{\mu}{2}(1+ \eta)^2 + \eta + 1  \right].
\end{split}    
\end{equation}
Combining \eqref{eq:bound_dfmu} and \eqref{eq:dh_upp_bound} gives
\begin{equation*}
    \begin{split}
        LD_{h_\mu}(x_*,x_0) &= D_{f_\mu}(x_*,x_0) + D_{d_\mu}(x_*,x_0)\\ 
        &\leq \mu + (2N+1) \left[ \frac{\mu}{2}(1+ \eta)^2 + \eta + 1  \right]\\
        &\leq (2N+1) \left[\mu + \frac{\mu}{2}(1+ \eta)^2 + \eta + 1  \right].
    \end{split}    
\end{equation*}
This bound, along with \eqref{eq:f_low_bound}, yields
\begin{equation*}
    f_\mu(\overline{x}) - \min_{\reals^n}f_\mu\geq 1 - \mu \geq \frac{L D_{h_\mu}(x_*,x_0)}{2N+1} \cdot \frac{1 - \mu}{1 + \mu + \eta + \frac{\mu}{2}(1+ \eta)^2}
\end{equation*}
whence the desired result.
\qed
\end{proof}

Since constants $\mu,\eta$ can be taken arbitrarily small, we now use Proposition \ref{prop:zero_pres} to show that the bound can be approached to any precision and thus prove our main result.

\begin{theorem}[Lower complexity bound for $\mathcal{B}_L$]\label{thm:low_bound}
Let $N\geq 1$, a precision $\epsilon \in (0,1)$ and let ${x_0 \in \reals^{2N+1}}$ be a starting point. Then, there exist functions $(f,h) \in \mathcal{B}_L(\reals^{2N+1})$ such that for any Bregman gradient method $\mathcal{A}$ satisfying Definition \ref{def:algorithm} and initialized at $x_0$, the output $\overx$ returned after performing at most $N$ calls to each one of the primal and mirror oracles satisfies
\begin{equation*}
    f(\overline{x}) - \min_{\reals^{2N+1}}f \geq \frac{LD_{h}(x_*, x_0)}{2N+1} \cdot(1-\epsilon).
\end{equation*}
\end{theorem}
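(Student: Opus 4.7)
The heavy lifting has already been done in Proposition~\ref{prop:performance}, which for the specific initialization $x_0=(0,\dots,0)$ and parameters satisfying $\mu\in(0,1)$, $\eta>4\mu n^2$ with $n=2N+1$ produces functions $(f_\mu,h_\mu)\in\mathcal{B}_L(\reals^{2N+1})$ such that
\[
f_\mu(\overx)-\min f_\mu \;\geq\; \frac{LD_{h_\mu}(x_*,x_0)}{2N+1}\cdot\frac{1-\mu}{1+\mu+\eta+\tfrac{\mu}{2}(1+\eta)^2}.
\]
My plan is thus to derive Theorem~\ref{thm:low_bound} from this by (i) tuning $\mu$ and $\eta$ so that the deterioration factor becomes at least $1-\epsilon$, and (ii) lifting the restriction $x_0=0$ by a translation argument.

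For step (i), I would fix $N$ and $\epsilon$, then observe that the map $(\mu,\eta)\mapsto \frac{1-\mu}{1+\mu+\eta+\frac{\mu}{2}(1+\eta)^2}$ is continuous and tends to $1$ as $(\mu,\eta)\to(0,0)$. Hence one can choose $\eta\in(0,1)$ small enough, and then $\mu\in(0,\eta/(4n^2))$ small enough (this simultaneously enforces $\mu\in(0,1)$ and the hypothesis $\eta>4\mu n^2$ of Proposition~\ref{prop:performance}), so that the factor exceeds $1-\epsilon$. This is a one-line quantitative check, not a real obstacle.

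For step (ii), given an arbitrary starting point $x_0\in\reals^{2N+1}$, I would define shifted functions
\[
f(x) := f_\mu(x-x_0+\mathbf{0}), \qquad h(x) := h_\mu(x-x_0+\mathbf{0}),
\]
i.e.\ translate the construction of Proposition~\ref{prop:performance} so that the ``origin'' $\mathbf{0}$ used there is moved to $x_0$. Translation preserves convexity, differentiability, strict convexity, the Legendre property, and the $h$-smoothness inequality $Lh-f\geq 0$ (each of which is invariant under pre-composition with a translation), so $(f,h)\in\mathcal{B}_L(\reals^{2N+1})$. Moreover $D_h(x_*,x_0)=D_{h_\mu}(x_*-x_0+\mathbf{0},\mathbf{0})$, so the Bregman distance appearing in the bound is unchanged. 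Finally, a Bregman gradient algorithm satisfying Assumption~\ref{assumption:algorithm} applied to $(f,h)$ from $x_0$ corresponds, via the change of variables $u\mapsto u-x_0$, to such an algorithm applied to $(f_\mu,h_\mu)$ from the origin; one just has to check that the span/linear-combinations structure in Assumption~\ref{assumption:algorithm} transports correctly under this affine change of variables, which it does since the procedure generates iterates by affine combinations of oracle outputs and the shifted primal and mirror oracles evaluated at $y$ coincide with the originals evaluated at $y-x_0$. Applying Proposition~\ref{prop:performance} in the shifted coordinates and combining with the parameter choice from step~(i) yields
\[
f(\overx)-\min_{\reals^{2N+1}} f \;\geq\; \frac{L D_h(x_*,x_0)}{2N+1}\cdot(1-\epsilon),
\]
which is the claim. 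The only mildly delicate point I anticipate is formalising that the algorithm class is invariant under translation of the initialization; everything else is bookkeeping.
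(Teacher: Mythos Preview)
Your proposal is correct and follows essentially the same route as the paper's own proof: invoke Proposition~\ref{prop:performance}, tune $(\mu,\eta)$ so that the deterioration factor exceeds $1-\epsilon$, and reduce an arbitrary $x_0$ to the origin by translation. The only cosmetic difference is that the paper fixes explicit values $\eta=\epsilon/4$ and $\mu=\epsilon/(20n^2)$ and checks the inequality by hand, whereas you appeal to continuity of the factor as $(\mu,\eta)\to(0,0)$; both are fine. For the translation step the paper simply asserts ``the class $\mathcal{B}_L(\reals^n)$ is invariant by translation, so WLOG $x_0=0$'' without further justification, so your treatment is already at least as careful---though note that Assumption~\ref{assumption:algorithm} uses a \emph{linear} span rather than affine combinations, so your phrase ``affine combinations of oracle outputs'' is not quite accurate; the paper does not address this subtlety either.
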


\begin{proof}
Consider a number $N$ of oracle calls and a target precision $\epsilon \in (0,1)$.
Choose the functions $f_\mu, h_\mu$ defined respectively in Equations \eqref{eq:def_fmu} and \eqref{eq:def_hmu} on $\reals^n$ with $n = 2N+1$. These functions satisfy Assumption~\ref{assumption_bpg}, since their domain is $\reals^n$, they are convex, differentiable, and $h_\mu$ is strongly convex. Moreover, relative smoothness holds because $Lh_\mu - f_\mu = d_\mu$ is convex by construction. Hence $(f_\mu,h_\mu) \in \mathcal{B}_L(\reals^{n})$.

Because the class of problems $\mathcal{B}_L(\reals^n)$ is invariant by translation, we can assume without loss of generality that the algorithm is initialized at $x_0 = (0,\dots 0)$.
Recall that the only conditions our analysis imposed on the parameters $\eta,\mu$ are that $\mu \in (0,1)$ and $\eta > 4 \mu n^2$.

Let us then choose $\eta = \epsilon/4$ and $\mu = \eta/(5n^2) = \epsilon / (20n^2)$. Under these conditions, Proposition \ref{prop:performance} applies and gives that for any point $\overline{x}$ returned by a Bregman gradient algorithm that is initialized at $x_0$ and which performs at most $N$ calls to each oracle we have

\[
    f_\mu(\overline{x}) - \min_{\reals^{2N+1}}f_\mu \geq \frac{LD_{h_\mu}(x_*, x_0)}{2N+1} \cdot \frac{1 - \mu}{1 + \mu + \eta + \frac{\mu}{2}(1+\eta)^2}.
\]
The last term can be bounded from below, using our choice of $\mu,\eta$, and the fact that $\eta < 1$, as 
% \[
% \begin{split}
%     \frac{1 - \mu}{1 + \eta + \mu + \frac{\mu}{2}(1+\eta)^2} &\geq \frac{1 - \mu}{1 + \eta + 3\mu }\\
%     &= \frac{1-\frac{\epsilon}{20n^2}}{1  + \frac{\epsilon}{4} + \frac{3\epsilon}{20n^2}}\\
%     &\geq \frac{1-\epsilon/2}{1+\epsilon/2}\\
%     & \geq 1 - \epsilon
% \end{split}
% \]
\[
    \frac{1 - \mu}{1 + \eta + \mu + \frac{\mu}{2}(1+\eta)^2} \geq \frac{1 - \mu}{1 + \eta + 3\mu }= \frac{1-\frac{\epsilon}{20n^2}}{1  + \frac{\epsilon}{4} + \frac{3\epsilon}{20n^2}} \geq 1 - \epsilon
\]
yielding the desired result.
\qed
\end{proof}

% Idem, with the alternative assumption we can say
\begin{remark}
One can refine the result above in the case where the primal and mirror oracles are not used the same number of times. Indeed, if the primal oracles are called $N_1$ times and the mirror oracle is called $N_2$ times, then the same reasoning shows that the lower bound remains true by replacing $2N$ with $N_1+N_2$.
% \begin{remark}

Our lower bound involves the relative smoothness constant $L$ instead of the step size $\lambda$ in~\eqref{eq:comp_estimate}, but it is equivalent (up to a factor 2) when choosing $\lambda = 1/L$, which is actually the best possible step size choice. \modif{This shows the optimality of NoLips within  the class of Bregman first-order methods (up to a universal constant).}
\end{remark}

% \subsection{Discussion}
% related with Guzman-Nemirovskii

% l_1/l inf setup and conditional gradients
\textit{Connection with Conditional Gradient and the $\ell_\infty$ setting.} 
\modif{
The worst-case function used for the lower bound involves the smoothing of an $\ell_\infty$ norm. As pointed out by one of the referees, there might be a connection between the hardness of the relatively-smooth setting and the lower bound for smooth minimization on the $\ell_\infty$ ball as done in Guzman and Nemirovskii \cite{Guzman2015}. This lower bound, which is also $O(1/k)$, is used by the authors to prove that the rate of the Conditional Gradient algorithm is near-optimal in this setting.

It might be insightful to examine connections between these settings in future works, for example by exploiting duality between Bregman gradient methods and Conditional Gradient, as in~\cite{Bach2015}.
}

\section{Computer-aided performance analyses of Bregman first-order methods}\label{s:pep}

    In this section, we extend the computer-aided performance estimation framework in~\cite{Drori2014,Taylor2015} to the setting of Bregman methods. In short, these results show how to compute the worst-case convergence rate of a given algorithm by solving a numerical optimization problem, called performance estimation problem~(PEP). Solving a PEP offers several benefits, including:
\begin{enumerate}
    \item Computing (numerically) the \textit{exact} worst-case complexity of an algorithm on a given class of problems after a fixed number of iterations.
    \item Studying the corresponding worst-case functions.
    \item Inferring an analytical worst-case guarantee by obtaining a feasible point to the dual PEP. Such dual feasible points correspond to combinations of inequalities that certify the convergence bound.
\end{enumerate}

Here, we focus on inferring worst-case functions. \modif{We used this methodology for guessing, and then designing, the lower bound provided in Section~\ref{ss:lower_bound}}. However, solving  PEPs is also useful for proving new convergence rates (see Section \ref{sss:other_bound}), or for getting quick numerical insights into the convergence properties of an algorithm, like for instance on the inertial algorithm IGA \cite{Auslender2006} (Section \ref{ss:iga}).

To use PEPs on Bregman methods, we extend the analysis in~\cite{Drori2014,Taylor2015} to deal with differentiable and/or strictly convex functions. Previous works on the topic modelled differentiability through an $L$-smoothness condition, and strict convexity through strong convexity, which are assumptions that we avoid in the Bregman setting. The key difference in our work is that the classes of differentiable and/or strictly convex functions are \emph{open} sets. Thus, the worst-case functions for this class might lie on the closure of this set and exhibit some pathological nonsmooth behavior.

This section is organized as follows. In Section \ref{ss:pep_intro}, we introduce the PEP framework. Sections \ref{ss:interpolation}-\ref{s:tightpeps} extend PEPs to the Bregman setting. We provide in Section \ref{ss:proofs} several applications, including the procedure used to find the worst-case functions involved in the proof of the general lower bound in Section \ref{ss:lower_bound}.

\subsection{Worst-case scenarios through optimization}\label{ss:pep_intro}
We now formulate the task of finding the worst-case performance of Algorithm \ref{algo:bpg} as an optimization problem. We focus on the analysis of NoLips for simplicity. However, the same ideas are directly applicable to other Bregman-type algorithms like IGA~\cite{Auslender2006} (see Section \ref{ss:iga}) or Bregman proximal point~\cite{Eckstein1993}. 

Recall that we write $\mathcal{B}_L(C)$ for the set of function pairs $(f,h)$ satisfying Assumption~\ref{assumption_bpg}, such that $Lh - f$ is convex on a convex set $C$. For simplicity, we first focus on the case when  functions have full domain, i.e., $C=\reals^n$ for some $n \geq 1$. In this setting, the set $\mathcal{B}_L(\reals^n)$ can be rewritten as 
\begin{equation*}
    \mathcal{B}_L(\reals^n)=  \left\{\begin{array}{l@{\hskip 1em}|@{\hskip 1em}l}
        & f \mbox{ is convex, differentiable and has at least one minimizer,}\\
        &h \mbox{ is strictly convex and differentiable,}\\
        f,h : \reals^n \rightarrow \reals  & Lh - f \mbox{ is convex,}\\
        &\forall \lambda > 0, \,\forall x,p\in \reals^n,\mbox{ the function } u \mapsto \la p,u-x \ra + \frac{1}{\lambda} D_h(u,x)\\
        &\quad \mbox{ has a unique minimizer.}\\
        \end{array}\right\}, 
\end{equation*}
since all constraints in Assumption~\ref{assumption_bpg} about the domains of $f$ and $h$ become irrelevant. The general case when $C$ is a convex subset of $\reals^n$ can be treated along the same approach. In fact, from the perspective of performance estimation, we can show that every problem in $\bl(C)$ can be reduced to some problem in $\bl(\reals^n)$ with equivalent convergence rate (see Appendix~\ref{a:domh} for details).
% The ingredients are exactly the same as the ones introduced below with the ones that are already known for the Euclidean settings (see e.g.,~\cite{Drori2014,Taylor2015,Drori2016,Taylor2017}
% [+``Contributions to the complexity analysis of optimization algorithms''] 
% \comAT{introduire BL ou bien dire d'une autre fa\c{c}on qu'on regarde le cas $C=\mathbb{R}^n$ dans cette section}

\paragraph{Performance estimation problem.} Throughout this section, we fix a number of iterations $N \geq 1$, a relative smoothness parameter $L > 0$, and a step size $\lambda > 0$. In the currently known analyses of NoLips, worst-case guarantees have the following form 
\begin{equation}\label{eq:generic_bound}
    f(x_N)-f(x_*)\leq \theta(N,L,\lambda) D_h(x_*,x_0).\end{equation}
For instance, Theorem \ref{thm:nolips_bound} states this result with $\theta(N,L,\lambda) = 1/(\lambda N)$ when $\lambda \in (0,1/L]$ (note that since we consider the case where $C = \reals^n$, we can take $x_*$ in the bound as $x_* \in \dom h$ trivially). We then naturally seek the smallest $\theta(N,L,\lambda)$ such that the bound \eqref{eq:generic_bound} holds for any couple $(f,h) \in \mathcal{B}_L(\reals^n)$, that is, solve the optimization problem
% \begin{equation}\label{eq:pep_nolips}\tag{PEP}
% \begin{split}
%       \sup_{f,h,x_0,\dots,x_N,x_*,n} & \big(f\left(x_N\right) - f \left(x_*\right)\big) / D_{h}(x_*,x_0)\\%\frac{f(x_N) - f(x_*)}{D_{h}(x_*,x_0)}\\
%         \mbox{subject to   } &(f,h) \in \mathcal{B}_L(\reals^n), \\
%         % & Lh - f \mbox{ is convex,} \\
%         & x_* \mbox{ is a minimizer of $f$}, \\
%         & x_1,\dots,x_N \mbox{ are generated from $x_0$ by Algorithm \ref{algo:bpg} with step size $\lambda$,}
% \end{split}
% \end{equation}
\begin{equation}\label{eq:pep_nolips}\tag{PEP}
\BA{ll}
       \mbox{maximize} & \big(f\left(x_N\right) - f \left(x_*\right)\big) / D_{h}(x_*,x_0)\\
       \rule{0pt}{12pt}
        \mbox{subject to   } &(f,h) \in \mathcal{B}_L(\reals^n), \\
        % & Lh - f \mbox{ is convex,} \\
        & x_* \mbox{ is a minimizer of $f$}, \\
        & x_1,\dots,x_N \mbox{ are generated from $x_0$ by Algorithm \ref{algo:bpg} with step size $\lambda$,}\\
\EA
\end{equation}
in the variables $f,h,x_0,\dots,x_N,x_*,n$. We refer to this problem as a performance estimation problem (PEP). We use the convention $0/0=0$, so that the objective is well defined when $x_* = x_0$. Optimizing over the dimension $n$ to get dimension-free bounds allows for the problem to admit efficient convex reformulations, as we see in the sequel. We look for guarantees that are independent of the kernel $h$, and therefore $h$ is also an optimization variable.

Let us start by simplifying the problem. First, due to the strict convexity of $h$, the NoLips iteration \eqref{eq:iteration_nolips} can be equivalently formulated via the first-order optimality condition
\[ \nabla h(x_{i+1}) = \nabla h(x_i) - \lambda \nabla f(x_i) \quad \forall i \in \{0\dots N-1\}\]
and, since the domain is $\reals^n$, the condition that $x_*$ minimizes $f$ reduces to requiring $\nabla f(x_*) = 0$. Second, the problem is homogeneous in $(f,h)$ (i.e., from a feasible couple $(f,h)$, take any constant $c>0$ and observe that the couple $(cf,ch)$ is also feasible with the same objective value), hence optimizing the objective function $f(x_N)-f(x_*)$ under the additional constraint $D_h(x_*,x_0)=1$ produces the same optimal value as the problem above.

Finally, we use the same argument as in \cite{Drori2014,Taylor2017} and observe that the objective of \eqref{eq:pep_nolips} and the algorithmic constraints mentioned above depend solely on the values of the first-order oracles of $f$ and $h$ at the points $x_0,\dots,x_N,x_*$. Denoting $I = \{0,1,\dots,N,*\}$ the indices associated with the points involved in the problem, we proceed to write these values as
    \begin{equation*}
        \begin{split}
            \{(f_i, g_i)\}_{i \in I} &= \left\{\big(f(x_i), \nabla f(x_i)\big)\right\}_{i \in I}, \\
            \{(h_i, s_i)\}_{i \in I} &= \{\big(h(x_i), \nabla h(x_i)\big)\}_{i \in I}.
        \end{split}
    \end{equation*}
Using those elements, the iterations of NoLips can be expressed as $s_{i+1} = s_i - \lambda g_i$ for $i \in \{0\dots N-1\}$, and the normalization constraint $D_h(x_*,x_0) = 1$ becomes $h_* - h_0  - \la s_0, x_* - x_0 \ra = 1$.

Using those \textit{discrete} representations of $f$ and $h$, we can reformulate \eqref{eq:pep_nolips} as
\begin{equation*}\label{eq:pep_disc}\tag{PEP}
\BA{rcl}
        %\text{val}\eqref{eq:pep_nolips} =
        &  \mbox{maximize} & f_N - f_* \\
        % &\quad \{f_i, g_i, h_i, s_i\}_{i \in I} = \{f(x_i), \nabla f(x_i), h(x_i), \nabla h(x_i)\}_{i \in I},\\
        \rule{0pt}{11pt}
        &\mbox{subject to} &  f_i = f(x_i), g_i = \nabla f(x_i),\\
        && h_i = h(x_i), s_i = \nabla h(x_i),\quad  \mbox{for all $ i \in I$ and some } (f,h) \in \mathcal{B}_L(\reals^n),\\
        &            & g_* = 0,\\
        &            & s_{i+1} = s_i - \lambda g_i \quad \mbox{for } i\in \{1\dots N-1\},\\
        &            &  h_* - h_0  - \la s_0, x_* - x_0 \ra = 1,
\EA
\end{equation*}
in the variables $n,\{(x_i,f_i,g_i,h_i,s_i)\}_{i \in I}$. 
% \begin{equation*}\label{eq:pep_disc}%\tag{PEP-finite}
% \setlength{\jot}{1pt}
% \begin{split}
%       \text{val}\eqref{eq:pep_nolips} = \sup_{n,\{x_i,f_i,g_i,h_i,s_i\}_{i\in I}} & f_N - f_*\\
%         \mbox{subject to: } & \mbox{there exist } (f,h) \in \mathcal{B}_L(\reals^n) \mbox{ for which }\\
%         % &\quad \{f_i, g_i, h_i, s_i\}_{i \in I} = \{f(x_i), \nabla f(x_i), h(x_i), \nabla h(x_i)\}_{i \in I},\\
%         & \quad f_i = f(x_i), g_i = \nabla f(x_i),\\
%         & \quad h_i = h(x_i), s_i = \nabla h(x_i),\\
%                     & g_* = 0,\\
%                     & s_{i+1} = s_i - \lambda g_i \quad \mbox{for } i\in \{1\dots N-1\},\\
%                     &  h_* - h_0  - \la s_0, x_* - x_0 \ra = 1.
%      \end{split}
% \end{equation*}
The equivalence with the initial problem is guaranteed by the first two constraints which are called the \textit{interpolation conditions}.

It turns out that interpolation conditions for the class $\mathcal{B}_L(\reals^n)$ are delicate to establish, due to assumptions on $h$. Fortunately, there exist two classes $\blstrict(\reals^n)$ and $\blnonsmooth(\reals^n)$ for which they can be derived. The first class is a restriction of $\mathcal{B}_L(\reals^n)$ where $f$ and $Lh-f$ are both assumed to be strictly convex:
\[\blstrict(\reals^n) = \mathcal{B}_L(\reals^n) \cap \{(f,h):\reals^n \rightarrow \reals \,|\, f \mbox{ and } Lh-f \mbox{ are strictly convex} \}\]
whereas the second class consists in considering a relaxation with possibly nonsmooth functions:
\[ 
\blnonsmooth(\reals^n) = \{ (f,h) : \reals^n \rightarrow \reals \,|\, f \mbox{ and } Lh-f \mbox{ are convex}\}.
\]
The following inclusions then directly hold \[
\blstrict(\reals^n) \subset \mathcal{B}_L(\reals^n) \subset \blnonsmooth(\reals^n).
\]
With theses classes, we can now define two easier problems. The first one is a restriction of \eqref{eq:pep_nolips} defined on the class $\blstrict(\reals^n)$, under the additional constraint that all iterates are distinct:
% \begin{equation}\label{eq:pep_underestimation}\tag{\underline{PEP}}
% \setlength{\jot}{1pt}
% \begin{split}
%       \sup_{n,\{(x_i,f_i,g_i,h_i,s_i)\}_{i\in I}} & f_N - f_*\\
%         \mbox{subject to: } & \mbox{there exist } (f,h) \in \blstrict(\reals^n) \mbox{ for which }\\
%         % &\quad \{f_i, g_i, h_i, s_i\}_{i \in I} = \{f(x_i), \nabla f(x_i), h(x_i), \nabla h(x_i)\}_{i \in I},\\
%         & \quad f_i = f(x_i),\, g_i = \nabla f(x_i),\\
%         & \quad h_i = h(x_i),\, s_i = \nabla h(x_i),\\
%         & g_* = 0,\\
%         & s_{i+1} = s_i - \lambda g_i \quad \mbox{for } i\in \{1\dots N-1\},\\
%         &  h_* - h_0  - \la s_0, x_* - x_0 \ra = 1,\\
%         & x_i \neq x_j \quad \mbox{for } i \neq j.\\
%      \end{split}
% \end{equation}
\begin{equation}\label{eq:pep_underestimation}\tag{$\underline{\text{PEP}}$}
\BA{rcl}
        &\mbox{maximize} & f_N - f_* \\
        \rule{0pt}{11pt}
        &\mbox{subject to} &  f_i = f(x_i), g_i = \nabla f(x_i),\\
        && h_i = h(x_i), s_i = \nabla h(x_i),\quad  \mbox{for all $ i \in I$ and some } (f,h) \in \blstrict(\reals^n),\\
        &            & g_* = 0,\\
        &            & s_{i+1} = s_i - \lambda g_i \quad \mbox{for } i\in \{1\dots N-1\},\\
        &            &  h_* - h_0  - \la s_0, x_* - x_0 \ra = 1,\\
        &            & x_i \neq x_j \quad \mbox{for } i \neq j \in I,\\
\EA
\end{equation}
in the variables $n,\{(x_i,f_i,g_i,h_i,s_i)\}_{i \in I}$. 
The second problem is a relaxation of \eqref{eq:pep_nolips}, where $(f,h)\in \blnonsmooth(\reals^n)$ are possibly nonsmooth and $g_i,s_i$ are thus \textit{subgradients}:
% \begin{equation}\label{eq:pep_overestimation}\tag{$\overline{\text{PEP}}$}
% \setlength{\jot}{1pt}
% \begin{split}
%       \sup_{n,\{(x_i,f_i,g_i,h_i,s_i)\}_{i\in I}} & f_N - f_*\\
%         \mbox{subject to: } & \mbox{there exist } (f,h) \in \blnonsmooth(\reals^n) \mbox{ for which }\\
%         % &\quad \{f_i, g_i, h_i, s_i\}_{i \in I} = \{f(x_i), \nabla f(x_i), h(x_i), \nabla h(x_i)\}_{i \in I},\\
%       & \quad f_i = f(x_i),\, g_i \in \partial f(x_i),\\
%         & \quad h_i = h(x_i),\, s_i \in \partial h(x_i),\\
%         & \quad  Ls_i - g_i \in \partial (Lh-f)(x_i),\\
%                     & g_* = 0,\\
%                     & s_{i+1} = s_i - \lambda g_i \quad \mbox{for } i\in \{1\dots N-1\},\\
%                     &  h_* - h_0  - \la s_0, x_* - x_0 \ra = 1.
%      \end{split}
% \end{equation}
\begin{equation}\label{eq:pep_overestimation}\tag{$\overline{\text{PEP}}$}
\BA{rcl}
        &\mbox{maximize} & f_N - f_* \\
        \rule{0pt}{11pt}
        &\mbox{subject to} &  f_i = f(x_i), g_i \in \partial f(x_i),\\
        && h_i = h(x_i), s_i \in \partial h(x_i),\\
        && Ls_i - g_i \in \partial(Lh-f)(x_i) \quad  \mbox{for all $ i \in I$ and some } (f,h) \in \blnonsmooth(\reals^n),\\
        &            & g_* = 0,\\
        &            & s_{i+1} = s_i - \lambda g_i \quad \mbox{for } i\in \{1\dots N-1\},\\
        &            &  h_* - h_0  - \la s_0, x_* - x_0 \ra = 1,\\
\EA
\end{equation}
in the variables $n,\{(x_i,f_i,g_i,h_i,s_i)\}_{i \in I}$. 
We added the technical constraint $ Ls_i - g_i \in \partial (Lh-f)(x_i)$, which is redundant for differentiable functions; but that is necessary in order to establish interpolation conditions in the nonsmooth case. 

Because of the inclusions between the feasible sets of these problems, we naturally have
\[\val{\eqref{eq:pep_underestimation}} \leq \val{\eqref{eq:pep_nolips}} \leq \val{\eqref{eq:pep_overestimation}}.\] 
We prove in the sequel that \eqref{eq:pep_overestimation} can be solved via a semidefinite program and that $\val\eqref{eq:pep_underestimation} = \val\eqref{eq:pep_overestimation}$ (Theorem \ref{thm:equiv}), allowing to reach our claims.

% \begin{remark}
Note that the relaxed problem \eqref{eq:pep_overestimation} does not correspond to any practical algorithm, as NoLips is not properly defined for nonsmooth functions $h$. However, we see in the sequel that feasible points of this problem correspond to accumulation points of \eqref{eq:pep_nolips}. In other words, instances of NoLips can get arbitrarily close to pathological nonsmooth functions whose behaviors are captured by \eqref{eq:pep_overestimation}.
% Indeed, by imposing $s_{i+1}=s_i-\lambda g_i$, we force the fact that the algorithm uses the previously obtained subgradient $s_i$ in order to proceed. Alternatively, evaluating another subgradient $s_i'\in\partial h(x_i)$ and iterating $s_{i+1}=s_i'-\lambda g_i$ would certainly result in a weaker method. In other words, our analysis reduces to a particular way of applying NoLips to the nonsmooth case.
% \end{remark}
% \begin{remark}
% Other types of bounds, such as on the gradient norm $\normsq{\nabla f(x_N)}$ can be obtained simply by changing the objective function, e.g., to $\normsq{\nabla f(x_N)}/{D_h(x_*,x_0)}$ and proceed with the same steps. This point is largely explained in~\cite{Taylor2015} and we therefore do not further discuss this topic.
% \end{remark}

In the following sections, we show that problems \eqref{eq:pep_underestimation} and \eqref{eq:pep_overestimation} can be cast as semidefinite programs (SDP) \cite{Vandenberghe96} and solved numerically using standard packages \cite{mosek,yalmip}. The main ingredient consists in showing that interpolation constraints can actually be expressed using quadratic inequalities, as detailed in the next section.

\subsection{Interpolation involving differentiability and strict convexity}\label{ss:interpolation}

In this section, we show how to reformulate interpolation constraints for~\eqref{eq:pep_underestimation} and~\eqref{eq:pep_overestimation} as quadratic inequalities. We start by recalling interpolation conditions for the class of $L$-smooth and $\mu$-strongly convex functions.

\begin{theorem}[Smooth strongly convex interpolation, \cite{Taylor2017}]
    \label{thm:interp_mul}
Let $I$ be a finite index set, $\{(x_i,f_i,g_i)\}_{i \in I} \in (\reals^n \times \reals \times \reals^n)^{|I|}$ and $0 \leq \mu \leq L \leq +\infty$. The following statements are equivalent:
     
         \begin{enumerate}[label = (\roman*)]
        \item There exists a proper closed convex function $f:\reals^n \rightarrow \reals \cup \{+\infty\}$ such that $f$ is $\mu$-strongly convex, has a $L$-Lipschitz continuous gradient and
        \[f_i = f(x_i),\, g_i \in \partial f(x_i) \quad \forall i \in I.\]
        \item For every $i,j \in I$ we have 
        \begin{equation*}
            \begin{split}
                f_i - f_j - \la g_j, x_i - x_j \ra &\geq \frac{1}{2L}  \|g_i - g_j\|^2 + \frac{\mu }{2 (1 - \mu/L)} \|x_i - x_j - \frac{1}{L}(g_i - g_j)\|^2.
            \end{split}
        \end{equation*}
    \end{enumerate}

\end{theorem}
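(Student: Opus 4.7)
The plan is to prove the two directions separately: (i)$\Rightarrow$(ii) reduces to a classical manipulation of known inequalities, while (ii)$\Rightarrow$(i) is the constructive direction and requires more care.

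For (i)$\Rightarrow$(ii), I would first dispose of the degenerate regimes ($\mu=0$, $L=+\infty$, or $\mu=L$), in which the stated inequality collapses to, respectively, the standard co-coercivity for convex $L$-smooth functions, pure strong convexity, or a quadratic identity. In the generic range $0<\mu<L<\infty$, I would reduce to co-coercivity by considering $\phi := f - \tfrac{\mu}{2}\|\cdot\|^2$, which is convex with $(L-\mu)$-Lipschitz gradient. Applying the classical Baillon--Haddad co-coercivity inequality to $\phi$ at each pair $(x_i,x_j)$ and substituting back $\phi(x) = f(x)-\tfrac{\mu}{2}\|x\|^2$, $\nabla\phi(x) = \nabla f(x)-\mu x$, yields the stated inequality after routine algebraic rearrangement.

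For (ii)$\Rightarrow$(i), I would proceed by reduction to the purely convex and smooth case. Setting $\tilde f_i := f_i - \tfrac{\mu}{2}\|x_i\|^2$ and $\tilde g_i := g_i - \mu x_i$, direct substitution shows that (ii) for the original data is equivalent to the $(0,L-\mu)$-interpolation inequality for the reduced triples $\{(x_i,\tilde f_i,\tilde g_i)\}$. If I can construct a convex, $(L-\mu)$-smooth interpolant $\tilde f$ for this reduced data, then $f(x) := \tilde f(x) + \tfrac{\mu}{2}\|x\|^2$ is $\mu$-strongly convex with $L$-Lipschitz gradient and interpolates the original data, closing the argument.

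For the convex smooth sub-problem, the candidate I would use is the Fenchel-type construction
\[
\tilde f(x) \;=\; \tfrac{L-\mu}{2}\|x\|^2 \;-\; \Bigl(\tfrac{L-\mu}{2}\|\cdot\|^2 - \ell\Bigr)^{**}\!(x), \qquad \ell(x) := \max_{i\in I}\{\tilde f_i + \langle \tilde g_i, x-x_i\rangle\},
\]
which is convex and $(L-\mu)$-smooth by construction (a Moreau-envelope-type regularization of the piecewise-affine lower envelope $\ell$). The main obstacle will be verifying the interpolation equalities $\tilde f(x_i)=\tilde f_i$ and $\nabla \tilde f(x_i)=\tilde g_i$ at each data point. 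The crucial observation is that tightness at $x_i$ reduces, pair by pair, to a single quadratic inequality between indices $i$ and $j$, which is \emph{exactly} the reduced co-coercivity inequality; hypothesis (ii) is therefore precisely the condition preventing any index $j\neq i$ from dominating in the envelope, and it guarantees $\tilde g_i \in \partial \tilde f(x_i)$. Since $\tilde f$ is differentiable by the envelope construction, the subdifferential inclusion becomes equality, concluding the proof.
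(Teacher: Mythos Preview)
The paper does not prove this theorem at all: it is merely quoted from \cite{Taylor2017} as a known ingredient, so there is no ``paper's own proof'' to compare against. Your proposal is essentially the argument given in the cited reference, and the overall plan is sound: the shift $\phi=f-\tfrac{\mu}{2}\|\cdot\|^2$ reduces (i)$\Rightarrow$(ii) to co-coercivity of a convex $(L-\mu)$-smooth function, and the same shift reduces (ii)$\Rightarrow$(i) to the convex $(L-\mu)$-smooth interpolation problem, for which your Fenchel-biconjugate interpolant is indeed the right object.

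One point deserves tightening. Labeling the construction a ``Moreau-envelope-type regularization'' is misleading: the actual Moreau envelope $e_{1/(L-\mu)}\ell$ does \emph{not} interpolate the data in general, and the convexity of $\tilde f = q-(q-\ell)^{**}$ (with $q=\tfrac{L-\mu}{2}\|\cdot\|^2$) is not immediate ``by construction'', since a quadratic minus a convex function need not be convex. The clean justification is via conjugacy: compute $(q-\ell)^*(y)=\max_i\bigl\{\tilde f_i-\langle\tilde g_i,x_i\rangle+\tfrac{1}{2(L-\mu)}\|y+\tilde g_i\|^2\bigr\}$, a maximum of $\tfrac{1}{L-\mu}$-strongly convex quadratics, hence itself $\tfrac{1}{L-\mu}$-strongly convex; its conjugate $(q-\ell)^{**}$ is therefore $(L-\mu)$-smooth, which forces $\tilde f=q-(q-\ell)^{**}$ to be convex. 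Once this is in place, your claim that interpolation at $x_i$ reduces pairwise to the co-coercivity inequality is exactly right (check it by evaluating the sup defining $(q-\ell)^{**}(x_i)$ at the point $y_0=(L-\mu)x_i-\tilde g_i$; the condition $r_j(y_0)\le r_i(y_0)$ unwinds precisely to the $(0,L-\mu)$ interpolation inequality).
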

In particular, when $L=+\infty$ (meaning that we require no smoothness) and $\mu=0$, those conditions reduce to the simpler \emph{convex interpolation} conditions, reminiscent of subgradient inequalities:
\begin{equation}\label{e:subgradient}
f_i - f_j - \la g_j, x_i-x_j \ra \geq 0
\end{equation}
In our setting, we want to avoid working with smoothness and strong convexity, so we provide interpolation conditions for the class of differentiable strictly convex functions.

\begin{proposition}[Differentiable and strictly convex interpolation]\label{prop:interp_cond}
    Let $I$ be a finite index set and $\{(x_i,f_i,g_i)\}_{i \in I} \in (\reals^n \times \reals \times \reals^n)^{|I|}$. The following statements are equivalent:
    
    \begin{enumerate}[label = (\roman*)]
        \item There exists a convex function $f:\reals^n \rightarrow \reals$ such that $f$ is differentiable, strictly convex and
        \[f_i = f(x_i),\, g_i = \nabla f(x_i)\quad \forall i \in I.\]
        \item For every $i,j\in I$ we have 
        % \begin{equation}\label{eq:interp_cond}
        %     \begin{cases}
        %         f_i - f_j - \la g_j, x_i - x_j \ra &\geq 0,\\
        %         f_i - f_j - \la g_j, x_i - x_j \ra  & >0 \mbox{  if   } x_i \neq x_j  \mbox{ (strict convexity),}\\
        %         f_i - f_j - \la g_j, x_i - x_j \ra  & >0 \mbox{ if   } g_i \neq g_j \mbox{ (differentiability).}
        %     \end{cases}
        % \end{equation}
        \modif{
        \begin{equation}\label{eq:interp_cond}
        \begin{aligned}
        \left\{\begin{array}{ll}
        f_i- f_j-\langle g_j,x_i-x_j\rangle >0   &  \text{if } x_i\neq x_j,\\
        f_i=f_j \text{ and } g_i=g_j     & \text{otherwise.}
        \end{array}\right.
        \end{aligned}
        \end{equation}
        }
    \end{enumerate}
\end{proposition}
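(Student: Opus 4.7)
The plan is to prove the two implications separately, with the bulk of the work in the converse direction. For the forward direction (i) $\Rightarrow$ (ii), the weak inequality $f_i - f_j - \la g_j, x_i - x_j \ra \geq 0$ is the standard subgradient inequality for convex differentiable functions. The strict version when $x_i \neq x_j$ follows from strict convexity applied to the one-dimensional restriction $\phi(t) = f(x_j + t(x_i - x_j))$: since $\phi'$ is strictly increasing, $\phi(1) - \phi(0) > \phi'(0) = \la g_j, x_i - x_j \ra$. The strict version when $g_i \neq g_j$ is then automatic: $\nabla f$ being a well-defined single-valued map forces $x_i \neq x_j$, and we invoke the previous case.

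For the converse (ii) $\Rightarrow$ (i), I would first reduce to non-degenerate data. If $x_i = x_j$ for some $i \neq j$, then $g_i = g_j$ is forced, as otherwise the third condition of (ii) would yield $f_i > f_j$ and $f_j > f_i$ simultaneously (the inner-product terms vanish), and then $f_i = f_j$ follows from applying the weak inequality in both directions. Similarly, $g_i = g_j$ with $x_i \neq x_j$ is impossible, since summing the two strict inequalities from the second condition gives $0 > 0$. After discarding duplicates, we may assume all $x_i$ distinct and all $g_i$ distinct, so that finiteness of $I$ yields a uniform slack $\delta := \min_{i \neq j}\bigl(f_i - f_j - \la g_j, x_i - x_j \ra\bigr) > 0$.

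The main step is then to upgrade the data so that it satisfies the smooth strongly convex interpolation conditions of Theorem~\ref{thm:interp_mul}, which will produce the desired function essentially for free. Write $R = \max_{i,j}\|x_i - x_j\|$ and $G = \max_{i,j}\|g_i - g_j\|$; pick $L$ large enough that $G^2/(2L) \leq \delta/2$, and then pick $\mu \in (0,L)$ small enough that $\frac{\mu}{2(1 - \mu/L)}(R + G/L)^2 \leq \delta/2$. The smooth strongly convex interpolation inequalities then hold for every pair, so Theorem~\ref{thm:interp_mul} yields a $\mu$-strongly convex, $L$-smooth function $f$ satisfying $f(x_i) = f_i$ and $\nabla f(x_i) = g_i$ for all $i \in I$. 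Such an $f$ is \emph{a fortiori} strictly convex and continuously differentiable on $\reals^n$, proving (i).

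The main conceptual hurdle is not the construction itself but recognizing the role played by each strict condition: the third one, which looks redundant alongside the second, is precisely what rules out data that cannot arise from a single-valued gradient map, while the finiteness of $I$ is what provides the uniform slack needed to absorb the strong-convexity and smoothness "taxes" that Theorem~\ref{thm:interp_mul} imposes beyond plain convex interpolation.
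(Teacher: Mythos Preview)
Your proof is correct and follows essentially the same strategy as the paper: in the forward direction, use convexity, strict convexity, and single-valuedness of $\nabla f$; in the converse direction, exploit finiteness of $I$ to obtain a uniform positive slack, then choose $\mu>0$ small and $L<\infty$ large so that the smooth strongly convex interpolation conditions of Theorem~\ref{thm:interp_mul} are satisfied, yielding a $\mu$-strongly convex $L$-smooth (hence strictly convex and differentiable) interpolant. The only cosmetic differences are that the paper proves the third strict inequality in (i)$\Rightarrow$(ii) by a short contradiction argument via $\partial f(x_i)=\{\nabla f(x_i)\}$ rather than your (slicker) reduction to the second case, and that for (ii)$\Rightarrow$(i) the paper avoids your explicit deduplication step by taking the minimum slack only over pairs with $x_i\neq x_j$ or $g_i\neq g_j$ and verifying the Theorem~\ref{thm:interp_mul} inequality directly for all pairs.
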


\begin{proof}
    \modif{(i) $\implies$ (ii). Assume that (i) holds, and choose such a function $f$. The first inequality of \eqref{eq:interp_cond} follows from strict convexity of $f$, and the second line is a consequence of the fact that a differentiable convex function has a unique subgradient at each point \citep[Thm 25.1]{Rockafellar2008}.}
    
%     The second inequality follows directly from strict convexity when $x_i \neq x_j$. 
%     Now, to prove the third part, consider the case when we have $\nabla f(x_i) \neq \nabla f(x_j)$ for some indices $i,j$. Let us prove the result by contradiction, i.e., assume that
%     \begin{equation}\label{eq:contra_assump}
%     f(x_i) - f(x_j) - \la \nabla f(x_j),x_i - x_j \ra = 0.
% \end{equation}
% Let $u \in \reals^n$, convexity implies that
% \[
% f(u) \geq f(x_j) + \la \nabla f(x_j), u-x_j \ra.
% \]
% Combining the above inequality with \eqref{eq:contra_assump} gives
% \[
% f(u) \geq f(x_i) + \la \nabla f(x_j), u-x_i\ra \quad \forall u\in \reals^n
% \]    
%     which shows, by definition of a subgradient, that $\nabla f(x_j) \in \partial f(x_i)$. Since $f$ is differentiable at $x_i$, we have by \citep[Thm 25.1]{Rockafellar2008} that $\partial f(x_i) = \{\nabla f(x_i)\}$  which is a contradiction as we assumed $\nabla f(x_i) \neq \nabla f(x_j)$. Thus the third part of \eqref{eq:interp_cond} is proved.
    \modif{
    (ii) $\implies$ (i). Assume (ii). If for all $i,j\in I$, we have $g_i = g_j$ and $x_i = x_j$, then there is only one point and one subgradient to be interpolated, and the result follows immediatly from considering a well-chosen definite quadratic function.
    In the other case, define 
    \[
    \nu = \min_{\substack{i,j \in I \\  x_i \neq x_j}} f_i - f_j - \la g_j, x_i - x_j  \ra.
    \]
    Because of \eqref{eq:interp_cond} and the finiteness of $I$, we have that $\nu > 0$. Now, define $r$ as
    \begin{equation*}
        r = \max_{i,j \in I} \|g_i - g_j\|^2 + \|x_i - x_j\|^2
    \end{equation*}
    so that $r > 0$. Condition \eqref{eq:interp_cond} implies that for all $i,j \in I$ we have
    \begin{equation}\label{eq:min_df_gj}
            f_i - f_j - \la g_j, x_i - x_j \ra \geq \frac{\nu}{r} \left(\|g_i - g_j\|^2 + \|x_i-x_j\|^2 \right).\\
    \end{equation}
    Indeed, if $x_i \neq x_j$, this follows from the definition of $\nu$ and $r$. If $x_i = x_j$ both sides of the inequality are 0 because of the second line in \eqref{eq:interp_cond}.}
    Let us choose two constants $0<\mu<L<+\infty$ such that
    \[
    % \begin{split}
    \frac{1}{L- \mu} \leq \frac{\nu}{r}, \quad
    \frac{\mu}{1 - \mu / L} \leq \frac{\nu}{r},     
    % \end{split}
    \]
     which is possible as it suffices to take $L$ large enough and $\mu$ small enough. %one can verify that this holds for instance if 
    % \[L = \frac{2r}{\nu}, \, \mu = \frac{L}{10+10 \left(\frac{r}{\nu}\right)^2}\]
    We now proceed to show that the interpolation conditions of Theorem \ref{thm:interp_mul} hold with the constants $\mu,L$ defined above.
    Using the inequality $\|u-v\|^2 \leq 2\|u\|^2 + 2\|v\|^2$ and \eqref{eq:min_df_gj}, we get that for all $i,j$,
    \begin{equation*}
        \begin{split}
        &\frac{1}{2L} \|g_i - g_j\|^2 + \frac{\mu }{2(1 - \mu/L)} \|x_i - x_j - \frac{1}{L}(g_i - g_j)\|^2\\
             & \leq \left( \frac{1}{2L}+ \frac{\mu}{L(L - \mu)} \right) \|g_i - g_j\|^2
            + \frac{\mu}{1 - \mu/L} \|x_i-x_j\|^2\\
             & \leq \left( \frac{1}{L}+ \frac{\mu}{L(L - \mu)} \right) \|g_i - g_j\|^2
            + \frac{\mu}{1 - \mu/L} \|x_i-x_j\|^2\\
            &=  \frac{1}{L-\mu} \|g_i - g_j\|^2
            + \frac{\mu }{1 - \mu/L} \|x_i-x_j\|^2\\
            & \leq \frac{\nu}{r} \|g_i - g_j\|^2 + \frac{\nu}{r} \|x_i - x_j\|^2\\
            & \leq f_i - f_j - \la g_j, x_i - x_j \ra.
        \end{split}
    \end{equation*}
Under those conditions, Theorem~\ref{thm:interp_mul} states that there exists a convex function $f$ that interpolates $\{(x_i,f_i,g_i)\}_{i\in I}$  which is $\mu$-strongly convex and has $L$-Lipschitz continuous gradients. A fortiori, since $\mu >0$ and $L < \infty$, $f$ is differentiable and strictly convex.
Finally, $f$ is finite on $\reals^n$ since it is $L$-smooth.
\qed
\end{proof}
% \textit{Remark.}
    % It is easy to adapt the result of Proposition \ref{prop:interp_cond} for only one of the two conditions (strict convexity or differentiability), which amounts to choose only the corresponding inequalities in~\eqref{eq:interp_cond}.
Using these results, we can now formulate interpolation conditions for the problems \eqref{eq:pep_overestimation} and \eqref{eq:pep_underestimation} involving the classes $\blnonsmooth(\reals^n)$ and $\blstrict(\reals^n)$ that were defined in Section \ref{ss:pep_intro}.

\begin{corollary}[Interpolation conditions for \eqref{eq:pep_overestimation}]\label{cor:blnonsmooth}
    Let $I$ be a finite index set and $\{(x_i,f_i,g_i,h_i,s_i)\}_{i \in I} \in (\reals^n \times \reals \times \reals^n \times \reals \times \reals^n)^{|I|}$. The following statements are equivalent.
             \begin{enumerate}[label = (\roman*)]
             \item There exist functions $(f,h) \in \blnonsmooth(\reals^n)$ such that
             \[
             \begin{split}
            f_i = f(x_i),\, g_i \in \partial f(x_i),\\
            h_i = h(x_i),\, s_i \in \partial h(x_i),\\
             Ls_i - g_i \in \partial (Lh-f)(x_i).\\
            \end{split}\]
             \item For all $i,j \in I$ such that $i \neq j$, we have
                  \begin{equation}\label{eq:interp_blnonsmooth}
                     \begin{split}
                         f_i - f_j - \la g_j, x_i- x_j \ra \geq 0, \\
                         (Lh_i-f_i) - (Lh_j-f_j) - \la Ls_j - g_j, x_i- x_j \ra \geq 0.\\
                     \end{split}
                 \end{equation}
            \end{enumerate}
\end{corollary}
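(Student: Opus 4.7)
The two directions are essentially symmetric applications of convex interpolation, the first trivial and the second constructive.

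For $(i) \Rightarrow (ii)$, the plan is immediate: both $f$ and $Lh-f$ are convex on $\reals^n$, with $g_j \in \partial f(x_j)$ and $Ls_j - g_j \in \partial(Lh-f)(x_j)$ by assumption. The standard subgradient inequality~\eqref{e:subgradient} applied to each of these functions yields, respectively, the first and the second line of~\eqref{eq:interp_blnonsmooth}.

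For $(ii) \Rightarrow (i)$, the plan is to construct $f$ and $Lh-f$ separately by invoking the convex interpolation result twice (the $\mu=0$, $L=+\infty$ case of Theorem~\ref{thm:interp_mul}, whose necessary and sufficient condition is exactly the subgradient inequality~\eqref{e:subgradient}). First, the first line of~\eqref{eq:interp_blnonsmooth} is precisely this subgradient condition for the data $\{(x_i,f_i,g_i)\}_{i\in I}$, so there exists a closed convex $f:\reals^n \to \reals$ with $f(x_i)=f_i$ and $g_i \in \partial f(x_i)$; the concrete choice $f(x)=\max_{i\in I}\{f_i+\la g_i, x-x_i\ra\}$ is polyhedral, hence finite on all of $\reals^n$. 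Second, setting $\tilde f_i := Lh_i - f_i$ and $\tilde g_i := Ls_i - g_i$, the second line of~\eqref{eq:interp_blnonsmooth} is the subgradient condition for $\{(x_i,\tilde f_i,\tilde g_i)\}_{i\in I}$, so the same construction yields a real-valued closed convex $\varphi:\reals^n \to \reals$ with $\varphi(x_i)=\tilde f_i$ and $\tilde g_i \in \partial \varphi(x_i)$.

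The final step is to set $h := \tfrac{1}{L}(f+\varphi)$. Then $h$ is convex and real-valued, $Lh-f=\varphi$ is convex by construction (so $(f,h) \in \blnonsmooth(\reals^n)$), and one checks $h(x_i)=\tfrac{1}{L}(f_i+\tilde f_i)=h_i$ and $Ls_i-g_i=\tilde g_i \in \partial \varphi(x_i)=\partial(Lh-f)(x_i)$. Finally, $s_i \in \partial h(x_i)$ follows from the elementary sum inclusion $\partial f(x_i)+\partial \varphi(x_i) \subset \partial(f+\varphi)(x_i)$ applied to $g_i+\tilde g_i=Ls_i$.

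I do not expect any real obstacle here: once the classical convex interpolation lemma is recalled, the only things to verify are the bookkeeping identities $h(x_i)=h_i$, $Lh-f=\varphi$, and $Ls_i \in \partial(Lh)(x_i)$, together with real-valuedness of $f$ and $\varphi$ on $\reals^n$ (ensured by the polyhedral construction, so that $h$ lies in the class $\blnonsmooth(\reals^n)$ as defined in Section~\ref{ss:pep_intro}). The added auxiliary constraint $Ls_i - g_i \in \partial(Lh-f)(x_i)$ is precisely what makes this reformulation possible: without it, one would be unable to recover the decomposition of $h$ into $(Lh-f)+f$ as a sum of two convex pieces with prescribed subgradients.
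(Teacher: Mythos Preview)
Your proposal is correct and follows essentially the same route as the paper: both directions use the subgradient inequality, and for $(ii)\Rightarrow(i)$ you interpolate $f$ and $d:=Lh-f$ separately via the $\mu=0$, $L=+\infty$ case of Theorem~\ref{thm:interp_mul}, then set $h=(f+d)/L$ and recover $s_i\in\partial h(x_i)$ from the sum-rule inclusion $\partial f(x_i)+\partial d(x_i)\subset\partial(f+d)(x_i)$. The only cosmetic addition is your explicit polyhedral construction to guarantee finiteness on $\reals^n$, which the paper leaves implicit.
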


\begin{proof} (i) $\implies$(ii) follows immediately from the definition of a subgradient applied to convex functions $f$ and $Lh-f$.\\
Assume that (ii) holds. By the specialization of \eqref{e:subgradient} in  Theorem \ref{thm:interp_mul}, conditions (ii) imply that there exist two convex functions $f,d : \reals^n \rightarrow \reals$ such that 
\[
    \begin{split}
        f_i &= f(x_i), \quad\quad\quad g_i \in \partial f(x_i),\\
        Lh_i - f_i &= d(x_i), \,\, L s_i - g_i \in \partial d(x_i).
    \end{split}
 \]
Defining the convex function $h = (f + d)/L$, we have that $d = Lh -f $, hence $Ls_i - g_i \in \partial(Lh-f)(x_i)$. We also get \[
 \begin{split}
        &h_i = h(x_i), \, s_i \in \partial h(x_i),
 \end{split}
\]
where we used the fact that $L s_i \in \partial f(x_i) + \partial d(x_i) \subset \partial (f+d)(x_i) = L \partial h(x_i)$ (see \cite[Thm 23.8]{Rockafellar2008} for the subdifferential of a sum of convex functions). Hence (i) holds.
\qed
\end{proof}

\begin{corollary}[Interpolation conditions for \eqref{eq:pep_underestimation}]\label{cor:blstrict}
    Let $I$ be a finite index set and $\{(x_i,f_i,g_i,h_i,s_i)\}_{i \in I} \in (\reals^n \times \reals \times \reals^n \times \reals \times \reals^n)^{|I|}$. Assume that $x_i \neq x_j$ for every $i \neq j \in I$. The following statements are equivalent.
             \begin{enumerate}[label = (\roman*)]
             \item There exist functions $(f,h) \in \blstrict(\reals^n)$ such that \[
             \begin{split}
             \quad f_i = f(x_i),\, g_i = \nabla f(x_i),\\
            \quad h_i = h(x_i),\, s_i = \nabla h(x_i).
            \end{split}\]
             \item For all $i,j \in I$ such that $i \neq j$ we have
                  \begin{equation}\label{eq:interp_blstrict}
                     \begin{split}
                         f_i - f_j - \la g_j, x_i- x_j \ra &> 0, \\
                         (Lh_i-f_i) - (Lh_j-f_j) - \la Ls_j - g_j, x_i- x_j \ra &> 0.\\
                     \end{split}
                 \end{equation}
            \end{enumerate}
\end{corollary}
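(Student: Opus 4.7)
The plan is to mirror the proof of Corollary~\ref{cor:blnonsmooth}, but with plain subgradient inequalities replaced by the differentiable strictly convex interpolation result of Proposition~\ref{prop:interp_cond}. The hypothesis that the $x_i$ are pairwise distinct is what allows us to traffic in strict inequalities throughout.

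For the direction (i) $\implies$ (ii), given $(f,h) \in \blstrict(\reals^n)$ interpolating the data, both $f$ and $Lh-f$ are by definition differentiable and strictly convex. Since $x_i \neq x_j$ for $i \neq j$, applying Proposition~\ref{prop:interp_cond} separately to $f$ and to $Lh - f$ yields the two strict inequalities of~\eqref{eq:interp_blstrict}.

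For the converse (ii) $\implies$ (i), I would apply Proposition~\ref{prop:interp_cond} twice: once to $\{(x_i,f_i,g_i)\}_{i\in I}$, producing a differentiable strictly convex function $f:\reals^n\to\reals$ interpolating that data, and once to $\{(x_i,Lh_i-f_i,Ls_i-g_i)\}_{i\in I}$, producing a differentiable strictly convex $d$. The precondition~\eqref{eq:interp_cond} of Proposition~\ref{prop:interp_cond} is satisfied in both cases since, under the distinctness of the $x_i$, the strict inequalities in~\eqref{eq:interp_blstrict} imply all three lines of~\eqref{eq:interp_cond} simultaneously. Setting $h := (f+d)/L$, I get $Lh - f = d$, which is convex, and $h$ inherits differentiability and strict convexity from the sum. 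Reading off gradients via $\nabla h(x_i) = (\nabla f(x_i) + \nabla d(x_i))/L = (g_i + (Ls_i - g_i))/L = s_i$ confirms the interpolation of $(h_i,s_i)$.

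It remains to verify that $(f,h)$ actually lies in $\blstrict(\reals^n)$, i.e., satisfies Assumption~\ref{assumption_bpg}; this is the only place requiring care. Inspecting the proof of Proposition~\ref{prop:interp_cond} shows that $f$ and $d$ are constructed to be $\mu$-strongly convex and $L'$-smooth for some finite $\mu, L' > 0$, so $h=(f+d)/L$ is also strongly convex and smooth. Strong convexity of $h$ secures condition~\ref{assumption:well_posed} (uniqueness of the minimizer in the auxiliary Bregman subproblem), while strong convexity of $f$ yields a minimizer, handling conditions \ref{ass:bounded_below} and \ref{ass:argmin_in_dom}. Conditions \ref{ass:h_leg} and \ref{ass:f_diff} are trivial since $\dom f = \dom h = \reals^n$ and both functions are finite and $C^1$. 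The main obstacle is thus not any deep argument but rather bookkeeping: threading the strong convexity/smoothness constants produced by Proposition~\ref{prop:interp_cond} through to verify each clause of Assumption~\ref{assumption_bpg}.
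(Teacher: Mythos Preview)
Your proposal is correct and follows essentially the same approach as the paper: both directions proceed by applying Proposition~\ref{prop:interp_cond} to the pair $f$ and $d:=Lh-f$, using the distinctness hypothesis to collapse~\eqref{eq:interp_cond} to a single strict inequality, and then recovering $h=(f+d)/L$; the membership in $\blstrict(\reals^n)$ is secured exactly as you do, by noting that the interpolants from Proposition~\ref{prop:interp_cond} can be taken strongly convex. Your write-up is in fact slightly more thorough than the paper's, which only explicitly mentions the well-posedness condition~\ref{assumption:well_posed} and leaves the remaining clauses of Assumption~\ref{assumption_bpg} implicit.
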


\begin{proof} Note that since we assumed $x_i \neq x_j$ for every $i \neq j$, interpolation conditions of Proposition \ref{prop:interp_cond} reduce to requiring a strict inequality in~\eqref{eq:interp_cond} for every $i\neq j$.
    As before, define $d := Lh-f$. Then since $(f,h) \in \blstrict(\reals^n)$ the functions $f$ and $d$ are differentiable strictly convex, hence (i) $\implies$ (ii) follows simply from strict convexity of these functions.
    
    Conversely, assume (ii). By using Proposition \ref{prop:interp_cond} again, we can interpolate differentiable strictly convex functions $f$ and $d$ and recover $h$ with $h = (f+d)/L$, thus we have naturally $Lh-f$ convex. The function $h$ is thus also differentiable and strictly convex. Moreover, it can be seen from the proof of Proposition~\ref{prop:interp_cond} that the interpolating functions can actually be chosen strongly convex, hence with this choice the well-posedness condition Assumption \ref{assumption_bpg}\ref{assumption:well_posed} holds, and we can conclude that $(f,h)\in \blstrict(\reals^n)$.
\qed
\end{proof}

\subsection{Semidefinite reformulations}\label{ss:sdp}

Now that we established the interpolation conditions for~\eqref{eq:pep_underestimation} and~\eqref{eq:pep_overestimation}, we may use them to obtain semidefinite performance estimation formulations as in \cite{Drori2014,Taylor2017}. This is made possible by observing that interpolation conditions \eqref{eq:interp_blnonsmooth}-\eqref{eq:interp_blstrict} are quadratic inequalities in the problem variables.
  
\newcommand{\sizeP}{3(N+2)}  
  
Let $\{(x_i,f_i,g_i,h_i,s_i)\}_{i\in I}$ be a feasible point of one of the PEPs in dimension $n$. 
% Information about the vectors involved in the problem can be stacked in a matrix $P \in \reals^{n\times (\sizeP)}$  as follows
% \[P = [x_0,\dots, x_N,x_*,\,g_0, \dots g_N,g_*,\,s_0,\dots,s_N,s_*].\]
% Define now the Gram matrix that contains all dot products between $x_i,g_i,s_i$ for $i \in I$:
% \[G = P^T P \in \symm_{\sizeP}\]
We write $G \in \symm_{\sizeP}$ the Gram matrix that contains all dot products between $x_i,g_i,s_i$ for $i \in I$, with
\[
G = 
\BPM
G^{xx} & G^{gx} & G^{sx}\\
G^{gx\top} & G^{gg} & G^{gs}\\
G^{sx\top} & G^{gs\top} & G^{ss}\\
\EPM
\succeq 0
\]
whose size is independent of the dimension $n$, where the blocks are defined as 
\[
G^{xx}_{ij} = \la x_i, x_j \ra,~G^{gx}_{ij} = \la g_i, x_j \ra,~G^{gs}_{ij} = \la g_i, s_j \ra,~G^{gg}_{ij} = \la g_i, g_j \ra, G^{sx}_{ij} = \la s_i, x_j \ra,~G^{ss}_{ij} = \la s_i, s_j \ra,\quad i,j\in I.
\]
Denote by  \[
F = (f_0,\dots,f_N,f_*) \in \reals^{N+2},\quad
H = (h_0,\dots,h_N,h_*) \in \reals^{N+2},
\]
the vectors representing the function values of $f,h$ at the iterates. Finally observe that all the constraints of \eqref{eq:pep_underestimation} and \eqref{eq:pep_overestimation} can be expressed using only $G$, $F$ and $H$. 
% Define the \textit{encoding vectors} $\bx_i,\bg_i,\bs_i \in \reals^{2N+3}$ for $i \in \{0\dots N\}$ by
% \begin{equation}\label{eq:encoding_vectors}
%     \begin{split}
%         \bx_i &= e_i, \\
%         \bg_i &= e_{N+1+i}, \\
%         \bs_i &= e_{2N+3} - \sum_{k=0}^{i-1} \lambda e_{N+1+k},
%     \end{split}
% \end{equation}
% and 
% \[\bx_* = (0,\dots,0),\quad \bg_* = (0,\dots,0),\quad \bs_* = (0,\dots,0),\]
%  so that we have the following convenient relations for every $i \in I$ 
% \begin{equation*}
%     \begin{split}
%         x_i = P\bx_i,  \\
%         g_i = P\bg_i,  \\
%         s_i = P\bs_i, 
%     \end{split}
% \end{equation*}
%  which can be verified from the definition of $P$ and \eqref{eq:lin_rel_sk}. Using these vectors we can express any dot product between the vectors involved in our problem with the Gram matrix $G$; indeed, we have for instance that \[\la x_i, x_j \ra = \la P \bx_i, P \bx_j\ra = \la G \,\bx_i, \bx_j\ra = \la \bx_i,\bx_j \ra_G.\] 
 
 For instance, interpolation conditions~\eqref{eq:interp_blnonsmooth} for $\blnonsmooth(\reals^n)$ rewrite for all $i,j \in I$ as
 \begin{equation*}
     \begin{split}
         f_i - f_j - G^{gx}_{ji} + G^{gx}_{jj} &\geq 0,\\
         (Lh_i-f_i) - (Lh_j-f_j) -  L(G^{sx}_{ji} - G^{sx}_{jj}) + G^{gx}_{ji} - G^{gx}_{jj} &\geq 0.\\
     \end{split}
 \end{equation*}
%  This allows to reformulate the relaxation~\eqref{eq:pep_overestimation} as a semidefinite program in the variables $G,F,H$:
%  \begin{equation}\label{eq:pep_over_sdp}\tag{sdp-$\overline{\text{PEP}}$}
%  \setlength{\jot}{1pt}
%     \begin{split}
%           \sup_{\substack{G \in \,\mathbf{S}_{\sizeP}, F, H \in \reals^{N+2} }} & f_N - f_*\\
%             \textup{subject to: } 
%             & \hspace{14.5em} f_i - f_j - G^{gx}_{ij} + G^{gx}_{jj} \geq 0 \quad \textup{ for } i,j \in I,\\
%             & (Lh_i-f_i) - (Lh_j-f_j) -  L(G^{sx}_{ij} - G^{sx}_{jj}) + G^{gx}_{ij} - G^{gx}_{jj} \geq 0 \quad \mbox{ for } i,j \in I,\\
%         & G^{gg}_{**} = 0,\\
%         & G^{sx}_{i+1,j} = G^{sx}_{ij}-\lambda G^{gx}_{ij}  \quad \mbox{ for } i \in \{0\dots N-1\}, j \in I,\\
%          &    h_* - h_0  - G^{sx}_{0*} + G^{sx}_{00} =1, \\
%         &G \succeq 0.\\
%          \end{split}
%     \end{equation}
This allows us to reformulate the relaxation~\eqref{eq:pep_overestimation} as a semidefinite program, written
\renewcommand{\arraystretch}{1.1}
\begin{equation}\label{eq:pep_over_sdp}\tag{sdp-$\overline{\text{PEP}}$}
\BA{ll}
\mbox{maximize} & f_N - f_*\\
\rule{0pt}{12pt}
\mbox{subject to} &  f_i - f_j - G^{gx}_{ji} + G^{gx}_{jj} \geq 0, \\
         &(Lh_i-f_i) - (Lh_j-f_j) -  L(G^{sx}_{ji} - G^{sx}_{jj}) + G^{gx}_{ji} - G^{gx}_{jj} \geq 0 \quad \mbox { for } i,j \in I,\\
         & G^{gg}_{**} = 0,\\
          & G^{sx}_{i+1,j} = G^{sx}_{ij}-\lambda G^{gx}_{ij}  \quad \mbox{ for } i \in \{0\dots N-1\}, j \in I,\\
         &  h_* - h_0  - G^{sx}_{0*} + G^{sx}_{00} =1, \\
        & G \succeq 0,
\EA
\end{equation}
in the variables $G\in\symm_{\sizeP}$ and $F, H \in \reals^{N+2}$.

Any feasible point of~\eqref{eq:pep_overestimation} can be cast into an admissible point of~\eqref{eq:pep_over_sdp} by computing the semidefinite Gram matrix $G$. Conversely, if $G,F,H$ is an admissible point of \eqref{eq:pep_over_sdp}, then the vectors $\{(x_i,g_i,s_i)\}_{i\in I}$ can be recovered by performing, for instance, a Cholesky decomposition of $G$. Note that we expressed the algorithmic constraint $s_{i+1} = s_i - \lambda g_i$ only through scalar products with the $x_i$'s in the SDP, since only the projection of the gradients on $\text{Span}(\{x_i\}_{i \in I})$ is relevant in the PEPs.
Because interpolation conditions from Corollary \ref{cor:blnonsmooth} are necessary and sufficient, we conclude that the problems are equivalent, that is
     \[\text{val}\eqref{eq:pep_over_sdp}=\text{val}\eqref{eq:pep_overestimation}.\]
 
The rank of $G$ determines the dimension of the interpolated problem. If we look instead for a solution that has a given dimension $n$, this would mean imposing a nonconvex rank constraint on $G$. Our formulation, on the other hand, is convex and finds the best convergence bound that is dimension-independent, which is a usual requirement for \emph{large-scale settings}. \modif{In our setting, given the size of $G$ and the algorithmic constraints, we can show that there exists worst-case instances of dimensions at most $2N+5$. For NoLips, we show in the sequel that it is even possible to find simple worst-cases in a single dimension.}
 
In the same way, the value of  \eqref{eq:pep_underestimation} can be computed as 
 \begin{equation}\label{eq:pep_under_sdp}\tag{$\text{sdp-}\underline{\text{PEP}}$}
\BA{ll}
\mbox{maximize} & f_N - f_*\\
\rule{0pt}{12pt}
\mbox{subject to} & f_i - f_j - G^{gx}_{ji} + G^{gx}_{jj} > 0,\\
         &(Lh_i-f_i) - (Lh_j-f_j) -  L(G^{sx}_{ji} - G^{sx}_{jj}) + G^{gx}_{ji} - G^{gx}_{jj} > 0 \quad \mbox { for } i \neq j \in I,\\
         & G^{gg}_{**} = 0,\\
          & G^{sx}_{i+1,j} = G^{sx}_{ij}-\lambda G^{gx}_{ij}  \quad \mbox{ for } i \in \{0\dots N-1\}, j \in I,\\
         &  h_* - h_0  - G^{sx}_{0*} + G^{sx}_{00} =1, \\
     & G^{xx}_{ii} + G^{xx}_{jj} - 2 G^{xx}_{ij} > 0 \quad \mbox{for } i\neq j \in I,\\ 
        & G \succeq 0,
\EA
\end{equation}
in the variables $G\in\symm_{\sizeP}$ and $F, H \in \reals^{N+2}$, where we used interpolation conditions for $\blstrict(\reals^n)$ from Corollary \ref{cor:blstrict}, since all points $\{x_i\}_{i \in I}$ are constrained to be distinct. Therefore, as above we infer that 
     \[\text{val}\eqref{eq:pep_under_sdp}=\text{val}\eqref{eq:pep_underestimation}.\]
Recalling the hierarchy between the problems, we thus have
\[\text{val}\eqref{eq:pep_under_sdp} \leq \text{val}\eqref{eq:pep_nolips} \leq \text{val}\eqref{eq:pep_over_sdp}.\]
By comparing the two semidefinite programs stated above, one can notice that the only difference is that \eqref{eq:pep_under_sdp} imposes some inequalities of \eqref{eq:pep_over_sdp} to be strict.
In the next section, we use topological arguments to prove that the values of the two problems are actually equal. In fact, strict inequalities have little meaning in numerical optimization (the value of \eqref{eq:pep_under_sdp} is actually a supremum and not a maximum); in our experiments, we focus on \eqref{eq:pep_over_sdp} as solvers usually admit only closed feasible sets.
 \renewcommand{\arraystretch}{1}
% % \setlength{\unitlength}{1mm}
% % \thicklines
% \begin{figure}
%     \centering
%      \begin{tikzpicture}[scale = 0.9]
%         \draw[dashed] (0,0) ellipse (3.5cm and 0.8cm);
%         \draw[dashed] (0,0.4) ellipse (4.3cm and 1.4cm);
%         \draw (0,0.7) ellipse (5cm and 2cm);
%         \node at (0,0) {\eqref{eq:pep_underestimation} $\equiv$ \eqref{eq:pep_under_sdp}};
%         \node at (0,1.2) {\eqref{eq:pep_nolips}};
%         \node at (0,2.2) {\eqref{eq:pep_overestimation} $\equiv$ \eqref{eq:pep_over_sdp}};
%     \end{tikzpicture}
%     \caption{Inclusions between the feasible sets of the different performance estimation problems. The symbol $\equiv$ denotes equivalence up to a reparametrization. In Section \ref{s:tightpeps}, we show that the closure of the feasible set of \eqref{eq:pep_under_sdp} is the feasible set of \eqref{eq:pep_over_sdp}, proving thus that all these problems have the same value.}
%     \label{fig:pep_hierarchy}
% \end{figure}

%\subsection{Tightness of the approach: the limiting nonsmooth pathological behaviors}\label{s:tightpeps}
\subsection{Tightness of the approach: nonsmooth limit behaviors}\label{s:tightpeps}

We are now ready to prove the main result of this section.
\begin{theorem}\label{thm:equiv}
    The value of the performance estimation problem \eqref{eq:pep_nolips} for NoLips is equal to the value of the nonsmooth relaxation \eqref{eq:pep_overestimation}, which can be computed by solving  the semidefinite program \eqref{eq:pep_over_sdp}.
\end{theorem}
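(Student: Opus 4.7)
The plan is to establish the sandwich
\begin{equation*}
\val\eqref{eq:pep_under_sdp} = \val\eqref{eq:pep_underestimation} \leq \val\eqref{eq:pep_nolips} \leq \val\eqref{eq:pep_overestimation} = \val\eqref{eq:pep_over_sdp},
\end{equation*}
where the two outer equalities are the SDP reformulations already derived in Section \ref{ss:sdp} via Corollaries \ref{cor:blnonsmooth} and \ref{cor:blstrict}, and the two middle inequalities are immediate from the inclusions $\blstrict(\reals^n) \subset \bl(\reals^n) \subset \blnonsmooth(\reals^n)$ together with the fact that NoLips is well-defined on any couple in $\bl(\reals^n)$. The theorem then reduces to the reverse inequality $\val\eqref{eq:pep_over_sdp} \leq \val\eqref{eq:pep_under_sdp}$, which collapses the entire chain to a common value and proves both claims at once.

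This missing inequality is a density statement: every feasible point of the closed relaxation \eqref{eq:pep_over_sdp} should be approachable by feasible points of the strictly-constrained program \eqref{eq:pep_under_sdp} with arbitrarily close objective. Given $(G,F,H)$ feasible for \eqref{eq:pep_over_sdp} with objective $v$, I would first use Corollary \ref{cor:blnonsmooth} to extract an interpolating couple $(f,h) \in \blnonsmooth(\reals^n)$, then regularize it into a couple in $\blstrict$. Concretely, I would introduce a quadratic perturbation centered at $x_*$,
\begin{equation*}
f_\epsilon(x) = f(x) + \tfrac{\epsilon}{2}\|x - x_*\|^2, \qquad h_\epsilon(x) = h(x) + \tfrac{\epsilon}{L}\|x-x_*\|^2,
\end{equation*}
so that both $f_\epsilon$ and $Lh_\epsilon - f_\epsilon$ gain a strongly convex piece, and then smooth each of them by a Moreau--Yosida convolution at a scale $o(\epsilon)$ to restore differentiability without destroying strict convexity. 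Centering at $x_*$ preserves $\nabla f_\epsilon(x_*) = 0$, and if some of the original iterates coincide, augmenting the ambient space with a few auxiliary coordinates and slightly separating the iterates along those directions makes the distinctness inequalities strict while leaving the relevant inner products unchanged in the limit.

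To produce a feasible point of \eqref{eq:pep_under_sdp} from this perturbation, I would re-run NoLips on the regularized couple from $x_0$ and read off the new Gram and value data from the iterates. The strict interpolation inequalities of Corollary \ref{cor:blstrict} follow from the strong convexity added by the quadratic perturbation; the linear recursion $s_{i+1} = s_i - \lambda g_i$ holds by construction; and the normalization $D_{h_\epsilon}(x_{*,\epsilon}, x_0) = 1$ can be enforced by a homogeneous rescaling of the couple, which leaves the scale-invariant objective unchanged. Sending $\epsilon \to 0^+$ should then drive the interpolation data back to $(G,F,H)$ and the objective back to $v$, giving $\val\eqref{eq:pep_under_sdp} \geq v$; taking the supremum over $v$ closes the sandwich.

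The main obstacle is precisely this continuity step: one must show that the iterates and gradients produced by NoLips on $(f_\epsilon, h_\epsilon)$ converge to something compatible with the original data as $\epsilon \to 0^+$, even though the limit mirror map $\nabla h^*$ need not be single-valued for $h \in \blnonsmooth$. Strong convexity of $h_\epsilon$ makes the Bregman proximal step single-valued and locally Lipschitz (via the implicit function theorem applied to $\nabla h_\epsilon$), and standard stability estimates for Bregman proximal maps should transfer convergence of the inputs to convergence of the whole trajectory. Once this stability is in hand, the feasible point of \eqref{eq:pep_under_sdp} so constructed has objective converging to $v$, which closes the sandwich and proves the theorem.
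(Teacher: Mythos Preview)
Your sandwich structure and the reduction to showing $\val\eqref{eq:pep_over_sdp}\leq\val\eqref{eq:pep_under_sdp}$ are exactly right, and match the paper. The divergence is in how this density step is carried out.

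The paper's argument is purely at the SDP level and avoids function-space regularization entirely. It first exhibits one concrete strictly feasible point $(G,F,H)$ of \eqref{eq:pep_under_sdp}: take $h(x)=\tfrac12 x^2$ and $f(x)=\tfrac{\alpha}{2}x^2$ in one dimension with $\alpha=\min(\tfrac{1}{2\lambda},\tfrac{L}{2})$, so that NoLips is just gradient descent with contraction factor $1-\lambda\alpha\in(0,1)$ and all iterates are distinct. Then, given any feasible $(\overline G,\overline F,\overline H)$ of \eqref{eq:pep_over_sdp}, the convex combination $(\tfrac1k G+(1-\tfrac1k)\overline G,\ldots)$ is feasible for \eqref{eq:pep_under_sdp} for every $k$, because all constraints are linear in $(G,F,H)$ and a strict inequality plus a weak inequality is strict. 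Letting $k\to\infty$ and using continuity of the linear objective finishes the proof. No interpolation, no smoothing, no re-running of NoLips is needed.

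Your route, by contrast, goes back to the function level and re-runs NoLips on a regularized pair $(f_\epsilon,h_\epsilon)$. The obstacle you flag is real and, as stated, is a gap rather than a technicality. The data $(x_i,g_i,s_i)$ encoded in $(\overline G,\overline F,\overline H)$ correspond to a \emph{particular} subgradient selection for the nonsmooth $h$, and the NoLips recursion fixes $x_{i+1}=\nabla h^*(\nabla h(x_i)-\lambda g_i)$ only up to that selection. When you smooth $h$ to $h_\epsilon$, the map $\nabla h_\epsilon^*$ becomes single-valued, but its pointwise limit as $\epsilon\to0$ need not pick out the same element of $\partial h^*$ that the original data used. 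Concretely, if $h(x)=|x|$ and the feasible data has $s_i-\lambda g_i=0$ with $x_{i+1}\neq 0$, any reasonable symmetric smoothing will force $\nabla h_\epsilon^*(0)=0$, so the regenerated iterate will converge to $0$, not to the original $x_{i+1}$. Thus the regenerated Gram data need not converge to $(\overline G,\overline F,\overline H)$, and the approximation argument breaks down. The paper sidesteps this entirely by exploiting the linearity of the SDP formulation, which is precisely what makes the interpolation-condition reformulation pay off here.
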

\begin{proof} We show that the closure of the feasible set of \eqref{eq:pep_under_sdp} is the feasible set of \eqref{eq:pep_over_sdp}. We first need to prove that the strengthened problem \eqref{eq:pep_underestimation} is feasible, by finding an instance of NoLips where $f$ and $Lh-f $ are strictly convex and such that all iterates are distinct. It suffices for instance to consider two one-dimensional quadratic functions. Define $f,h : \reals \rightarrow \reals$ with
         \[f(x) = \frac{\alpha}{2}x^2, \, h(x) = \frac{1}{2} x^2 \quad \mbox{ where } \alpha = \min \left(\frac{1}{2 \lambda}, \frac{L}{2} \right).\]
         Then $f$ is strictly convex and so is $Lh-f = \frac{L-\alpha}{2}x^2$ since $L-\alpha \geq \frac{L}{2}>0$. The optimum is $x_* = 0$. Choose 
         \[x_0 = \sqrt{2}\]
        for which we have $D_{h}(x_*,x_0) = x_0^2/2 = 1$. Then, Algorithm \ref{algo:bpg} is equivalent to gradient descent and the iterates satisfy
        \[x_N = (1 - \lambda \alpha)^Nx_0.\]
        Since $\alpha \lambda \leq 1/2 < 1$, all the iterates are distinct and therefore we constructed a feasible point of \eqref{eq:pep_underestimation}. 
        Let us therefore write $(G,F,H)$ a corresponding feasible point of \eqref{eq:pep_under_sdp}, and $(\overline{G},\overline{F},\overline{H})$ a feasible point of \eqref{eq:pep_over_sdp}.
        Define the sequence $\{(G^k,F^k,H^k)\}_{k \geq 1}$ as
    \begin{equation*}
        \begin{split}
            G^k &= \frac{1}{k} G + (1 - \frac{1}{k}) \overline{G},\\
            F^k &= \frac{1}{k} F + (1 - \frac{1}{k}) \overline{F},\\
            H^k &= \frac{1}{k} H  + (1 - \frac{1}{k}) \overline{H}.\\
        \end{split}
    \end{equation*}
    
    Then, for every $k \geq 1$, $(G^k,F^k,H^k)$ is still a feasible point of \eqref{eq:pep_under_sdp}, because of convexity of the constraints and the fact that adding a strict inequality to a weak inequality gives a strict inequality. Moreover, the sequence converges to the point $(\overline{G},\overline{f},\overline{h})$ when $k \rightarrow +\infty$.

    Hence we proved that for any feasible point of \eqref{eq:pep_over_sdp}, there is a sequence of admissible points of \eqref{eq:pep_under_sdp} that converge to it.
    Since the objective is linear in the vector $F$ therefore continuous, we deduce that the two problems have the same value:
    \[\text{val}\eqref{eq:pep_under_sdp} = \text{val}\eqref{eq:pep_over_sdp},\]
    which means that $\text{val}\eqref{eq:pep_underestimation} = \text{val}\eqref{eq:pep_overestimation}$. As val\eqref{eq:pep_nolips} lies in between these two values, we conclude that they are all equal.
    \qed
\end{proof}

Theorem~\ref{thm:equiv} states that the value of the original problem \eqref{eq:pep_nolips} can be computed numerically with a semidefinite solver applied to \eqref{eq:pep_over_sdp}. The result itself also helps us gain some theoretical insight: it tells us that the worst-case for NoLips might be reached as $(f,h)$ approach possibly pathological limiting nonsmooth functions in $\blnonsmooth(\reals^n)$. 
% We observed this phenomenon empirically when trying to guess a worst-case Bregman method: this led us in turn to the design of the smoothed lower bound presented in Section \ref{ss:lower_bound}.

Observe also that we focused on presenting the PEP for the class $\mathcal{B}_L(\reals^n)$ to avoid technicalities related to the domain of definition. However, we show in Appendix \ref{a:domh} that the exact same problem \eqref{eq:pep_over_sdp} also solves the performance estimation problem for NoLips on the general class $\mathcal{B}_L(C)$, for any nonempty closed convex set $C$.

\subsection{Numerical evidence and computer-assisted proofs}\label{ss:proofs}

We now provide several applications of the performance estimation framework that we developed for Bregman methods. 
% We then also extend it for the inertial variant IGA \cite{Auslender2006}...

\subsubsection{Solving \eqref{eq:pep_nolips} for finding the exact worst-case convergence rate of NoLips}\label{sss:nolips_pep}

We first start by the most direct application, that is finding the exact worst-case performance of NoLips. Theorem \ref{thm:equiv} states that it can be computed by solving the semidefinite program \eqref{eq:pep_over_sdp}. The link to the MATLAB implementation is provided in Section \ref{s:conclusion}.

To simplify our setting, note that we can assume without loss of generality that the relative smoothness constant $L$ is $1$, since we can replace $h$ by a scaled version $Lh$. Recall that we know from Theorem \ref{thm:nolips_bound}, that
\[ 
\text{val\eqref{eq:pep_nolips}} \leq \frac{1}{\lambda N}.
\]
Table \ref{tab:pep_nolips} shows the result of solving \eqref{eq:pep_over_sdp} for several values of $N$ up to 100, for a step size $\lambda = 1$. We observe that with high precision, val\eqref{eq:pep_over_sdp} is equal to the theoretical bound $1/(\lambda N)$.

\begin{table}[H]
\footnotesize
\begin{center}
\caption{ Numerical value of the performance estimation problem \eqref{eq:pep_nolips} with $\lambda = 1$, $L = 1$. \textit{Rel. error} denotes the relative error between val\eqref{eq:pep_nolips} and the theoretical bound of $1/N$ given by Theorem \ref{thm:nolips_bound}. \textit{Primal feasibility} corresponds to the maximal absolute value of constraint violation returned by the MOSEK solver.}
\label{tab:pep_nolips}       
\begin{tabular}{lccc}
\hline\noalign{\smallskip}
N & val\eqref{eq:pep_nolips} & Rel. error & Primal feasibility  \\
\noalign{\smallskip}\hline\noalign{\smallskip}
1 & 1.000 & 1.8e-11 &  4.3e-10 \\
2 & 0.500 & 1.8e-8 & 2.8e-9\\
3 & 0.333 & 1.8e-8 & 2.8e-9 \\
4 & 0.250 & 4.9e-8 & 2.3e-8 \\
5 & 0.200 & 1.8e-10 & 6.4e-11 \\
10 & 0.100 & 6.4e-11 & 1.3e-11 \\
20 & 0.050 & 1.1e-8 & 1.9e-10\\
50 & 0.020 & 6.5e-6 & 5.0e-7\\
100 & 0.01 & 7.2e-5 & 1.6e-6\\
\noalign{\smallskip}\hline
\end{tabular}
\end{center}
\end{table}
\paragraph{Other values of $\lambda$.} One can wonder how the numerical value evolves when one varies the step size $\lambda$. Our experimental observations are as follows:
\begin{itemize}
    \item For any $\lambda \in (0,1/L]$, val\eqref{eq:pep_nolips} is exactly equal to the theoretical bound $1/(\lambda N)$.
    \item For any $\lambda > 1/L$, val\eqref{eq:pep_nolips} $= +\infty$, hence Algorithm \ref{algo:bpg} does not converge in general with these step size values. This suggests that the maximal step size value allowed for NoLips is indeed $1/L$, unlike the Euclidean setting where gradient descent can be applied with a step size that goes up to $2/L$.
\end{itemize}

While results above suggest that $1/(\lambda N)$ is the exact worst-case rate of NoLips, they provide only numerical evidence. We can however use them to deduce formal guarantees, both for proving an \textit{upper bound} and a \textit{lower bound}.

\paragraph{Upper bound guarantee through duality.} As noticed in previous work on PEPs \cite{Drori2014,Taylor2015}, solving the dual of \eqref{eq:pep_over_sdp} can be used to deduce a proof. Indeed, the dual solution gives a combination of the constraints that, when transposed to analytical form, leads to a formal guarantee. 
This provides the following proof for the $O(1/k)$ convergence rate of Theorem \ref{thm:nolips_bound}.
\paragraph{Proof of Theorem \ref{thm:nolips_bound}} The proof relies on the fact that, since $Lh-f$ is convex we have that $\tfrac{1}{\lambda}h-f$ is convex for any $\lambda\in (0,\tfrac1L]$, and only consists in performing the following weighted sum of inequalities:
\begin{itemize}
    \item convexity of $f$, between $u$ and $x_i$  ($i=0,\hdots,k$) with weights $\gamma_{*,i}=\tfrac{1}{k}$:
    \[f(u)\geq f(x_i)+ \inner{ \nabla f(x_i)}{u-x_i},\]
    \item convexity of $f$, between $x_i$ and $x_{i+1}$ ($i=0,\hdots,k-1$) with weights $\gamma_{i,i+1}=\tfrac{i}{k}$:
    \[f(x_i)\geq f(x_{i+1})+\inner{\nabla f(x_{i+1})}{x_i-x_{i+1}},\]
    \item convexity of $\tfrac{1}{\lambda}h-f$, between $u$ and $x_k$ with weight $\mu_{*,k}=\tfrac1k$:
    \[\tfrac{1}{\lambda}h(u)-f(u)\geq \tfrac{1}{\lambda}h(x_k)-f(x_k)+\inner{\tfrac{1}{\lambda}\nabla h(x_k)-\nabla f(x_k)}{u-x_k},\]
    \item convexity of $\tfrac{1}{\lambda}h-f$, between $x_{i+1}$ and $x_{i}$ ($i=0,\hdots,k-1$) with weight $\mu_{i+1,i}=\tfrac{i+1}{k}$
    \[\tfrac{1}{\lambda}h(x_{i+1})-f(x_{i+1})\geq \tfrac{1}{\lambda}h(x_i)-f(x_i)+\inner{\tfrac{1}{\lambda}\nabla h(x_i)-\nabla f(x_i)}{x_{i+1}-x_i},\]
    \item convexity of $\tfrac{1}{\lambda}h-f$, between $x_{i}$ and $x_{i+1}$ ($i=0,\hdots,k-1$) with weight $\mu_{i,i+1}=\tfrac{i}{k}$
    \[\tfrac{1}{\lambda}h(x_i)-f(x_i)\geq \tfrac{1}{\lambda}h(x_{i+1})-f(x_{i+1})+\inner{\tfrac{1}{\lambda}\nabla h(x_{i+1})-\nabla f(x_{i+1})}{x_i-x_{i+1}}.\]
\end{itemize}
The weighted sum is written as
\begin{equation*}
\begin{aligned}
0\geq &\sum_{i=0}^{k}\gamma_{*,i}\left[f(x_i)-f(u)+ \inner{ \nabla f(x_i)}{u-x_i}\right]\\
&+\sum_{i=0}^{k-1} \gamma_{i,i+1} \left[ f(x_{i+1})-f(x_i)+\inner{\nabla f(x_{i+1})}{x_i-x_{i+1}}\right]\\
&+ \mu_{*,k} \left[\tfrac{1}{\lambda}h(x_k)-f(x_k)-(\tfrac{1}{\lambda}h(u)-f(u))+\inner{\tfrac{1}{\lambda}\nabla h(x_k)-\nabla f(x_k)}{u-x_k} \right]\\
&+\sum_{i=0}^{k-1} \mu_{i+1,i} \left[\tfrac{1}{\lambda}h(x_i)-f(x_i)-( \tfrac{1}{\lambda}h(x_{i+1})-f(x_{i+1}))+\inner{\tfrac{1}{\lambda}\nabla h(x_i)-\nabla f(x_i)}{x_{i+1}-x_i}\right]\\
&+\sum_{i=0}^{k-1} \mu_{i,i+1} \left[\tfrac{1}{\lambda}h(x_{i+1})-f(x_{i+1})-(\tfrac{1}{\lambda}h(x_i)-f(x_i))+\inner{\tfrac{1}{\lambda}\nabla h(x_{i+1})-\nabla f(x_{i+1})}{x_i-x_{i+1}}\right].
\end{aligned}
\end{equation*}
By substitution of $\nabla h(x_{i+1})=\nabla h(x_{i})-\lambda \nabla f(x_{i})$ ($i=0,\hdots,k-1$), one can reformulate the weighted sum exactly as (i.e., there is no residual): 
\[0\geq f(x_k)-f(u)- \tfrac{h(u)-h(x_0)-\inner{\nabla h(x_0)}{u-x_0}}{\lambda k},\]
yielding the desired result. 
\qed

\paragraph{Lower bound through worst-case functions.} As \eqref{eq:pep_nolips} computes the \textit{exact} worst-case performance of NoLips, experiments above suggest that $1/(\lambda N)$ is also a lower bound, meaning that for every $\epsilon > 0$, there exist functions $(f,h) \in \mathcal{B}_L$ such that the iterates of NoLips satisfy 
\[f(x_N) - f_* \geq \frac{D_h(x_*,x_0)}{\lambda N} - \epsilon.\]
We detail here how such functions can be constructed from the solution of \eqref{eq:pep_over_sdp}. The numerical solver allows us to find a maximizer $\overline{G},\overline{F},\overline{H}$ (recall that only the relaxed problem has a maximizer as the feasible set is closed), and by factorizing the matrix $G$ as $P^T P$, we can thus recover the corresponding discrete representation $\{\overline{x}_i,\overline{g}_i,\overline{f}_i,\overline{h}_i,\overline{s}_i\}_{i \in I}$. This discretization can in turn be interpolated to get the corresponding functions $(\overline{f},\overline{h}) \in \blnonsmooth$. There are multiple ways to perform this interpolation; see \cite[Thm. 1]{Taylor2017} for a constructive approach.

Recall that since functions $(\overline{f},\overline{h})$ yield a  solution to \eqref{eq:pep_overestimation}, they belong to $\blnonsmooth$ and might thus form a \textit{pathological} nonsmooth limiting worst-case. They can be approached by valid instances $(f_\mu,h_\mu) \in \mathcal{B}_L$ by performing for instance smoothing through Moreau evelopes (as in Section \ref{ss:lower_bound}) and adding a small quadratic to $h$ to make it strictly convex.

There are however many possible maximizers of \eqref{eq:pep_over_sdp}. If we seek a low-dimensional example that may be easily interpretable, we can search for a maximizer such that the Gram matrix $G$ has minimal rank. Using rank minimization heuristics, we were able to find one-dimensional worst-case functions. Fix a number of iterations $N \geq 1$, assume $\lambda = 1/L = 1$ and define $\overf,\overh:\reals \rightarrow \reals$ as
\[
\begin{split}
    \overf(x) &= |x-1|,\\
    \overh(x) &= \overf(x) + \max(- N x,0),
\end{split}
\]
and set $\overx_0 = 0, \overx_* = 1$. Then clearly $(\overf,\overh) \in \blnonsmooth(\reals)$. Figure \ref{fig:worstcase1d} shows the functions $\overf,\overh$ as well as their smoothed versions $(f_\mu,h_\mu) \in \mathcal{B}_L(\reals)$. Note that the pathological behavior also reflects in the iterates: in the limiting instance, all iterates $\overx_0,\dots,\overx_N$ are equal. In the smoothed version, iterates are distinct (since $h_\mu$ is strictly convex), but they get closer and closer as the smoothing parameter $\mu$ goes to $0$.

The smoothed function $f_\mu$ is a Huber function, which is also the worst-case instance for Euclidean gradient descent on $L$-smooth functions described in \cite{Taylor2017}.
This analysis could be formalized to prove the $1/k$ lower bound for NoLips; however, this bound is just a particular case of the stronger result for general Bregman gradient methods derived in Section \ref{ss:lower_bound}.
% \begin{center}
	\begin{figure}[t]
	\centering
	        \pgfmathsetmacro{\Niter}{3.0}   % on définit des macros...
			\pgfmathsetmacro{\rangefm}{-0.8}% xmin du dessin
			\pgfmathsetmacro{\rangefp}{1.5} % xmax du dessin 
			\pgfmathsetmacro{\shift}{0.5}   % shift entre f et h 
			\pgfmathsetmacro{\smoothing}{0.12}   % smoothing parameter
			\pgfmathsetmacro{\strongconvexity}{0.1}   % strongconvexity parameter of h
            \newcommand{\colorh}{purple}    
            \pgfmathsetmacro{\figscale}{0.85}

			\begin{tikzpicture}[scale = \figscale]
			
			\begin{axis}[axis y line=none,
			             axis x line=bottom,
			            xtick={0,1},
	                    xticklabels={$\overx_0=\dots=\overx_3$,$\overx_*$},
	                    every tick/.style={very thick},
                      tick label style={font=\small}]
			
			\addplot [black, domain=\rangefm:\rangefp,samples=200, very thick] {max(x-1,-(x-1)} ;
			\addplot [\colorh, domain=\rangefm:\rangefp,samples=200, very thick] {\shift+max(x-1,-(x-1))+max(-\Niter*x,0)};
			\end{axis}
			\node[color=black, right, below] (f) at (1,2.8) {$\overf(x)$};
			\node[color=\colorh, right, above] (h) at (1,3.8+\shift) {$\overh(x)$};
			\draw [dotted] (2.37,0) -- (2.37,1.9);
			\draw [dotted] (5.37,0) -- (5.37,1);
			\end{tikzpicture}
			\hspace{4em}
			\begin{tikzpicture}[scale = \figscale]
			\begin{axis}[axis y line=none,
			             axis x line=bottom,
			            xtick={-0.48,-0.36,-0.24,-0.12,1},
	                    xticklabels={$x_0$,$x_1$,$x_2$,$x_3$,$x_*$},
	                    every tick/.style={very thick,},
                      tick label style={font=\small}]
			
			% smoothed objective : huber function
			 
			\addplot [black, domain=\rangefm:\rangefp,samples=200, very thick] {
			    (abs(x-1) < \smoothing) * (x-1)^2 / (2 * \smoothing) +
			    (abs(x-1) >= \smoothing) * (abs(x-1) - \smoothing / 2)
			} ;
			
			\addplot [\colorh, domain=\rangefm:\rangefp,samples=200, very thick] {
			    \shift+ (abs(x-1) < \smoothing) * (x-1)^2 / (2 * \smoothing) +
			    (x-1 >= \smoothing) * (abs(x-1) - \smoothing / 2) +
			    (x <= 1 -\smoothing) * (x >= -\smoothing) * (abs(x-1) - \smoothing / 2) +
			    (x <= -\smoothing * (\Niter + 1)) * (- (\Niter + 1) * x 
			        - 0.5 * \smoothing * (\Niter + 1)^2 + 1) +
			   (x >= -\smoothing * (\Niter + 1)) * (x <= -\smoothing) * (1 + 0.5 * x^2 / \smoothing) + \strongconvexity * x^2        
			} ;
			\end{axis}
			\node[color=black, right, below] (f) at (0.95,3.05) {$f_\mu(x)$};
			\node[color=\colorh, right, above] (h) at (1.05,3.62+\shift) {$h_\mu(x)$};
			\draw [dotted] (5.37,0) -- (5.37,1.2);
			\end{tikzpicture}
		\caption{ Worst-case functions for NoLips in dimension 1 with $N = 3$ iterations. The left figure shows the limiting instance $(\overf,\overh)\in \blnonsmooth(\reals)$, while the right plot represents the smooth approximation by a valid instance $(f_\mu,h_\mu)\in \bl(\reals)$, with smoothing parameter $\mu = 0.1$. As $\mu$ goes to 0, functions $f_\mu,h_\mu$ tend to a pathological behavior where all iterates are equal and for which we have exactly $\overf(\overx_N) - f_* = D_{\overh}(\overx_*,\overx_0) / N$.}\label{fig:worstcase1d}
	\end{figure}
% \end{center}
\subsubsection{Extension to other criteria}\label{sss:other_bound}

In our performance estimation problem, we focused on studying bounds of the form $f(x_N)-f_*\leq \theta(N,L,\lambda) D_h(x_*,x_0)$. However, we are not limited to this criterion, and different convergence measures might be considered by changing the objective and constraints in \eqref{eq:pep_nolips}. For instance, another popular criterion is the stationarity measure $D_h(x_k,x_{k+1})$, which boils down to the squared gradient norm in the unconstrained Euclidean case. By adapting \eqref{eq:pep_nolips}, we get the following new convergence result for NoLips.

\begin{proposition}[NoLips convergence rate, take II]\label{prop:nolips_bound2}
    Let $L > 0$, $C$ be a nonempty closed convex subset of $\reals^n$ and $(f,h) \in \mathcal{B}_L(C)$ a relatively-smooth problem instance. Then the sequence $\{x_k\}_{k \geq 0}$ generated by Algorithm \ref{algo:bpg} with constant step size $\lambda \in (0,1/L]$ satisfies for $k \geq 2$
    
    \begin{equation*}
        \min_{1\leq i\leq k} D_h(x_{i-1},x_i) \leq \frac{2D_{h}(x_*,x_0)}{k(k-1)}
    \end{equation*}
    for every $x_* \in \argmin_{C} f \cap \dom h$.
\end{proposition}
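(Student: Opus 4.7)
The plan is to combine a one-step ``strong descent'' inequality that controls $D_h(x_{i-1},x_i)$ by the function-value drop, with a weighted Abel summation against the classical Bregman telescoping used for Theorem~\ref{thm:nolips_bound}.

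\textbf{Step 1: strong descent.} For each $i\geq 1$, I would first establish
\[D_h(x_{i-1},x_i)\leq \lambda\bigl(f(x_{i-1})-f(x_i)\bigr).\]
The argument combines three ingredients: (i) the $h$-smoothness upper bound $f(x_i)\leq f(x_{i-1})+\la\nabla f(x_{i-1}),x_i-x_{i-1}\ra+L\,D_h(x_i,x_{i-1})$, together with $\lambda L\leq 1$ and $D_h(x_i,x_{i-1})\geq 0$; (ii) the first-order optimality condition of the NoLips step, i.e., $\nabla h(x_i)-\nabla h(x_{i-1})=-\lambda\nabla f(x_{i-1})$; and (iii) the three-point identity $\la\nabla h(x_i)-\nabla h(x_{i-1}),x_i-x_{i-1}\ra=D_h(x_{i-1},x_i)+D_h(x_i,x_{i-1})$. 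Substituting (ii)--(iii) into (i) cancels the $D_h(x_i,x_{i-1})$ term and collapses the descent bound to the announced inequality.

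\textbf{Step 2: summable function values.} From the standard analysis already underlying Theorem~\ref{thm:nolips_bound} (convexity of $f$, the $h$-smoothness descent lemma, and the three-point identity applied to $x_{i-1},x_i,x_*$) one obtains the classical one-step bound $\lambda(f(x_i)-f_*)\leq D_h(x_*,x_{i-1})-D_h(x_*,x_i)$, which after telescoping on $i=1,\dots,k-1$ yields $\lambda\sum_{i=1}^{k-1}(f(x_i)-f_*)\leq D_h(x_*,x_0)$.

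\textbf{Step 3: weighted Abel combination.} Writing $\delta_i:=f(x_i)-f_*$, I would multiply the strong descent inequality by the nonnegative weight $(i-1)$ and sum over $i=1,\dots,k$. Abel summation gives
\[\sum_{i=1}^{k}(i-1)D_h(x_{i-1},x_i)\leq \lambda\sum_{i=1}^{k}(i-1)(\delta_{i-1}-\delta_i)=\lambda\Bigl(\sum_{i=1}^{k-1}\delta_i-(k-1)\delta_k\Bigr)\leq D_h(x_*,x_0),\]
where the last inequality uses $\delta_k\geq 0$ and Step~2. Since the weights are nonnegative and $\sum_{i=1}^{k}(i-1)=k(k-1)/2$, the minimum is at most the weighted average, yielding $\min_{1\leq i\leq k} D_h(x_{i-1},x_i)\leq 2D_h(x_*,x_0)/(k(k-1))$ for $k\geq 2$.

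The only non-bookkeeping ingenuity is to pair the strong descent (which is summable at the sharp rate $\lambda$, not merely $1/L$) with the weights $(i-1)$ so that Abel summation telescopes the right-hand side to $\sum\delta_i$, which the standard analysis already controls by $D_h(x_*,x_0)/\lambda$. A naive unweighted average of the strong descent only yields an $O(1/k)$ rate, so the choice of weights is the crucial short idea; everything else is a direct computation.
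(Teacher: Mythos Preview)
Your proof is correct, but it proceeds along a genuinely different (and more classical) route than the paper's. The paper's argument is a single PEP-style dual certificate: it lists five families of basic inequalities (convexity of $f$, optimality of $x_*$, convexity of $\tfrac{1}{\lambda}h-f$ in two directions, and the trivial bound $D_h(x_{j-1},x_j)\geq\min_i D_h(x_{i-1},x_i)$), assigns each a specific weight found numerically, and verifies that the weighted sum collapses exactly to the desired inequality after substituting the NoLips update. There are no intermediate lemmas---just an algebraic identity.

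Your approach is modular: you first isolate a ``strong descent'' lemma $D_h(x_{i-1},x_i)\leq\lambda(f(x_{i-1})-f(x_i))$, then reuse the standard telescoping bound $\lambda\sum_{i=1}^{k-1}(f(x_i)-f_*)\leq D_h(x_*,x_0)$, and finally couple the two via Abel summation with weights $(i-1)$. This has the advantage of explaining \emph{why} the $O(1/k^2)$ rate appears: the function-value drops are summable at rate $O(1/k)$, and the strong descent converts each drop into a Bregman residual, so a linearly growing weight extracts the extra factor of $k$. The paper's certificate, by contrast, encodes essentially the same combination but hides the mechanism inside the choice of multipliers; it is shorter to verify but less transparent. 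One minor point worth making explicit in your write-up: the weight on $i=1$ is zero, so your weighted-average step actually bounds $\min_{2\leq i\leq k}D_h(x_{i-1},x_i)$, which of course dominates $\min_{1\leq i\leq k}D_h(x_{i-1},x_i)$.
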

\begin{proof} In the same way as before, the formal guarantee has been obtained by examining the dual of the corresponding PEP.
The  proof relies on the fact that $\tfrac{1}{\lambda}h-f$ is convex for any $\lambda\in (0,\tfrac1L]$, and only consists in performing the following weighted sum of inequalities:
\begin{itemize}
    \item convexity of $f$, between $x_*$ and $x_i$  ($i=0,\hdots,k$) with weights $\gamma_{*,i}=\tfrac{2\lambda}{k(k-1)}$:
    \[f(x_*)\geq f(x_i)+ \inner{ \nabla f(x_i)}{x_*-x_i},\]
    \item optimality of $x_*$ for each $x_k$ with weight $\gamma_{k,*}=\tfrac{2\lambda}{k-1}$:
    \[f(x_k)\geq f(x_*),\]
    \item convexity of $\tfrac{1}{\lambda}h-f$, between $x_*$ and $x_k$ with weight $\mu_{*,k}=\tfrac{2\lambda}{k(k-1)}$:
    \[\tfrac{1}{\lambda}h(x_*)-f(x_*)\geq \tfrac{1}{\lambda}h(x_k)-f(x_k)+\inner{\tfrac{1}{\lambda}\nabla h(x_k)-\nabla f(x_k)}{x_*-x_k},\]
    \item convexity of $\tfrac{1}{\lambda}h-f$, between $x_{i+1}$ and $x_{i}$ ($i=0,\hdots,k-1$) with weight $\mu_{i+1,i}=\tfrac{2\lambda(i+1)}{k(k-1)}$
    \[\tfrac{1}{\lambda}h(x_{i+1})-f(x_{i+1})\geq \tfrac{1}{\lambda}h(x_i)-f(x_i)+\inner{\tfrac{1}{\lambda}\nabla h(x_i)-\nabla f(x_i)}{x_{i+1}-x_i},\]
    \item definition of smallest residual among the iterates ($i=1,\hdots,k$) with weights $\tau_i=\tfrac{2(i-1)}{k(k-1)}$:
    \[ h(x_{i-1})-h(x_i)-\inner{\nabla h(x_i)}{x_{i-1}-x_i}\geq \min_{1\leq j \leq k} \{D_h(x_{j-1},x_j)\}.\]
\end{itemize}
The weighted sum is written as
\begin{equation*}
\begin{aligned}
0\geq & \sum_{i=0}^k \gamma_{*,i} [f(x_i)-f(x_*)+ \inner{ \nabla f(x_i)}{x_*-x_i}]\\
&+ \gamma_{k,*} [f(x_*)-f(x_k)]\\
&+ \mu_{*,k} [\tfrac{1}{\lambda}h(x_k)-f(x_k)-(\tfrac{1}{\lambda}h(x_*)-f(x_*))+\inner{\tfrac{1}{\lambda}\nabla h(x_k)-\nabla f(x_k)}{x_*-x_k}]\\
&+\sum_{i=0}^{k-1} \mu_{i+1,i} [\tfrac{1}{\lambda}h(x_i)-f(x_i)-(\tfrac{1}{\lambda}h(x_{i+1})-f(x_{i+1}))+\inner{\tfrac{1}{\lambda}\nabla h(x_i)-\nabla f(x_i)}{x_{i+1}-x_i}]\\
&+ \sum_{i=1}^k \tau_i [\min_{1\leq j \leq k} \{D_h(x_{j-1},x_j)\} -( h(x_{i-1})-h(x_i)-\inner{\nabla h(x_i)}{x_{i-1}-x_i})].
\end{aligned}
\end{equation*}
By substitution of $\nabla h(x_{i+1})=\nabla h(x_{i})-\lambda \nabla f(x_{i})$ ($i=0,\hdots,k-1$), one can reformulate the weighted sum exactly as (i.e., there is no residual): 
\[0\geq \min_{1\leq j \leq k} \{D_h(x_{j-1},x_j)\} - 2\tfrac{h(x_*)-h(x_0)-\inner{\nabla h(x_0)}{x_*-x_0}}{ k(k-1)},\]
yielding the desired result.\qed
\end{proof}

\subsubsection{Beyond NoLips: inertial Bregman algorithms}\label{ss:iga}

\begin{figure}[t]
% 		\begin{center}
        \centering
			\begin{tikzpicture}
			\begin{loglogaxis}[legend pos=south west, legend style={draw=none, inner sep = 0pt},legend cell align={left}, plotOptions,  ymin=0.0001, ymax=10,xmin=1,xmax=100,width=.4\linewidth, 
	       width=0.5\textwidth,height = 0.4\textwidth
			]
			\addplot [colorP1] table [x=k,y=WC_FUNC]  {Data/NoLips_2.dat};
% 			\addplot [colorP2] table [x=k,y=WC_GRAD]  {Data/NoLips.dat};
			\addplot [colorP3,dashed] table [x=k,y=WC_MINGRAD] {Data/NoLips_2.dat};
			\addlegendentry{$({f(x_k)-f(x_*)})/{D_h(x_*,x_0)}$}
% 			\addlegendentry{${D_h(x_{k-1},x_k)}/{D_h(x_*,x_0)}$}
			\addlegendentry{$\min_{1\leq i\leq k} \{D_h(x_{i-1},x_i)\}/{D_h(x_*,x_0)}$}
			\end{loglogaxis}
			\end{tikzpicture}
			\hspace{1pt}
			\begin{tikzpicture}
			\begin{loglogaxis}[legend pos=south west, legend style={draw=none,inner sep = 0pt},legend cell align={left}, plotOptions,  ymin=0.0001, ymax=10,xmin=1,xmax=100,width=.4\linewidth, 
	       width=0.5\textwidth,height = 0.4\textwidth
			]
			\addplot [colorP2,dashed] table [x=N,y=theory]  {Data/IGA.dat};
			\addplot [colorP1] table [x=N,y=pep_iga] {Data/IGA.dat};
			\addlegendentry{$4\tilde{L}/(\sigma k^2) \, \text{(theory)}$}
% 			\addlegendentry{${D_h(x_{k-1},x_k)}/{D_h(x_*,x_0)}$}
			\addlegendentry{$({f(x_k)-f(x_*)})/{(D_h(x_*,x_0)+f(x_0)-f_*)}$}
			\end{loglogaxis}
			\end{tikzpicture}
						
 			\caption{ Numerical worst-case guarantees obtained from PEPs as functions of the iteration counter $k$ (shown in log scale as rates are sublinear). \textbf{Left:} guarantees for NoLips (Algorithm \ref{algo:bpg}) for two different convergence measures. Numerical values confirm exactly the theoretical rates of Theorem \ref{thm:nolips_bound} and Proposition \ref{prop:nolips_bound2}. \textbf{Right:} guarantees for IGA with no affine constraints (Algorithm \ref{algo:iga}) under the assumption that $h$ is $1$-strongly convex and $f$ is 1-smooth, compared to the theoretical bound from \cite{Auslender2006}. Notice that the theoretical bound is not tight in this case, as it is obtained by making some approximations in the proof.}\label{fig:pep_nolips}
% 		\end{center}
\end{figure}

%\paragraph{} 
Our approach is not limited to the NoLips algorithm. For instance, we can also solve the performance estimation problem for the inertial Bregman algorithm proposed by Auslender and Teboulle \cite{Auslender2006}, a.k.a. the Improved Interior Gradient Algorithm (IGA). We recall its simplified formulation in Algorithm \ref{algo:iga}, in the case where there are no affine constraints.

\begin{algorithm}[H]
{\normalsize
	\begin{algorithmic}
	\begin{spacing}{1.4}
		\STATE \textbf{Input:} Functions $f,h$, initial point $x_0 \in \interior \dom h$, step size $\lambda$.\\
		\STATE Set $z_0 = x_0$ and $t_0 = 1$.
		\FOR{k = 0,1,\dots}
		\STATE $y_k = (1-\frac{1}{t_k}) x_k + \frac{1}{t_k} z_k$
		\STATE 
		    $
		    z_{k+1} = \argmin \, \{\la \nabla f(y_k), u-y_k \ra + \frac{1}{t_k \lambda}D_h(u,z_k) \,|\, u \in \reals^n\}  $
		\STATE $x_{k+1} = (1 - \frac{1}{t_k}) x_k + \frac{1}{t_k} z_{k+1}$
		\STATE $t_{k+1} = (1 + \sqrt{1 + 4t_k^2})/2$.
		\ENDFOR \vspace{-3mm}
		\end{spacing}
	\end{algorithmic}
	\caption{Improved Interior Gradient Algorithm (IGA) \cite{Auslender2006}}
	\label{algo:iga}
}
\end{algorithm}

In the setting where $f$ has $\tilde{L}$-Lipschitz continuous gradients and $h$ is a $\sigma$-strongly convex kernel function, IGA with step size $\lambda = \sigma/\tilde{L}$ enjoys the following convergence rate \cite[Thm. 5.2]{Auslender2006}:
\begin{equation}\label{eq:iga_bound}
    f(x_N) - f_* \leq \frac{4 \tilde{L}}{\sigma N^2} \left(D_h(x_*,x_0) + f(x_0) - f_* \right).
\end{equation}
Our PEP framework can also be applied to this algorithm, in order to find the smallest value of $\theta(N,\tilde{L},\sigma,\lambda)$ which satisfies
\[f(x_N) - f_* \leq \theta(N,\tilde{L},\sigma,\lambda) \left(D_h(x_*,x_0) + f(x_0) - f_* \right) \] 
for every instance of IGA with the supplementary assumptions made above. In this case, we use the standard interpolation conditions of Theorem \ref{thm:interp_mul} for $L$-smooth and strongly convex functions.
Results are shown in Figure~\ref{fig:pep_nolips}. The exact numerical worst-case performance of IGA is slightly below the theoretical bound above, since the proof in \cite{Auslender2006} makes some approximations.

% \textit{IGA in the general relatively-smooth case: failure of inertia.} We pointed out in Section \ref{s:setup} that the setting in which $f$ is $\tilde{L}$-smooth and $h$ is $\sigma$-strongly convex is a particular case of $h$-smoothness with constant $L = \tilde{L}/\sigma$. The natural question that was also raised in \cite[Section 6]{Teboulle2018} is therefore: does IGA converge for the general class $\mathcal{B}_L(C)$ ? Solving the corresponding PEP yields the following results. For Algorithm \ref{algo:iga} with the setting that $(f,h) \in \mathcal{B}_L(C)$ and several choices of step size in $(0,1/L]$, the solver states the value of the corresponding performance estimation problem \textbf{is unbounded}, i.e., there does not exist any $\theta$ such that the bound~\eqref{eq:iga_bound} holds for every instance $(f,h) \in \bl$. Of course, this constitutes numerical evidence and not a formal proof. Nonetheless, due to the tightness result of Theorem \ref{thm:equiv}, there are strong reasons to conjecture that IGA indeed does not converge in the general $h$-smooth setting. 
 
\paragraph{IGA in the general relatively-smooth case: failure of inertia.} We pointed out in Section \ref{s:setup} that the setting in which $f$ is $\tilde{L}$-smooth and $h$ is $\sigma$-strongly convex is a particular case of relative smoothness with constant $L = \tilde{L}/\sigma$. The natural question that was also raised in \cite[Section 6]{Teboulle2018} is therefore: does IGA converge for the general class $\mathcal{B}_L(C)$ ? Solving the corresponding PEP yields the following results. For Algorithm \ref{algo:iga} with the setting that $(f,h) \in \mathcal{B}_L(C)$ and several choices of step size in $(0,1/L]$, the solver states that the value of the corresponding performance estimation problem \textbf{is unbounded}, i.e., there does not exist any $\theta$ such that the bound~\eqref{eq:iga_bound} holds for every instance $(f,h) \in \bl$.

\modif{
One could legitimately wonder whether there exist other sequences $\{t_k\}_{k\geq 0}$ with $t_k > 1$, perhaps \textit{less aggressive} than the one in Algorithm \ref{algo:iga}, such that the method converges (note that choosing $t_k = 1 \,\,\, \forall k \geq 0$ would yield the standard NoLips scheme). After solving the PEP with several choices of such sequences and observing that it is unbounded, we formulate the following conjecture: for \textit{any} sequence $\{t_k \}_{k \geq 0}$, in IGA, such that $t_{k_0} > 1$ for some $k_0$, it is not possible to bound $f(x_N)-f_*$ in general. Of course, this constitutes numerical evidence and not a formal proof. The conjecture could be proved by constructing worst-case functions in the same spirit as in Section \ref{s:complexity}, with some pathological lack of smoothness that would cause the iterates to diverge when taking a step size larger than $1/L$.

These experiments lead us to believe that inertial methods with non-adaptive coefficients fail to converge in the general relatively-smooth setting.
}

\subsubsection{From worst-case functions for NoLips to a lower bound for general Bregman methods}

We briefly explain how, with the PEP methodology, the worst case functions from Section \ref{ss:lower_bound} were discovered. 

We described in Section \ref{sss:nolips_pep} how a one-dimensional worst-case instance $(\overf,\overh)$ for NoLips was discovered from low-rank solutions of \eqref{eq:pep_over_sdp}. However, this instance may not be difficult enough for a more generic Bregman algorithm that can use abritrary linear combinations of gradients (as in Definition \ref{def:algorithm}, our definition of the \textit{Bregman gradient algorithm}), and thus cannot be used to prove a general lower bound.

Our objective now is to find worst-case instances that are difficult for \textbf{any} Bregman gradient algorithm. A desirable property would be that these instances allow to explore only \textit{one dimension} per oracle call, so that the function \textit{hides information} in the unexplored dimensions. This is  similar in spirit to the so-called ``worst function in the world" of Nesterov \cite{Nesterov2004}. In order to achieve this goal, we propose to search for functions $f$ for which all gradients $\nabla f(x_i)$ are orthogonal, guaranteeing that one new dimension is explored at each step. Note that a similar approach has been used in some previous work on PEPs to find lower bounds or optimal methods e.g., in \cite{Drori2017,Drori2019a}. This amounts to adding some orthogonality constraints to \eqref{eq:pep_nolips} and solving
\begin{equation}\label{eq:pep_nolips_orth}\tag{PEP-orth}
\BA{ll}
       \mbox{maximize} & \big(f\left(x_N\right) - f \left(x_*\right)\big) / D_{h}(x_*,x_0)\\
      \rule{0pt}{12pt}
        \mbox{subject to   } &(f,h) \in \mathcal{B}_L(\reals^n), \\
        % & Lh - f \mbox{ is convex,} \\
        & x_* \mbox{ is a minimizer of $f$}, \\
        & x_1,\dots,x_N \mbox{ are generated from $x_0$ by Algorithm \ref{algo:bpg} with step size $\lambda$,}\\
        & \la \nabla f(x_i), \nabla f(x_j) \ra = 0 \mbox{ for } i\neq j \in I,
\EA
\end{equation}
in the variables $f,h,x_0,\dots,x_N,x_*,n$.
% \begin{equation*}
%     \begin{split}
%         x_0 = (1,1),f_0 &= 1,\, g_0 = (1,0),\, h_0 = 1, \, s_0 = (1,1),\\
%         x_1 = (0,1),f_1 &= 1,\, g_1 = (0,1),\, h_1 = 1, \, s_0 = (0,1),\\
%         x_* = (0,0),f_* &= 0,\, g_* = (0,0),\, h_* = 0, \, s_* = (0,0).\\
%     \end{split}
% \end{equation*}

% From this discrete representation, convex functions $f,h$ can be interpolated. However, these functions will be \textbf{nonsmooth} ($g_i,s_i$ are in that case subgradients of $f,h$) and therefore not valid instances of NoLips. This is because the set $\mathcal{B}_L$ of relatively-smooth problems is an open set, and the maximizing sequence of \eqref{eq:pep_nolips} actually approaches limiting nonsmooth functions $(\overline{f}, \overline{h})$.
% From the values above, such limiting functions can be deduced for $N = 2$, indeed define for instance
In the same spirit as before, we were able to find a dimension-$N$ solution of \eqref{eq:pep_nolips_orth}. This allows us to interpolate the following worst-case pathological instance:
\begin{equation*}
    \begin{split}
        \overline{f}(x) &= \|x-(1,\dots, 1)\|_\infty,\\
        \overline{h}(x) &= \overline{f}(x) + \sum_{i=2}^N \max(-x^{(i)},0).
    \end{split}
\end{equation*}
Again, these are nonsmooth functions and, as such, they do not form valid instances for NoLips. However, they can be approached by a sequence of such functions, for instance by applying smoothing with the Moreau evelope, and adding a small quadratic term to make $h$ strictly convex. Along with a few tweaks, this is how we found the example that was used to prove the general lower bound for $\mathcal{B}_L$ in Section \ref{ss:lower_bound}.

\section{Conclusion}\label{s:conclusion}

%  - methodology easily applicable to other Bregman algorithms
%  - If the domain is open, complexity results are out of reach (cf counter example), and if it is open, it can be reduced to studying functions with full domain 
%  - Worst case behavior is a non-differentiable limit
%  - In order to get acceleration, need for additional assumptions, weaker than L-smoothness

Our paper has two main contributions: proving optimality of NoLips for the general relatively-smooth setting, and developing numerical performance estimation techniques for Bregman gradient algorithms.
We presented the performance estimation problem on the basic NoLips algorithm for simplicity, but our approach can be applied to different settings and various algorithms involving Bregman distances. We provided several applications illustrating how the PEP methodology is an efficient tool for conjecturing and analyzing the worst-case behavior of Bregman algorithms.

There is a fundamental concept linking the two parts of the paper, which is that of \textit{limiting nonsmooth pathological behavior}. 
When looking for worst-case guarantees over a class of functions that is open such as the class of differentiable convex functions, the performance estimation problem is a \emph{supremum} and the worst-case maximizing \textit{sequence} might approach some function that is not in this class, e.g., one that is nonsmooth in our case. This idea, observed by analyzing the equivalence between \eqref{eq:pep_nolips} and the nonsmooth relaxation \eqref{eq:pep_overestimation}, was used in the proof of the lower bound in Section \ref{ss:lower_bound}. Moreover, the worst-case sequence of functions was directly inspired by examining particular solutions of \eqref{eq:pep_overestimation}. 

Our result also shows that additional assumptions on functions $f$ and $h$ are needed in order to prove better bounds or devise faster algorithms than NoLips. If the usual properties of $L$-smoothness and strong convexity are too restrictive and do not hold in many applications, the future challenge is to find weaker assumptions, that define a larger class of functions where improved rates can be obtained. One other possible approach would be to find algorithms that do not fit in Definition \ref{def:algorithm}, for instance by including second-order oracles of $h$, in the case when  $h$ is simple enough.

% \newpage
% \begin{acknowledgements}
\bigskip
\bigskip
\bigskip
% {\small

\textbf{Code.} Experiments have been run in MATLAB, using the semidefinite solver MOSEK \cite{mosek} as well as the modeling toolbox YALMIP \cite{yalmip}. The support for Bregman methods has been added to the Performance Estimation Toolbox (PESTO, \cite{Taylor2018}) for which we provide some examples.
The code can be downloaded from
{\small \url{https://github.com/RaduAlexandruDragomir/BregmanPerformanceEstimation}}.

\bigskip

\textbf{Acknowledgements.} The authors would like to thank the anonymous reviewers for constructive suggestions as well as Dmitrii Ostrovskii and Edouard Pauwels for useful comments. RD acknowledges support from an AMX fellowship. AT acknowledges support from the European Research Council (grant SEQUOIA 724063).
AA is at CNRS, and CS Department, Ecole Normale Sup\'erieure, PSL Research University, 45 rue d'Ulm, 75005, Paris. AA would like to acknowledge support from the {\em ML and Optimisation} joint research initiative with the {\em fonds AXA pour la recherche} and Kamet Ventures, a Google focused award, as well as funding by the French government under management of Agence Nationale de la Recherche as part of the "Investissements d'avenir" program, reference ANR-19-P3IA-0001 (PRAIRIE 3IA Institute). JB acknowledges the support of ANR-3IA ANITI, ANR Chess, Air Force Office of Scientific Research, Air Force Material Command, USAF, under grant numbers FA9550-19-1-7026, FA9550-18-1-0226. JB acknowledges financial support of the research foundation TSE-Partnership.
% \end{acknowledgements}

% }

% \newpage

% Authors must disclose all relationships or interests that 
% could have direct or potential influence or impart bias on 
% the work: 
%
% \section*{Conflict of interest}
%
% The authors declare that they have no conflict of interest.

 \appendix
% \section{Proof of Theorem \ref{thm:nolips_bound}} \label{a:proof_bound_nolips}

% \section{Proof of Proposition \ref{prop:nolips_bound2}} \label{a:proof_bound_nolips2}

\section{Extension of performance analysis to the case when $C$ is a general closed convex subset of $\reals^n$}\label{a:domh}
For simplicity of the presentation, we left out in Section \ref{s:pep} the case when  the domain $C$ is a proper subset of $\reals^n$. We show in this section that it actually corresponds to the same minimization problem \eqref{eq:pep_over_sdp}.

Let us formulate the performance estimation problem for Algorithm \ref{algo:bpg} in the general case. Recall that we denote $\mathcal{B}_L$ the union of $\mathcal{B}_L(C)$ for all closed convex subsets of $\reals^n$ and  for every $n \geq 1$. The performance estimation problem writes
\begin{equation}\label{eq:pep_nolips_c}\tag{PEP-C}
\BA{ll}
       \mbox{maximize} & \big(f\left(x_N\right) - f \left(x_*\right)\big) / D_{h}(x_*,x_0)\\
       \rule{0pt}{12pt}
        \mbox{subject to   } &(f,h) \in \mathcal{B}_L, \\
        % & Lh - f \mbox{ is convex,} \\
        & x_* \mbox{ is a minimizer of $f$ on } \overline{\dom h} \mbox{ such that } x \in \dom h, \\
        & x_1,\dots,x_N \mbox{ are generated from $x_0$ by Algorithm \ref{algo:bpg} with step size $\lambda$,}\\
\EA
\end{equation}
in the variables $f,h,x_0,\dots,x_N,x_*,n$.
Now, as \eqref{eq:pep_nolips_c} is a problem that includes \eqref{eq:pep_nolips} in the special case where $C = \reals^n$, its value is larger:
\[ \mbox{val\eqref{eq:pep_nolips}} \leq  \mbox{val\eqref{eq:pep_nolips_c}} 
\]
Let us show that val\eqref{eq:pep_nolips_c} is upper bounded by the same relaxation val\eqref{eq:pep_overestimation}, which allows to conclude that the values are equal.
We recall that the problem \eqref{eq:pep_overestimation} can be written, using interpolation conditions of Corollary \ref{cor:blnonsmooth}, as
\begin{equation}\tag{$\overline{\text{PEP}}$}
\BA{rcl}
        &\mbox{maximize} & f_N - f_* \\
        \rule{0pt}{11pt}
        &\mbox{subject to} &  f_i - f_j - \la g_j, x_i - x_j \ra \geq 0,\\
        && (Lh_i - f_i) - (Lh_j - f_j) - \la Ls_j - g_j,x_i - x_j \ra \geq 0 \quad \mbox{ for } i,j \in I,\\
        &            & g_* = 0,\\
        &            & s_{i+1} = s_i - \lambda g_i \quad \mbox{for } i\in \{1,\dots, N-1\},\\
        &            &  h_* - h_0  - \la s_0, x_* - x_0 \ra = 1,\\
\EA
\end{equation}
in the variables $n,\{(x_i,f_i,g_i,h_i,s_i)\}_{i \in I}$. 
We show that every admissible point of \eqref{eq:pep_nolips_c} can be cast into an admissible point of \eqref{eq:pep_over_sdp}. This actually amounts to show that, from the point of view of performance estimation, an instance $(f,h) \in \bl(C)$ is actually equivalent to some instance in $\bl(\reals^n)$. 

Let $f,h,x_0,\dots,x_N,x_*$ be a feasible point of \eqref{eq:pep_nolips_c}. We distinguish two cases.

\paragraph{Case 1: \footnotesize{$x_* \in \interior \dom h$}.} This is the simplest case, as the necessary conditions are the same as in the situation where $C = \reals^n$. Indeed, then we have $x_0,\dots,x_N,x_* \in \interior \dom h$, since $x_0$ is constrained to be in the interior and the next iterates are in $\interior \dom h$ by Assumption \ref{assumption_bpg}. Since $f$ and $h$ are differentiable on $\interior \dom h$, convexity of $f$ and $Lh-f$ imply that the first two constraints of \eqref{eq:pep_overestimation} hold for all $i,j \in I$. Finally, $g_* = 0$ follows from the fact that $x_*$ minimizes $f$ and that it lies on the interior of the domain. Hence the discrete representation satisfies the constraints of \eqref{eq:pep_over_sdp}.

\paragraph{Case 2: \footnotesize{$ x_* \in \partial \dom h$}.}
 In this case, $f$ and $h$ are not necessarily differentiable at $x_*$, but are still differentiable still at $x_0,\dots,x_N$ for the same reasons. But we can still, with a small modification at $x_*$, derive a discrete representation that fits the constraints of \eqref{eq:pep_overestimation} and whose objective is the same. Indeed, define

\begin{equation*}
    \begin{split}
        (g_i,f_i,s_i,h_i) &= \left(\nabla f(x_i), f(x_i), \nabla h(x_i), h(x_i)\right) \mbox{     for } i = 0,\dots, N,\\
        (g_*,f_*,s_*,h_*) &= \left(0,f(x_*),v,h(x_*)\right),
    \end{split}
\end{equation*}
where $v \in \reals^n$ is a vector that are specified later.
Then, for $i \in I$ and $j \in \{0\dots N\}$, convexity of $f$ and $Lh-f$ imply that the constraints 
\[ \begin{split}
f_i - f_j - \la g_j, x_i- x_j \ra \geq 0\\
 (Lh_i-f_i) - (Lh_j-f_j) - \la Ls_j - g_j, x_i- x_j \ra \geq 0
 \end{split}\]
hold. It remains to verify them for $i \in \{0 \dots N\}$ and $j = *$. The first one holds because $x_*$ minimizes $f$ on $\dom h$, so with $g_* = 0$ we have $f_i - f_* \geq 0$. We now show that the second one is satisfied, i.e., that we can choose $v \in \reals^n$ so that 
\[(Lh_i-f_i) - (Lh_*-f_*) - \la Lv, x_i- x_* \ra \geq 0 \quad \forall i \in \{0\dots N\}.\]

To this extent, we use the fact that $x_* \in \partial \dom h$ and that $x_i \in \interior \dom h$ for $i = 0 \dots N$. This means that $\{x_*\} \cap \interior \dom h = \emptyset$, and therefore by the hyperplane separation theorem \cite[Thm 11.3]{Rockafellar2008}, there exists a hyperplane that separates the convex sets $\{x_*\}$ and $\interior \dom h$ \textit{properly}, meaning that there exists a vector $u \in \reals^n$ such that
\[\la x_i - x_*, u \ra < 0 \,\,\, \forall i \in \{0, \dots, N\}.\]
Set 
\[\begin{split} \alpha &= \min_{i =0\dots N}\, (L h_i - f_i) - (Lh_* - f_*),\\
\beta &= \min_{i = 0,\dots, N} - \la x_i - x_*, u \ra > 0,\end{split} \]

where $\beta > 0$ because of the separation result. Choose $s_* = v$ as $v = \frac{|\alpha|}{L \beta} u$.
Then we have
\[ \begin{split}
    (Lh_i - f_i) - (Lh_* - f_*) - \la L s_*,x_i - x_* \ra &\geq \alpha + L \frac{|\alpha|}{L \beta} \beta \\
    & \geq \alpha + |\alpha| \\& \geq 0.
\end{split}
\]
This eventually provides an instance $\{(x_i,g_i,f_i,h_i,s_i)\}_{i \in I}$ that is admissible for \eqref{eq:pep_overestimation}.

To conclude, we proved that in both cases, an admissible point of \eqref{eq:pep_nolips_c} can be turned into an admissible point of \eqref{eq:pep_over_sdp} with the same objective value. Hence we have
\[\mbox{val\eqref{eq:pep_nolips_c}} \leq \mbox{val\eqref{eq:pep_over_sdp}} .\]
Recalling that $\mbox{val\eqref{eq:pep_nolips}} \leq \mbox{val\eqref{eq:pep_nolips_c}}$ and that $\mbox{val\eqref{eq:pep_over_sdp}} = \mbox{val\eqref{eq:pep_nolips}}$ by Theorem \ref{thm:equiv}, we get
\[ \mbox{val\eqref{eq:pep_nolips_c}} = \mbox{val\eqref{eq:pep_nolips}}.\]
In other words, solving the performance estimation problem \eqref{eq:pep_nolips_c} for functions with any closed convex domain is equivalent to solving the performance estimation problem \eqref{eq:pep_nolips} restricted to functions that have full domain.

\normalsize
% BibTeX users please use one of
% \bibliographystyle{spbasic}      % basic style, author-year citations
\bibliographystyle{spmpsci}      % mathematics and physical sciences
\bibliography{library}   % name your BibTeX data base

\end{document}